\documentclass{amsart}

\usepackage[T1]{fontenc}
\usepackage{amssymb}

\usepackage{graphicx}
\usepackage{booktabs}
\usepackage{longtable}
\usepackage{multicol}
\usepackage[ruled,vlined]{algorithm2e}
\usepackage{float}

\usepackage[hidelinks]{hyperref}

\newtheorem{thm}{Theorem}[section]
\newtheorem{lemma}[thm]{Lemma}
\newtheorem{prop}[thm]{Proposition}
\newtheorem{conjecture}[thm]{Conjecture}

\theoremstyle{definition}
\newtheorem{question}[thm]{Question}
\newtheorem{definition}[thm]{Definition}

\theoremstyle{remark}
\newtheorem{remark}[thm]{Remark}
\newtheorem{fact}[thm]{Fact}
\numberwithin{equation}{section}
\newcommand{\tablegap}{\par\vspace{0.2cm}}

\title[Pontryagin-surface boundaries]{Discrete embeddings of hyperbolic
groups with Pontryagin-surface boundaries}

\author{Jiming Ma}
\address{School of Mathematical Sciences, Fudan University, Shanghai, 200433, P. R. China}
\email{majiming@fudan.edu.cn}

\author{Junseo Yoon}
\address{Department of Mathematical Sciences, College of Natural Sciences,
Seoul National University, 1 Gwanak-ro, Gwanak-gu, Seoul 08826,
Republic of Korea}
\email{pi\_lovelove2@snu.ac.kr}

\author{Fangting Zheng}
\address{Department of Mathematical Sciences, Xi'an Jiaotong-Liverpool
University,\newline Suzhou 215123, China}
\email{Fangting.Zheng@xjtlu.edu.cn}

\keywords{Real hyperbolic geometry, hyperbolic groups, Pontryagin surfaces,
limit sets}
\subjclass[2020]{20F55, 20H10, 57M60, 22E40, 51M10}
\date{August 13, 2026}
\thanks{Jiming Ma was supported by NSFC No. 12171092. Fangting Zheng was supported by NSFC No. 12471067.}

\begin{document}

\begin{abstract}
Let \(X_p\) be the quotient of the closed disk obtained by identifying
boundary points under rotation through angle \(2\pi/p\).  For every
\(2\leq p\leq8\), we construct a hyperbolic right-angled Coxeter group
with nerve homeomorphic to \(X_p\) that admits a discrete, faithful,
convex cocompact reflection representation into
\(\operatorname{Isom}(\mathbf H^5)\), whose limit set is homeomorphic to
the index-\(p\) Pontryagin surface \(\Pi_p\).  Dimension five is optimal,
since \(\Pi_p\) does not embed in \(\mathbb S^3\).  For \(p=2,3\), the
constructions are analytic and yield cyclically symmetric infinite
families.  %Rigorous interval certification gives asymmetric realizations
%throughout the stated range, while a collar construction gives an
%additional symmetric realization for \(p=4\).  We also construct
%non-right-angled Coxeter reflection representations for \(p=2,3,4\).
\end{abstract}

\maketitle
%\tableofcontents
\section{Introduction}\label{sec:intro}

Nearly a century ago, Pontryagin \cite{Pont:1930} ingeniously constructed, for each
prime \(p\), a compactum \(\Pi_p\) now known as a Pontryagin surface.
These spaces exhibit a striking dimension defect: each \(\Pi_p\) is
two-dimensional, whereas \(\Pi_p\times\Pi_q\) is three-dimensional for
distinct primes \(p\) and \(q\).

We recall the construction briefly; see
\cite{APon:1990,Dran:2011}.  Let \(A\subset\mathbb C\) be the annulus
bounded by the circles \(S_1\) and \(S_2\) of radii \(1\) and \(2\),
respectively.  On \(S_1\), impose the equivalence relation induced by the
degree-\(p\) map \(f_p(z)=z^p\), and let \(M_p\) be the resulting quotient
of \(A\).  This space is called the mod-\(p\) M\"obius band; for \(p=2\),
it is the usual M\"obius band.  Let \(K_p\subset M_p\) denote the image
of \(S_1\).

Let \(Y\) be a triangulated 2-sphere.  In the interior of every
two-simplex \(\Delta\), remove a small open disk and attach a copy of
\(M_p\) along its outer boundary \(S_2\).  Denote the resulting space by
\(M_p(Y)\), and write \(M_p(\Delta)\) for the portion lying over
\(\Delta\).  There is a natural map
\(f^1_0\colon Y_1=M_p(Y)\to Y_0=Y\) whose restriction to each
\(M_p(\Delta)\) fixes \(\partial\Delta\), collapses \(K_p\) to the
barycenter of \(\Delta\), and maps
\(M_p(\Delta)\setminus K_p\) homeomorphically onto the punctured
simplex.  Iterating this operation produces an inverse system
\[
Y_0\xleftarrow{f^1_0}Y_1\xleftarrow{f^2_1}Y_2\xleftarrow{}\cdots .
\]
Its inverse limit is the index-\(p\) Pontryagin surface \(\Pi_p\), whose
homeomorphism type is independent of the auxiliary triangulations.

Although Pontryagin surfaces are traditionally indexed by primes, we use
the same degree-\(p\) inverse-limit construction and the notation
\(\Pi_p\) for every integer \(p\geq2\).  The complexes and Coxeter groups
considered below are likewise defined for arbitrary integers \(p\geq2\);
see Remark~\ref{remark:defP}.

One basic question in geometric group theory is the following.
\begin{question}\label{ques:boundary}
Which topological spaces occur as Gromov boundaries of hyperbolic groups
\cite{KapovichB:2002}?
\end{question}
Beyond spheres, relatively few explicit examples are known.  For
instance, the Menger compactum \(\mu^n\) occurs in dimensions
\(n=0,1,2,3\) \cite{DOsajda:1997,KapovichB:2002}, and certain trees of
manifolds also arise as Gromov boundaries
\cite{PrzytyckiS:2009,Swiatkowski:2009}.  Dranishnikov constructed
hyperbolic Coxeter groups with Pontryagin-surface boundaries
\cite{Dran:1997,Dran:1999}.

A related realization problem in hyperbolic geometry asks:
\begin{question}\label{ques:limitset}
Which topological spaces occur as limit sets of geometrically finite
higher-dimensional Kleinian groups?
\end{question}

A particularly important case is to determine whether a given hyperbolic
group \(G\) admits a discrete, faithful representation into
\(\operatorname{Isom}(\mathbf H^k)\), with \(k\) as small as possible.
This problem remains difficult in general; see, for example,
\cite{Kapovich:2005}.

For \(p\geq2\), let \(X_p\) be the quotient of the closed unit disk
\(\mathbb D^2\) by the boundary identifications
\[
x\sim e^{2\pi i j/p}x,
\qquad x\in\partial\mathbb D^2,\quad 0\leq j\leq p-1.
\]
Thus \(X_p\) is the Moore space \(M(\mathbb Z/p\mathbb Z,1)\), and
\(X_2\cong\mathbb{RP}^2\).  Our main result is the following.

\begin{thm}\label{thm:pontryaginsmallp}
For each \(p\in\{2,3,4,5,6,7,8\}\), there is a hyperbolic right-angled Coxeter
group \(G_p\), whose nerve is a triangulation of \(X_p\), admitting a
discrete, faithful, convex cocompact representation
\[
\rho_p\colon G_p\longrightarrow\operatorname{Isom}(\mathbf H^5).
\]
If \(\Gamma_p=\rho_p(G_p)\), then the limit set of \(\Gamma_p\) is
homeomorphic to the Pontryagin surface \(\Pi_p\).
\end{thm}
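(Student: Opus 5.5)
The proof has two largely independent parts: first fix the group combinatorially, then realize it geometrically.

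\emph{Combinatorial part.} For each \(p\) we choose an explicit flag triangulation \(L_p\) of \(X_p\) with no induced square (no empty 4-cycle). Let \(G_p\) be the right-angled Coxeter group with nerve \(L_p\). By Moussong's criterion, \(G_p\) is word-hyperbolic exactly when \(L_p\) is flag with no induced 4-cycles, so we will check that property. Dranishnikov's theorem \cite{Dran:1997,Dran:1999} then identifies the Gromov boundary: for a hyperbolic right-angled Coxeter group whose nerve is a triangulation of the Moore space \(M(\mathbb Z/p,1)=X_p\), the boundary \(\partial G_p\) is homeomorphic to \(\Pi_p\). The key inputs are that the Davis complex is locally a cone on \(L_p\), and that links of vertices are circles while the only nontrivial local homology is \(p\)-torsion. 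The boundary is built as an inverse limit of copies of \(Y\) with mod-\(p\) Möbius bands glued in, matching the construction recalled above. We expect to cite this rather than reprove it. Once a convex cocompact representation exists, the orbit map is a quasi-isometric embedding, so the limit set is equivariantly homeomorphic to \(\partial G_p\cong\Pi_p\).

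\emph{Geometric part.} We realize \(G_p\) as a reflection group in \(\mathbf H^5\). To each vertex \(v\) of \(L_p\) we assign a spacelike unit vector \(e_v\in\mathbb R^{5,1}\) with
\begin{itemize}
\item \(\langle e_u,e_v\rangle=0\) when \(u,v\) span an edge, and
\item \(\langle e_u,e_v\rangle\le -1\) (ultraparallel or tangent hyperplanes) when they do not.
\end{itemize}
The Gram matrix \(M_p\) is then determined by the edge pattern plus the chosen non-edge entries. We require that \(M_p\) has signature \((5,1)\), with some zero eigenvalues allowed, so that the vectors can be realized in \(\mathbb R^{5,1}\). Given this, the Vinberg / Andreev-type theory of polyhedra bounded by such hyperplanes shows the following. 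The intersection \(P\) of the negative half-spaces is a (possibly infinite-volume) convex polyhedron whose facet-adjacency pattern is read off from \(L_p\), and the reflection group is discrete with fundamental domain \(P\). Faithfulness follows because the abstract Coxeter presentation is exactly the Poincaré polyhedron theorem presentation. We must also ensure that no unwanted facets meet. This holds because every pair of hyperplanes for a non-edge is ultraparallel (entry \(<-1\)). For higher faces, a set of facets meets in \(\mathbf H^5\) iff the corresponding principal Gram submatrix is positive definite, which we arrange to happen exactly for the simplices of \(L_p\). Convex cocompactness follows if we choose non-edge entries strictly less than \(-1\), so there are no parabolic points, and if \(P\) meets the ideal boundary only in a region where the hyperplanes are ultraparallel. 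Then the convex core is compact modulo \(\Gamma_p\) (Desgroseilliers–Haglund-type criterion).

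\emph{Main obstacle and plan.} The hard step is the signature condition. We need a matrix whose rank is at most \(6\), realizing the combinatorics of a nonorientable-type complex, that is, not the nerve of any compact hyperbolic polytope. We would try two approaches.
\begin{itemize}
\item For \(p=2,3\): exploit the \(\mathbb Z/p\) (or dihedral) symmetry of a symmetric triangulation of \(X_p\). The Gram matrix becomes block-circulant, its eigenvalues split by characters, and we can solve analytically for parameters killing all but six eigen-directions with the correct signs. This should give one-parameter infinite families.
\item For \(4\le p\le 8\): search numerically for non-edge entries \(\le-1-\varepsilon\) making \(M_p\) have exactly five positive eigenvalues, one negative, and the rest zero. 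One way is to parametrize the vectors \(e_v\in\mathbb R^{5,1}\) directly and minimize the violations of the orthogonality and ultraparallelism inequalities. We then certify the solution rigorously with interval arithmetic, checking all the inequalities and the positive-definiteness or indefiniteness of each relevant principal submatrix.
\end{itemize}

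Finally, dimension five is optimal. The limit set \(\Pi_p\) does not embed in \(\mathbb S^3\), so no such representation can exist into \(\operatorname{Isom}(\mathbf H^4)\).
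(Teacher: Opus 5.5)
Your architecture matches the paper's. On the group side you use a flag-no-square triangulation of $X_p$, Moussong's criterion and Dranishnikov's theorem. On the geometric side you use unit spacelike poles with $\langle e_u,e_v\rangle=0$ on edges and $<-1$ on non-edges, then Poincar\'e's theorem for discreteness and faithfulness, and Desgroseilliers--Haglund (no asymptotic pair of facets) for convex cocompactness. Citing Vinberg's face theorem in place of Lemma~\ref{lemma:connected} is fine, provided you note that the Gram matrix is indecomposable, so that Perron--Frobenius gives a timelike point strictly inside every half-space. The paper instead certifies $\langle O,n_v\rangle<0$ directly.

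\textbf{The genuine gap is the certification step.} You propose to check ``all the inequalities'' with interval arithmetic. But the defining conditions $\langle e_v,e_v\rangle=1$ and $\langle e_u,e_v\rangle=0$ are equalities, and a floating-point configuration satisfies them only approximately. Its reflections do not commute, it bounds no right-angled polyhedron, and interval evaluation at that point does not show that any exact configuration exists nearby. What is missing is an a posteriori existence theorem for the equality system. The paper applies the Chen--Womersley criterion (Theorem~\ref{thm:CW}, an underdetermined Newton--Kantorovich theorem) to $F(\nu)=0$. Using the global Lipschitz constant $2(d+1)$ of Lemma~\ref{lipschitzbound}, it obtains an explicit box of radius $r_1$ containing an exact zero (Theorem~\ref{thm:criterion}). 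Only then does it verify $\langle\nu_i,\nu_j\rangle<-1$ on non-edges and $\langle O,\nu_i\rangle<0$ by ball arithmetic over the \emph{whole} box, so that these hold at the exact zero. In the gauge-fixed symmetric cases a Krawczyk inclusion plays the same role. Conversely, your proposed check of definiteness of principal submatrices is unnecessary. For a flag right-angled nerve, cliques give identity blocks, and any vertex set containing a non-edge has a $2\times2$ principal minor $1-g^2<0$.

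Three smaller points.
\begin{enumerate}
\item You leave $L_p$ unspecified, but the choice is part of the proof. The cyclic ansatz on the symmetric nerves $\mathcal T_{p,m}$ is obstructed for $p\ge5$, and for $p=4$ it works only after inserting collar layers (Remark~\ref{remark:three-layer-obstruction}). The realizations certified for all $2\le p\le8$ live on Dranishnikov's special subdivisions $\mathcal K_p^{\#}$ of small asymmetric triangulations, which are flag-no-square by \cite[Proposition~2.1]{Dran:1999}. The paper takes $G_p=W_p$ for every $p$; your separate symmetric treatment of $p=2,3$ is also available via Proposition~\ref{prop:smallp}.
\item Dranishnikov's boundary theorem is stated for prime $p$. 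For $p=4,6,8$ you need the observation of Remark~\ref{remark:defP} that only the degree-$p$ attaching map is used.
\item Your heuristic that vertex links are circles is false for $p\ge3$: a vertex on the singular circle of $X_p$ has link the suspension of $p$ points.
\end{enumerate}
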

Theorem~\ref{thm:pontryaginsmallp} therefore provides new topological
types of limit sets of higher-dimensional Kleinian groups.  The ambient
dimension is optimal.  Indeed, if a prime \(q\) divides \(p\), the usual
Mayer--Vietoris calculation gives
\[
 \dim_{\mathbb Z/q\mathbb Z}\Pi_p=2,
 \qquad
 \dim_{\mathbb Q}\Pi_p=1.
\]
Thus \(\Pi_p\) is not dimensionally full-valued and cannot embed in
\(\mathbb S^3\) by \cite[Theorem~1.10]{Dran:2011}.  Consequently, none of
the spaces occurring in Theorem~\ref{thm:pontryaginsmallp} can be the
limit set of a discrete subgroup of
\(\operatorname{Isom}(\mathbf H^4)\).

\begin{conjecture}\label{conj:all-p}
For every integer \(p\geq2\), there exists a hyperbolic right-angled
Coxeter group \(W_p\), with nerve a triangulation of \(X_p\), that admits
a discrete, faithful, convex cocompact reflection representation
\[
 W_p\longrightarrow\operatorname{Isom}(\mathbf H^5).
\]
Consequently, the image of this representation has limit set homeomorphic
to the Pontryagin surface \(\Pi_p\).
\end{conjecture}

We prove Theorem~\ref{thm:pontryaginsmallp} by establishing
Propositions~\ref{prop:smallp} and~\ref{prop:morep}.
Proposition~\ref{prop:smallp} retains a large cyclic symmetry and gives
especially transparent families for the small indices.

\begin{prop}\label{prop:smallp}
For \(p\geq2\) and \(m\geq5\), the right-angled Coxeter group
\(G_{p,m}\) defined in Subsection~\ref{subsection:group} is Gromov hyperbolic and has boundary homeomorphic to
\(\Pi_p\).  The collar-thickened group \(G_{4,5,3}\) is also Gromov
hyperbolic and has boundary homeomorphic to \(\Pi_4\).  Moreover:
\begin{itemize}
    \item for \(p=2,3\) and every \(m\geq5\), the group \(G_{p,m}\)
    admits a \(\mathbb Z_{pm}\)-symmetric discrete, faithful, convex
    cocompact representation into \(\operatorname{Isom}(\mathbf H^5)\);
    \item the group \(G_{4,5,3}\) defined in
    Subsection~\ref{subsection:collar} admits a
    \(\mathbb Z_{20}\)-symmetric discrete, faithful, convex cocompact
    representation into \(\operatorname{Isom}(\mathbf H^5)\).
\end{itemize}
\end{prop}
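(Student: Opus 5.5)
Hyperbolicity and the identification of the boundary are purely combinatorial statements about the nerve, so I will treat them first. For a right-angled Coxeter group \(W_L\) with nerve a flag simplicial complex \(L\), Moussong's criterion says \(W_L\) is Gromov hyperbolic iff \(L\) has no induced square, i.e.\ \(L\) is flag and every 4-cycle has a diagonal. So the first step is to check that the triangulation \(L_{p,m}\) of \(X_p\) from Subsection~\ref{subsection:group}, and the collar-thickened triangulation \(L_{4,5,3}\) from Subsection~\ref{subsection:collar}, are flag and have no empty squares. The parameter \(m\geq5\) controls the length of the boundary cycle and the number of concentric rings, and this is exactly what excludes short empty cycles near the identified boundary circle. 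For the boundary I will use Dranishnikov's theorem \cite{Dran:1997,Dran:1999}: if the nerve \(L\) is a flag triangulation of \(X_p=M(\mathbb Z/p,1)\) satisfying the no-square condition, then the Davis complex is CAT(\(-1\)) and \(\partial W_L\cong\Pi_p\). The idea behind that theorem is that \(\partial W_L\) is an inverse limit in which each step attaches mod-\(p\) Möbius bands along links of vertices, and this matches the inverse system defining \(\Pi_p\). So this part reduces to checking the flag and no-square conditions, which amounts to a finite, \(m\)-uniform case analysis.

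For the representations I will use Vinberg-type reflection groups. Each generator \(s_v\) is sent to the reflection in a spacelike unit vector \(e_v\in\mathbb R^{5,1}\), and the Gram matrix must satisfy the following conditions: \(\langle e_u,e_v\rangle=0\) when \(uv\) is an edge of the nerve, and \(\langle e_u,e_v\rangle\leq-1\) when it is not. The second condition makes the hyperplanes disjoint or tangent, and for convex cocompactness I want them strictly ultraparallel, i.e.\ \(<-1\). If the resulting polyhedron \(P=\bigcap\{x:\langle x,e_v\rangle\le0\}\) has the property that its face poset matches the nerve, then Vinberg's theorem gives that \(\rho\) is discrete and faithful with fundamental domain \(P\). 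The face-poset condition means that the faces meet exactly along simplices of \(L\), that the ideal boundary of \(P\) avoids the limit set, and that there are no parabolics. Convex cocompactness then follows because \(P\) meets the convex hull of the limit set in a compact set, since all non-adjacent pairs are ultraparallel.

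To build the vectors with \(\mathbb Z_{pm}\) symmetry I will take an isometry \(R\) of \(\mathbb R^{5,1}\) of order \(pm\) acting by rotations on two orthogonal 2-planes, with angles \(2\pi/(pm)\) and \(2\pi k/(pm)\), and fixing a timelike direction. Each orbit of vertices of \(L_{p,m}\) (the boundary ring, the interior rings, and the center) will be given by vectors of the form \(e_j=R^j e_0\), with only a few free parameters per ring: radii and heights. The orthogonality conditions then become finitely many trigonometric equations, namely one per edge type up to symmetry, and the ultraparallel conditions become inequalities of the form \(\cos(\cdot)\) terms \(<-1\). For \(p=2,3\) I will solve these equations explicitly and verify the inequalities analytically for all \(m\geq5\), using monotonicity in \(m\) together with asymptotics as \(m\to\infty\). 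The \(\mathbb Z_{20}\) collar case \(G_{4,5,3}\) is handled the same way, with an explicit numerical solution.

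The main obstacle is twofold. First, the system of strict inequalities must hold uniformly in \(m\). Second, and harder, I must show that \(P\) has exactly the combinatorics of the nerve, meaning no unintended intersections among three or more hyperplanes beyond those given by simplices of \(L\). The pairwise \(\le -1\) conditions do not automatically rule out spurious higher intersections. My first approach is to invoke the right-angled criterion: when all adjacent pairs are orthogonal and all non-adjacent pairs are ultraparallel, then for a flag nerve every clique spans a face, and non-cliques cannot meet because they contain a non-adjacent, hence disjoint, pair. Under that criterion, what remains is to show that each simplex of \(L\), in particular each 2-simplex, has a Gram submatrix that is positive definite, so that the corresponding face is nonempty and compact. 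The symmetry reduces this to a few small determinants per orbit type.
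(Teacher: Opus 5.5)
Your overall plan matches the paper's: Moussong's criterion and Dranishnikov's theorem for the combinatorial half, a $\mathbb Z_{pm}$-equivariant pole ansatz solved in closed form with uniform-in-$m$ estimates for $p=2,3$, a separate solution for the collar, then Poincar\'e/Vinberg and the absence of asymptotic facet pairs.

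The genuine gap is in the face-realization step, where you aim at the wrong target. Spurious higher intersections are not the issue: as you note, any non-clique contains an ultraparallel pair. Positive definiteness of $G_T$ for a simplex $T$ is vacuous here, because in the right-angled case $G_T$ is the identity, so there are no determinants to check. Positive definiteness only gives $\bigcap_{X\in T}B_X\neq\emptyset$ in $\mathbf H^5$. What Poincar\'e's theorem needs is that this flat meets $P$ at a point lying on no other mirror. Only then does every generator yield a genuine facet and every nerve edge a genuine ridge; without this, faithfulness does not follow. This cannot be read off the Gram matrix at all. Replacing every $e_v$ by $-e_v$ leaves the Gram matrix and the reflections unchanged, but it turns each ultraparallel pair of half-spaces facing each other into a pair facing away, so the new $P$ is empty.

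The missing ingredient is an interior point. The paper obtains it by normalizing every pole to have last coordinate $1$, so that $O=(0,\dots,0,1)$ satisfies $\langle O,u_X\rangle=-q_X^{-1/2}<0$. Lemma~\ref{lemma:connected} then combines this with global non-positivity of the normalized Gram matrix and $G_T^{-1}\geq0$ to produce the explicit point $v_T=O+\sum_{X\in T}c_Xu_X$ on the face. If you prefer to quote Vinberg's theorem on acute-angled polyhedra, nonempty interior is exactly the hypothesis you must still verify. You could do so by such a normalization, or by a Perron--Frobenius argument on the indecomposable Gram matrix followed by a global sign choice. Note also that these faces are not compact, since $P$ has infinite volume.

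Three smaller points. First, for $G_{4,5,3}$ an \emph{explicit numerical solution} proves nothing about exact orthogonality. You need an a posteriori existence theorem for the sixteen gauge-fixed edge equations, plus rigorous enclosures for spacelikeness and for all $5240$ non-edge inequalities; this is what the Krawczyk/Arb certificate of Proposition~\ref{prop:certified-453} supplies. Second, Dranishnikov's boundary theorem is stated for prime $p$, whereas the statement covers every $p\geq2$, including $p=4$ for the collar. You therefore need the observation of Remark~\ref{remark:defP} that primality is never used. Third, that $P$ meets the convex hull of the limit set in a compact set when no two facets are asymptotic is a theorem, not an immediate consequence; the paper cites Desgroseilliers--Haglund, Theorem~4.12. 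Finally, $m$ controls the cycle lengths, not the number of rings, which is fixed.
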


\begin{remark}
The groups \(G_{2,m}\), \(m\geq5\), are pairwise non-isomorphic because
their abelianizations have different ranks.  We expect that this family
contains infinitely many commensurability classes, and possibly
infinitely many quasi-isometry classes.
\end{remark}

The full representation variety of \(G_{p,m}\) in
\(\operatorname{Isom}(\mathbf H^5)\) is large.  Our construction instead
uses a four-parameter ansatz with a prescribed
\(\mathbb Z_{pm}\)-symmetry.  This symmetry makes the equations tractable
but also introduces additional rigidity.  The resulting three-layer
ansatz is obstructed for \(p\geq5\), and also for \(p=4\) when
\(m\geq6\); see Remark~\ref{remark:three-layer-obstruction}.  The  remaining
pair \((p,m)=(4,5)\) can also be excluded by a separate argument, and the resulting obstruction
leads naturally to the collar construction of
Subsection~\ref{subsection:collar}.  

A first approach to moving beyond the cyclic ansatz is to retain the right-angled condition while dropping the symmetry assumption. Applying Dranishnikov's special subdivision to suitably chosen asymmetric triangulations of \(X_p\) yields the following:

\begin{prop}\label{prop:morep}
For every \(p\in\{2,3,4,5,6,7,8\}\), the right-angled Coxeter group
\(W_p\) defined in Subsection~\ref{subsection:asysub} is Gromov
hyperbolic, has boundary homeomorphic to \(\Pi_p\), and admits a
discrete, faithful, convex cocompact reflection representation into
\(\operatorname{Isom}(\mathbf H^5)\).
\end{prop}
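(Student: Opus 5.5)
For each \(p\in\{2,\dots,8\}\) the proof splits into a combinatorial half (hyperbolicity and the boundary) and a geometric half (the representation).

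\textbf{Combinatorial half.} I start from an explicit asymmetric triangulation \(L_p\) of \(X_p\) chosen so that, after Dranishnikov's special subdivision, it is a flag complex with no induced square. Call the subdivided complex \(L'_p\); it is the nerve of \(W_p\).

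By Moussong's criterion, \(W_p\) is Gromov hyperbolic exactly when \(L'_p\) is flag and has no induced \(4\)-cycle. Flagness and the no-square condition are finite checks on the vertex set and edge set of \(L'_p\), which I verify directly or by computer.

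For the boundary, I use Dranishnikov's theorem for right-angled Coxeter groups whose nerve is a special subdivision of a triangulated \(X_p\): the Gromov boundary is \(\Pi_p\). This follows because the Davis complex is assembled from chambers along copies of \(M_p\), which reproduces the inverse system \(Y_0\leftarrow Y_1\leftarrow\cdots\) above. I also check that \(L'_p\) satisfies the local conditions in that theorem, namely that links are suitable circles or arcs.

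\textbf{Geometric half.} To each vertex \(v\) of \(L'_p\) I attach a unit spacelike vector \(e_v\in\mathbb R^{5,1}\), and \(v\) acts by reflection in \(e_v^\perp\). The Gram entries are required to satisfy:
\begin{itemize}
\item \(\langle e_u,e_v\rangle=0\) whenever \(uv\) is an edge;
\item \(\langle e_u,e_v\rangle<-1\) whenever \(uv\) is not an edge, so the hyperplanes are ultraparallel.
\end{itemize}

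Under these conditions, Vinberg's theory applied to the polyhedron \(P=\bigcap\{x:\langle x,e_v\rangle\le 0\}\) gives the following. If \(P\) has nonempty interior in \(\mathbf H^5\), and the face structure of \(P\) at infinity has no ideal points, then the reflection group is discrete and faithful with fundamental domain \(P\). Because \(P\) has finitely many faces, all of them ultraparallel or orthogonal, \(P\cap\mathbf H^5\) has finite-volume convex core, so the action is convex cocompact.

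Concretely, I produce the vectors as a numerical solution of the orthogonality equations. I then certify it by interval arithmetic or an exact rational perturbation, showing:
\begin{itemize}
\item the orthogonality equations hold exactly;
\item every non-edge Gram entry is \(<-1\);
\item for every simplex \(\sigma\) of \(L'_p\), the Gram matrix restricted to \(\sigma\) is positive definite, so the faces meet in \(\mathbf H^5\);
\item the Gram matrix has signature \((5,1)\) after possibly restricting to a spanning set.
\end{itemize}

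The limit set is then equivariantly homeomorphic to \(\partial W_p\cong\Pi_p\), because a convex cocompact representation has an orbit map that is a quasi-isometric embedding.

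\textbf{Main obstacle.} I expect the hard step to be actually finding the Gram data. The subdivided nerve has many vertices, while \(\mathbb R^{5,1}\) has only six dimensions. Each edge is an orthogonality constraint, so a generic configuration is overdetermined, especially around the multiply-covered boundary circle of \(X_p\).

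First I would minimise the number of vertices of the original triangulation of \(X_p\). Then I would place vectors greedily: build the edge-star of the singular circle first, and solve the remaining constraints layer by layer with Newton's method. At each stage I would keep the non-edge inequalities strictly satisfied, so the final solution can be certified rigorously. This search is the reason the statement is restricted to \(p\le 8\).
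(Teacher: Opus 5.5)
Your overall route is the paper's: Moussong and Dranishnikov for the group, a certified pole configuration with edge products \(0\) and non-edge products \(<-1\), Poincar\'e's theorem, and absence of asymptotic facets for convex cocompactness. However, the geometric half has a genuine gap at the step that turns Gram data into a polytope with the \emph{prescribed} combinatorics. You state ``if \(P\) has nonempty interior'' as a hypothesis and never discharge it. Nothing in your certificate shows that each \(e_v^\perp\) actually bounds a facet of \(P\), or that each nerve edge \(uv\) gives a ridge of \(P\). The check you propose for this is vacuous: with unit vectors and right angles, the Gram matrix on a simplex \(\sigma\) is the identity. Its positive definiteness only says that the hyperplanes \(e_v^\perp\), \(v\in\sigma\), meet somewhere in \(\mathbf H^5\), possibly away from \(P\). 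Without this step, Poincar\'e's theorem controls only the group generated by reflections in the genuine facets of \(P\). A reflection \(r_v\) whose mirror is not a facet is not covered, so neither discreteness of the full image nor injectivity of \(W_p\to\operatorname{Isom}(\mathbf H^5)\) follows.

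The paper closes this by adding to the certificate the inequalities \(\langle O,n_v\rangle<0\) for \(O=(0,\dots,0,1)\), so that \(O\) is an interior point, and then proving Lemma~\ref{lemma:connected}. In the right-angled case that argument is short. For a simplex \(\sigma\), put \(\alpha_v=-\langle O,e_v\rangle>0\) and \(x_\sigma=O+\sum_{v\in\sigma}\alpha_ve_v\). Then:
\(\langle x_\sigma,e_v\rangle=0\) for \(v\in\sigma\);
\(\langle x_\sigma,e_w\rangle\le\langle O,e_w\rangle<0\) for \(w\notin\sigma\), because every off-diagonal Gram entry is \(\le0\);
\(\langle x_\sigma,x_\sigma\rangle=-1-\sum_{v\in\sigma}\alpha_v^2<0\).
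So \(x_\sigma\) is a point of \(P\) lying on exactly the mirrors indexed by \(\sigma\).

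Alternatively, your signature check can be made to produce the interior point. Write \(G=I-M\) with \(M\ge0\) irreducible (the non-edge graph is connected). A negative eigenvalue of \(G\) together with Perron--Frobenius gives \(c>0\) with \(Gc=\lambda_{\min}c\). Then \(x=\sum_vc_ve_v\) satisfies \(\langle x,e_w\rangle<0\) for all \(w\) and \(\langle x,x\rangle<0\). If \(x\) lies on the lower sheet, replace \(x\) and every \(e_v\) by their negatives, which changes no reflection. Either way, this step must be stated and certified, and the face argument above is still needed afterwards.

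Two smaller corrections. First, convex cocompactness does not come from a ``finite-volume convex core''; that is geometric finiteness, which permits cusps. What you need is that the strict inequalities \(\langle e_u,e_v\rangle<-1\) leave no asymptotic pair of facets, after which \cite[Theorem~4.12]{DHaglund:2013} applies, as in the paper. Second, in the combinatorial half, the link condition you propose to check would fail for \(p\ge3\): a vertex on the singular circle of \(X_p\) has link consisting of \(p\) arcs with common endpoints. No such condition is needed. What is needed, for \(p=4,6,8\), is the observation that Dranishnikov's boundary theorem, stated for prime \(p\), does not use primality (Remark~\ref{remark:defP}).
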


The upper bound \(p\leq8\) in Proposition~\ref{prop:morep} reflects the
cost of the search-and-certification procedure rather than an observed
mathematical obstruction.  As \(p\) increases, both the size of the
subdivided nerve and the dimension of the corresponding nonlinear system
grow, and the running time required for rigorous interval certification
increases rapidly.  By \(p=8\), however, the underlying triangulations
already exhibit a stable combinatorial pattern.  Verifying further values
one at a time would therefore extend only the finite list of certified
cases, without addressing the general existence problem.  We stop at
\(p=8\) for this reason and regard the observed pattern as evidence for
Conjecture~\ref{conj:all-p}.

Another way to recover flexibility is to retain the cyclic
combinatorics but allow acute Coxeter angles.  This leads to a
two-layer weighted construction and the following complementary result.

\begin{thm}\label{thm:pontryaginlargep}
For each \(p\in\{2,3,4\}\), there is a non-right-angled hyperbolic
Coxeter group \(J_p\) whose weighted nerve is a triangulation of \(X_p\)
and which admits a \(\mathbb Z_{5p}\)-symmetric discrete, faithful, convex
cocompact representation
\[
J_p\longrightarrow\operatorname{Isom}(\mathbf H^5).
\]
\end{thm}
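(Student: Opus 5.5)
The plan is to construct, for each \(p\in\{2,3,4\}\), an explicit weighted nerve with a \(\mathbb Z_{5p}\)-symmetric two-layer structure, together with an explicit Gram matrix. Each generator \(s_v\) of \(J_p\) is realized by reflection in a spacelike unit vector \(e_v\in\mathbb R^{5,1}\). Start from a triangulation \(L\) of \(X_p\) built from two concentric cyclic layers of vertices around a central vertex:
\begin{itemize}
\item an inner ring of \(5p\) vertices mapping to the boundary circle of \(\mathbb D^2\), where the identification \(x\sim e^{2\pi i/p}x\) folds these onto \(5\) classes;
\item an outer ring closing up the disk.
\end{itemize}
Assign Coxeter labels \(m_{uv}\in\{2,3,4,\dots\}\) on the edges of \(L\) and \(\infty\) on non-edges. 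The labels are chosen so that every edge-pair is a spherical dihedral group. Every triangle \((l,m,n)\) must satisfy \(1/l+1/m+1/n>1\), so that simplices of \(L\) are exactly the spherical subsets. There must be no empty squares with all right angles and no Euclidean triangles \((3,3,3)\), \((2,4,4)\), \((2,3,6)\). Moussong's criterion then gives Gromov hyperbolicity of \(J_p\). The nerve is \(L\) by construction, which settles the combinatorial half of the statement.

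For the geometric half I would impose \(\mathbb Z_{5p}\)-equivariance. The cyclic symmetry acts on \(\mathbb R^{5,1}\) by a block rotation with rotation angles \(2\pi k_1/5p\) and \(2\pi k_2/5p\) on two planes, and trivially on a \((1,1)\)-plane. The vectors of each layer are then orbits of a single vector
\[
e_j = (a\cos\theta_j,\ a\sin\theta_j,\ b\cos k\theta_j,\ b\sin k\theta_j,\ c,\ d),
\]
one vector per layer, with an unknown offset phase between the layers. The condition \(\langle e_u,e_v\rangle=-\cos(\pi/m_{uv})\) on edges reduces, by symmetry, to a handful of trigonometric-polynomial equations in roughly four to six real parameters. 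The weights give extra freedom compared with the right-angled three-layer ansatz, which is the point of the remark on the three-layer obstruction. The condition \(\langle e_u,e_v\rangle\le -1\) must hold on every non-edge. By symmetry this is finitely many inequalities, which are checked with interval arithmetic at an explicit (possibly algebraic) solution.

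Given such vectors, Vinberg's theory shows that the polyhedron \(P=\bigcap\{x:\langle x,e_v\rangle\le 0\}\) is a Coxeter polyhedron with the prescribed dihedral angles. Its reflection group is therefore discrete with fundamental domain \(P\), and the representation is faithful. Convex cocompactness follows from the criterion used earlier in the paper: the Gram matrix of every spherical subset is positive definite (automatic for finite Coxeter groups with these labels), and non-adjacent facets do not meet even at infinity. This gives that \(P\) has no ideal vertices. The recession set of \(P\) at infinity is then a union of disjoint parabolic-free pieces, so the group acts cocompactly on the convex hull of its limit set.

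The main obstacle is the actual solution of the symmetric Gram system for \(p=4\). The number of equations approaches the number of parameters there, and the non-edge inequalities between the far-apart orbit pairs are tight. I would first fix the labels to be \(3\) on inner-ring edges and \(2\) elsewhere, then solve numerically. If that fails, I would vary the labels among \(\{3,4\}\) and the rotation exponent \(k\). Once a numerical solution is found, I would certify it with an interval Newton (Krawczyk) argument, and also certify the finitely many strict non-edge inequalities.
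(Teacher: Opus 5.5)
Your skeleton is the paper's: Moussong's criterion, a $\mathbb Z_{5p}$-equivariant pole ansatz rotating two Euclidean planes by $2\pi/5$ and $2\pi/(5p)$, a Krawczyk certificate for the edge equations, interval verification of strict hyperparallelism on every non-edge, then the polyhedron theorem and the no-asymptotic-facets criterion. The gap is in the combinatorial half. You call it settled ``by construction'' without ever fixing the labels, and the labels you propose to try first cannot work, for a structural reason.

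Your rings are also reversed. The ring folded onto $5$ classes is the outer boundary layer $A$, the other ring $C$ has $5p$ distinct vertices, and the cone vertex $O_c$ closes the disk. Now $A_i=A_{i+5}$ is joined to both $C_i$ and $C_{i+5}$, and each of these is joined to $O_c$. Since $A_iO_c$ and $C_iC_{i+5}$ are non-edges, the vertices $A_i,C_i,O_c,C_{i+5}$ span an induced square. If all four of its edges carry label $2$, the corresponding special subgroup is $D_\infty\times D_\infty$. Then $J_p$ contains $\mathbb Z^2$, is not hyperbolic, and has no convex cocompact representation, whatever the geometry. Putting $3$ (or $4$) on the edges of either ring and $2$ elsewhere does exactly this.

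You need labels $\geq3$ on the $A$--$C$ or $C$--$O_c$ edges, while keeping the triangles $A_iC_iC_{i+1}$, $A_iA_{i+1}C_{i+1}$, $C_iC_{i+1}O_c$ spherical. The paper takes $(A_iA_{i+1},A_iC_i,A_iC_{i+1},C_iC_{i+1},C_iO_c)=(2,3,3,2,3)$. Then every triangle has type $(2,3,3)$ and there are no larger cliques. No induced square has all labels $2$, because the label-$2$ edges form disjoint cycles of lengths $5$ and $5p$.

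This choice feeds back into your ansatz. With $C_iO_c$ of weight $3$, the cone pole cannot be $(0,0,0,0,1/c,1)$, since that forces $\langle n(C_i),n(O_c)\rangle=0$. An independent coordinate $o_c$ is needed. After fixing $a,c,w_c$, the paper obtains a square system of five equations in $x_a,x_c,y_c,z_c,o_c$ at $m=5$, and this is what the Krawczyk test certifies.

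Two smaller points. First, the non-edge condition must be strict, $\langle e_u,e_v\rangle<-1$, not $\le-1$ as you first write; equality gives asymptotic facets and hence parabolic subgroups. Second, you could cite Vinberg for the polyhedron step instead of the paper's direct face-realization argument (Lemma~\ref{lemma:connected}, with base point $O=(0,\dots,0,1)$, available because every pole is normalized to last coordinate $1$). If you do, you should also check that the normalized Gram matrix is indecomposable, so that his theorem applies.
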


For \(p=2\), the boundary of \(J_p\) is \(\Pi_2\)
\cite{Swiatkowski:2009}.  For \(p=3,4\), we expect the boundary to be
\(\Pi_p\); see Remark~\ref{remark:nonrightangledboundary}.  The weighted
construction also suggests a route toward further topological types of
limit sets in \(\partial\mathbf H^5\).

For comparison, Theorem~4.2 of \cite{HPWalsh} describes every planar
boundary of a hyperbolic group as the limit of a tree system whose
vertex spaces are \(2\)-spheres (only when the underlying tree is
trivial), Sierpi\'nski carpets, circles, disconnected compacta, or
spaces arising recursively from such tree systems.  In particular,
among one-ended convex cocompact subgroups of
\(\operatorname{Isom}(\mathbf H^3)\) with trivial Bowditch JSJ
decomposition, only three homeomorphism types of limit sets can occur.
The situation suggested by our constructions in dimension five is
markedly different.  The Pontryagin surfaces \(\Pi_p\) are pairwise
nonhomeomorphic as \(p\) ranges over the primes,  and are connected without local cut points
\cite{APon:1990, Dran:2011}.  The certified cases established here,
together with the structural pattern underlying
Conjecture~\ref{conj:all-p}, provides evidence that infinitely many homeomorphism types occur among the limit sets of one-ended convex
cocompact subgroups of
\(\operatorname{Isom}(\mathbf H^5)\) with trivial decomposition.
Our results could also be compared with several recent constructions.
Bourdon \cite{Bourdon:1997} exhibited a discrete subgroup of
\(\operatorname{Isom}(\mathbf H^4)\) with universal Menger-curve limit
set.
Douba--Lee--Marquis--Ruffoni \cite{DLMR:2025} produced a convex
cocompact subgroup of \(\operatorname{Isom}(\mathbf H^4)\) whose limit
set is a Pontryagin sphere.  Unlike the surfaces \(\Pi_p\), that space
embeds in \(\mathbb S^3\).  Granier \cite{Granier} obtained a universal
Menger-curve limit set in the complex hyperbolic plane; see also
\cite{MaXie:2022}.  More recently, Douba--Lee--Marquis--Ruffoni
\cite{DLMR:2026} developed a general construction of
higher-dimensional Kleinian groups with two- and three-dimensional
limit sets.  A distinguishing feature of
Theorem~\ref{thm:pontryaginsmallp} is the optimality of the ambient
dimension.

For related convex cocompact representations, see
\cite{Kapovich:2005,DHaglund:2013}.  The groups
constructed here have two-dimensional boundaries and hence virtual
cohomological dimension three.  They therefore provide nontrivial
higher-dimensional examples of convex cocompact Kleinian groups.

\medskip
\noindent\textbf{Outline of the paper.}
We begin with flag-no-square triangulations of \(X_p\), whose
right-angled Coxeter groups have Gromov boundary \(\Pi_p\).  We first
exploit the cyclic symmetry of these triangulations to construct explicit
Lorentzian pole vectors for \(p=2,3\), uniformly in \(m\geq5\).  The case
\(p=4\) is obtained by inserting three collar layers and certifying the
resulting solution with interval arithmetic.  We then retain right angles
but abandon cyclic symmetry: suitable asymmetric nerves yield certified
realizations for every \(2\leq p\leq8\).  A common geometric argument
proves that all of these representations are discrete, faithful, and
convex cocompact.  Section~\ref{subsec:representationlargep} pursues the
complementary direction of retaining the cyclic combinatorics while
allowing acute Coxeter angles.  Finally,
Section~\ref{sec:certification-framework} separates numerical search from
rigorous certification and establishes the criterion used in the finite
computations.  The appendices explain the computational origin of the
uniform ansatz and display a complete asymmetric pole configuration.

\medskip
\noindent\textbf{Acknowledgment.}
The authors thank A.~N.~Dranishnikov for helpful correspondence about
hyperbolic groups with Pontryagin-surface boundaries. The authors are sincerely grateful to Gye-Seon Lee for many helpful discussions and for bringing the second author's related work to the attention of the other authors, thereby facilitating this collaboration.

\section{Preliminaries}\label{sec:preliminaries}
\subsection{Hyperbolic space}

We use the projective and hyperboloid models of five-dimensional
hyperbolic space; see \cite{VinbergS:1993}.  Let
\(\mathbb R^{5,1}\) denote \(\mathbb R^6\) equipped with the bilinear
form of signature \((5,1)\)
\[
 \langle \mathbf z,\mathbf w\rangle
 =\mathbf z^{\mathsf T}L\mathbf w,
 \qquad
 L=\begin{pmatrix}
 I_5&0\\
 0&-1
 \end{pmatrix}.
\]
Set
\[
\begin{aligned}
V_-&=\{\mathbf z\in\mathbb R^6\setminus\{0\}:
        \langle\mathbf z,\mathbf z\rangle<0\},\\
V_0&=\{\mathbf z\in\mathbb R^6\setminus\{0\}:
        \langle\mathbf z,\mathbf z\rangle=0\},\\
V_+&=\{\mathbf z\in\mathbb R^6\setminus\{0\}:
        \langle\mathbf z,\mathbf z\rangle>0\}.
\end{aligned}
\]

Let
\[
 [\,\cdot\,]\colon\mathbb R^6\setminus\{0\}\longrightarrow\mathbb{RP}^5
\]
be the canonical projection.  The projective model of
\(\mathbf H^5\) is \([V_-]\), and its ideal boundary is
\(\partial\mathbf H^5=[V_0]\).  We use boldface letters for vectors in
\(\mathbb R^{5,1}\) and the corresponding plain letters for their
projective classes.  The affine chart
 \begin{equation}\label{projection}
 \left(\begin{matrix} z_1 \\ z_2\\ \vdots \\ z_{5} \end{matrix}\right)
 \longmapsto \left[\begin{matrix} z_1 \\
 z_2 \\ \vdots \\ z_5 \\ 1  \end{matrix}\right].
 \end{equation}
identifies \(\mathbf H^5\) with the unit ball
\(\mathbb B^5\subset\mathbb R^5\) and
\(\partial\mathbf H^5\) with \(\mathbb S^4=\partial\mathbb B^5\).

For computations, we also use the hyperboloid model
\[
 \mathbf H^5=
 \{v\in\mathbb R^{5,1}:\langle v,v\rangle=-1,\ v_6>0\}.
\]

Let
\[
\mathbf O(5,1)=
\{A\in\operatorname{GL}(6,\mathbb R):
  \langle A\mathbf x,A\mathbf y\rangle
  =\langle\mathbf x,\mathbf y\rangle
  \text{ for all }\mathbf x,\mathbf y\in\mathbb R^6\}.
\]
The projective orthogonal group
\[
 \mathbf{PO}(5,1)=\mathbf O(5,1)/\{\pm I\}
\]
is naturally identified with \(\operatorname{Isom}(\mathbf H^5)\).

\subsection[The hyperbolic group Gpm]
{The hyperbolic group \texorpdfstring{\(G_{p,m}\)}{Gpm}}
 \label{subsection:group}

Recall that \(X_p\) is obtained from \(\mathbb D^2\) by identifying
boundary points in the same orbit of the rotation through \(2\pi/p\).
Equivalently, \(X_p\) is obtained by attaching a disk to the boundary of
the mod-\(p\) M\"obius band \(M_p\).  It is the Moore space
\(M(\mathbb Z/p\mathbb Z,1)\); in particular,
\(X_2\cong\mathbb{RP}^2\).

A simplicial complex is \emph{flag} if every finite clique in its
one-skeleton spans a simplex.  It is \emph{no-square} if every embedded
four-cycle in its one-skeleton has a diagonal.  A triangulation with
both properties is called \emph{flag-no-square}.

We use the symmetric triangulation
\(\mathcal T_{p,m}\) of \(X_p\) shown in
Figure~\ref{figure:triangulation}.  It admits a
\(\mathbb Z_{pm}\)-symmetry and has the following vertices:
	\begin{enumerate}
	\item the outer red circle contains \(pm\) positions whose labels are
	periodic with period \(m\), so that \(A_{i+m}=A_i\);

		\item the middle blue circle contains the cyclically ordered
		vertices \(\{B_i\}_{i=1}^{pm}\);

	\item the small green circle contains the cyclically ordered
	vertices \(\{C_i\}_{i=1}^{pm}\);

	\item the central vertex is labeled \(O_c\).
	\end{enumerate}

Figure~\ref{figure:triangulation} depicts \(\mathcal T_{3,6}\).  Only
one-third is drawn; the remaining two-thirds are obtained by the
\(\mathbb Z_3\)-symmetry.  The layer-by-layer adjacency rules show
immediately that every clique is a face, every embedded four-cycle has a
diagonal, and the outer identification is the degree-\(p\) attaching map.
Thus:

\begin{prop}\label{prop:triangulation}
For all integers \(m\geq5\) and \(p\geq2\), the complex
\(\mathcal T_{p,m}\) is a flag-no-square triangulation of \(X_p\).
\end{prop}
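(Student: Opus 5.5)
I will make the adjacency rules of the figure explicit, check the topology, and then verify flagness and the no-square condition layer by layer. The rules are as follows.

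The central vertex \(O_c\) is joined to every \(C_i\). The green circle is a \(pm\)-cycle \(C_1C_2\cdots C_{pm}\). Between the green and blue circles there is a strip of triangles \(C_iB_iB_{i+1}\) and \(C_iC_{i+1}B_{i+1}\), a zig-zag annulus. The blue circle is a \(pm\)-cycle. Between the blue and red circles there is another zig-zag strip, with triangles \(B_iB_{i+1}A_{i+1}\) and \(B_iA_iA_{i+1}\), where indices of \(A\) are read mod \(m\).

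\emph{Step 1: topology.} Before the identification, the figure is a triangulated disk: a cone on the green cycle with two annular collars attached. The outer identification \(A_{i+m}=A_i\) is exactly the quotient of the boundary circle by rotation through \(2\pi/p\). So the quotient is \(X_p\) as a space. To see that it is a simplicial complex, we need two things.

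\begin{itemize}
\item No two distinct simplices become identified. An outer edge \(A_iA_{i+1}\) occurs \(p\) times in the disk, and these copies are identified as edges of the \(m\)-cycle. This is exactly the Moore-space attaching.
\item No two vertices of a simplex are identified. This holds because every simplex meets the outer circle in at most one edge, and \(m\geq 3\).
\end{itemize}

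\emph{Step 2: flagness.} I will first list the link of each vertex type and show it is a cycle, hence flag.

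\begin{itemize}
\item \(\operatorname{lk}(O_c)\) is the green \(pm\)-cycle.
\item \(\operatorname{lk}(C_i)\) is the cycle \(O_c,C_{i-1},B_i,B_{i+1},C_{i+1}\).
\item \(\operatorname{lk}(B_i)\) is a hexagon.
\item \(\operatorname{lk}(A_i)\) is obtained by gluing \(p\) arcs of the form \(A_{i-1},B_{i-1},B_i,A_{i+1}\) cyclically, giving a cycle of length \(3p\). This is where \(m\geq5\) is used: the \(B\)-neighbours of distinct lifts of \(A_i\) must be disjoint and non-adjacent.
\end{itemize}

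A complex whose vertex links are all cycles of length \(\geq4\), with no "empty triangle" in the \(1\)-skeleton, is flag. I will rule out empty triangles by checking every triple of mutually adjacent vertices by layer type: \((A,A,A)\), \((A,A,B)\), \((A,B,B)\), \((B,B,C)\), \((B,C,C)\), \((C,C,O)\), \((C,C,C)\), \((B,B,B)\). The rigid patterns force each such triple to be a face. The same-circle triples cannot close up because the circles have length \(pm\geq10\), or \(m\geq5\) for the \(A\)'s.

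\emph{Step 3: no-square.} An embedded \(4\)-cycle without a diagonal would be an induced square. I will show that every vertex link is a cycle of length \(\geq5\) and argue via the \(1\)-skeleton distances. Equivalently, I will classify \(4\)-cycles by how many layers they touch; adjacency only occurs between consecutive layers or within a layer.

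\begin{itemize}
\item A \(4\)-cycle within one circle is impossible, since the circles have length \(\geq5\).
\item A \(4\)-cycle using two layers must have two vertices in each layer or a \(3+1\) split. In each zig-zag strip, two vertices of the lower layer sharing two upper neighbours must be consecutive, hence adjacent, which supplies a diagonal.
\item A cycle through \(O_c\) uses two \(C\)'s with a common neighbour other than \(O_c\), so they are consecutive and adjacent.
\end{itemize}

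\emph{Main obstacle.} The main obstacle is the outer layer. There, two lifts of \(A_i\) coincide, so a \(4\)-cycle \(A_i,B_\ast,\ldots\) may wrap around through two different lifts. I will handle this by noting that the \(B\)-neighbours of the lift \(A_{i+km}\) lie within index distance \(1\) of \(i+km\). Two such \(B\)-windows for different \(k\) are at distance \(\geq m-2\geq3\) along the blue cycle. So no short path connects them except through \(A_i\) itself, and no short cycle can run through two different lifts. This is the one place where \(m\geq5\), rather than merely \(m\geq3\), is needed.
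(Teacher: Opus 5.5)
Your route is the paper's: a direct layer-by-layer check of cliques and \(4\)-cycles in the quotient, which the paper compresses into one sentence. However, the step you identify as the main obstacle rests on a false picture of the neighbourhood of the identified circle. In the quotient, the outer neighbours of every lift of \(A_i\) are themselves identified to \(A_{i\pm1}\). So all \(p\) arcs \(A_{i-1},B_{i-1+km},B_{i+km},A_{i+1}\) in \(\operatorname{lk}(A_i)\) share the same two endpoints. For \(p\geq3\) the link is therefore a theta graph with \(p\) branches of length \(3\), not a \(3p\)-cycle, and every edge \(A_{i-1}A_i\) lies in \(p\) triangles (this is why \(X_p\) is not a manifold for \(p\geq3\)).

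Consequently, your claim that no short path joins the \(B\)-windows of different lifts except through \(A_i\), and that no short cycle passes through two lifts, is false. With your indexing, \(A_a\sim B_j\) iff \(a\equiv j\) or \(j+1\pmod m\). Hence
\[
 A_i\sim B_{i-1}\sim A_{i-1}\sim B_{i-1+m}\sim A_i
\]
is a \(4\)-cycle through the lifts of \(A_i\) at positions \(i\) and \(i+m\). For the same reason your two-layer principle fails in the \(A\)--\(B\) strip: \(B_{i-1}\) and \(B_{i-1+m}\) have the two common neighbours \(A_{i-1},A_i\) without being consecutive. These squares do have a diagonal, namely the identified edge \(A_{i-1}A_i\), so the proposition survives. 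But your argument denies that they exist instead of checking them, so the no-square verification along the singular circle is missing.

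The repair stays within your framework, but the link shortcuts should go. A local link condition cannot exclude squares that wrap around, and for \(p\geq3\) the links are not cycles anyway. For flagness it suffices that every link, including the theta graphs of girth \(6\), is triangle-free; that rules out \(4\)-cliques once all \(3\)-cliques are faces. Then run the case analysis in the quotient using the rule above:
\begin{itemize}
\item Two \(A\)'s with a common neighbour \(B_j\), or two \(B\)'s with two common \(A\)-neighbours, force the \(A\)'s to be \(\{A_j,A_{j+1}\}\), hence adjacent.
\item A \(B\) adjacent to both \(A_{x-1}\) and \(A_{x+1}\), or an \(A\) adjacent to both \(B_j\) and \(B_{j\pm2}\), forces \(m\leq3\).
\item A square made of an \(A\)-edge, a \(B\)-edge and two rungs always contains a third rung when \(m\geq4\).
\item A square \(A,B,C,B\), which you omit, has the \(B\)--\(B\) diagonal, because two \(B\)'s with a common \(C\) are consecutive.
\end{itemize}
Similarly, in your \(O_c\) case the common neighbour may be a \(C\); then the diagonal is the spoke from \(O_c\) to it, not \(C_aC_b\).

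In particular, the wrap-around analysis needs only \(m\geq4\). The hypothesis \(m\geq5\) is used exactly to prevent the \(m\)-cycle of \(A\)'s from being an induced square. You noted this correctly in the one-circle case but then misattributed it at the end.
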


Notably, for \(m=3,4\), the triangulation \(\mathcal T_{p,m}\) is not
flag-no-square. Given a triangulation \(\mathcal T\) of a two-complex, its
right-angled Coxeter group has one involutory generator for every vertex
of \(\mathcal T\), with two generators commuting exactly when the
corresponding vertices are joined by an edge.

Let \(G_{p,m}\) be the right-angled Coxeter group with nerve
\(\mathcal T_{p,m}\).  For example, \(G_{2,6}\) has:
	 	\begin{enumerate}

		\item \(6+2\cdot6+2\cdot6+1=31\) involutory generators,
		namely
		\[
		\{A_1,\ldots,A_6,B_1,\ldots,B_{12},
		  C_1,\ldots,C_{12},O_c\};
		\]
			\item \(6+24+12+12+24+12=90\) commutation relations:
			\begin{enumerate}
		\item \(A_iA_{i+1}=A_{i+1}A_i\) for \(1\leq i\leq6\), where \(A_7=A_1\);

			\item \(B_jB_{j+1}=B_{j+1}B_j\) and
			\(C_jC_{j+1}=C_{j+1}C_j\) for \(1\leq j\leq12\),
			where \(B_{13}=B_1\) and \(C_{13}=C_1\);

				\item \(A_iB_i=B_iA_i\) and
				\(A_iB_{i+1}=B_{i+1}A_i\) for \(1\leq i\leq6\);

				\item \(A_iB_{i+6}=B_{i+6}A_i\) and
				\(A_iB_{i+7}=B_{i+7}A_i\) for \(1\leq i\leq6\);

			\item \(B_iC_i=C_iB_i\) and
			\(B_iC_{i+1}=C_{i+1}B_i\) for \(1\leq i\leq12\);

				\item \(O_cC_i=C_iO_c\) for \(1\leq i\leq12\).
			\end{enumerate}
	\end{enumerate}

Similarly, \(G_{3,6}\) has
\(6+3\cdot6+3\cdot6+1=43\) generators and
\(6+36+12+12+12+36+18=132\) commutation relations.

A right-angled Coxeter group is Gromov hyperbolic precisely when its
nerve is no-square \cite{Mou}.  Proposition~\ref{prop:triangulation}
therefore implies that \(G_{p,m}\) is hyperbolic for \(m\geq5\) and
\(p\geq2\).  The boundary identification follows from
\cite{Dran:1997,Dran:1999}.
\begin{prop}\label{prop:group}
For \(m\geq5\) and \(p\geq2\), the Gromov boundary of \(G_{p,m}\) is
homeomorphic to the Pontryagin surface \(\Pi_p\).
\end{prop}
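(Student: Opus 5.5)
The proof will reduce Proposition~\ref{prop:group} to Dranishnikov's theorem on boundaries of right-angled Coxeter groups. Proposition~\ref{prop:triangulation} supplies the hypothesis: \(\mathcal T_{p,m}\) is a flag-no-square triangulation of \(X_p\) for \(m\geq5\). By Moussong's criterion \cite{Mou}, \(G_{p,m}\) is then Gromov hyperbolic. Its Davis complex \(\Sigma_{p,m}\) is a CAT(0), indeed CAT(\(-1\)) after a suitable piecewise hyperbolic metrization, contractible complex on which \(G_{p,m}\) acts properly and cocompactly. Hence \(\partial G_{p,m}\) is the visual boundary of \(\Sigma_{p,m}\).

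The argument has three steps.

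\textbf{First}, we recall Dranishnikov's result \cite{Dran:1997,Dran:1999}: if the nerve \(L\) of a hyperbolic right-angled Coxeter group is a flag-no-square triangulation of a \(2\)-dimensional polyhedron, then \(\partial W_L\) is the inverse limit of an inverse system of \(2\)-complexes. This system is obtained from \(L\) by repeatedly attaching copies of \(L\), minus open top simplices, to the boundaries of punctured simplices. The recipe mirrors the \(M_p(Y)\) construction from the introduction. Concretely, one uses the description \(\partial W_L\simeq\varprojlim \partial B_n\), where \(B_n\) is the union of chambers at word length \(\le n\) in the Davis complex. Passing from \(B_n\) to \(B_{n+1}\) attaches copies of the "punctured nerve" \(L\setminus\mathring\sigma\) along boundaries of \(2\)-simplices \(\sigma\).

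\textbf{Second}, we specialize to \(L=\mathcal T_{p,m}\cong X_p\). Removing an open \(2\)-disk from \(X_p\) leaves a space homotopy equivalent to, and in fact PL homeomorphic to, the mod-\(p\) Möbius band \(M_p\). Its boundary circle is the \(S_2\) of the introduction, and the core circle \(K_p\) is the image of \(\partial\mathbb D^2\). So each stage of the inverse system replaces every \(2\)-simplex \(\Delta\) of the current complex by \(M_p(\Delta)\). The bonding maps collapse \(K_p\) to a point and are homeomorphisms off \(K_p\), up to the standard reparametrization. This is exactly the system \(Y_0\leftarrow Y_1\leftarrow\cdots\) defining \(\Pi_p\). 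It is convenient to start at \(Y_0=\partial B_0\), which is a \(2\)-sphere, the boundary of a chamber's dual cell.

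\textbf{Third}, we invoke the independence statement: the homeomorphism type of the inverse limit does not depend on the auxiliary triangulations or on the exact shrinking data. For the Pontryagin construction this follows from the Bing-shrinking and Čech-cohomological characterization in \cite{APon:1990,Dran:2011}. That characterization requires the mesh of the new simplices to tend to zero in the inverse limit, which is automatic from hyperbolicity via visual metrics. It gives \(\partial G_{p,m}\cong\Pi_p\).

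The main obstacle is the second step for non-prime \(p\), together with the justification that Dranishnikov's argument applies verbatim for arbitrary \(p\geq2\). The references are phrased for primes, so I would check that nothing in the inverse-limit identification uses primality; only the degree-\(p\) attaching map matters, as Remark~\ref{remark:defP} indicates. I would also verify concretely, on \(\mathcal T_{p,m}\), that the complement of the open star of a single triangle is a regular neighbourhood of the outer red circle's image. This holds for the central triangles at \(O_c\) by the layer structure: the \(C\)-, \(B\)- and \(A\)-layers collapse radially onto the \(A\)-circle, which is attached with degree \(p\). For other triangles, homogeneity of the disk interior up to PL homeomorphism gives the same conclusion.
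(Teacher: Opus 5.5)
Your skeleton is the paper's whole argument. The paper combines Proposition~\ref{prop:triangulation} (flag-no-square), Moussong's criterion, and Dranishnikov's identification of \(\partial W_L\) with \(\Pi_p\) for a flag-no-square triangulation \(L\) of the mod-\(p\) Moore space, and uses Remark~\ref{remark:defP} to dispose of primality. Had you invoked Dranishnikov in that form, nothing would be missing. However, your first step recalls only a general inverse-limit description of \(\partial W_L\), so your identification with \(\Pi_p\) rests entirely on your second and third steps. Those steps contain false claims.

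\emph{First}, \(\partial B_0\) is not a \(2\)-sphere. A Davis chamber is the cone on the barycentric subdivision of \(L\), so its frontier is a copy of \(L\cong X_p\).

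\emph{Second}, a chamber \(wK\) with \(|w|=n+1\) meets \(B_n\) in \(\bigcup_{s\in\operatorname{In}(w)}wK_s\), and the descent set \(\operatorname{In}(w)\) can be a vertex, an edge, or a triangle of \(L\). So the passage from \(B_n\) to \(B_{n+1}\) excises regular neighbourhoods of simplices of every dimension, not just open \(2\)-simplices, and the inserted pieces are complements of such neighbourhoods.

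\emph{Third}, the homogeneity claim fails for \(p\geq3\). Take a triangle \(\sigma\) such as \(A_iA_{i+1}B_{i+1}\) or \(A_iB_iB_{i+1}\). Its boundary meets the \(A\)-circle, along which \(X_p\) is locally a book with \(p\) pages, whereas \(S_2\) is disjoint from \(K_p\) in \(M_p\). Homeomorphisms preserve the closure of the non-manifold locus, so \((X_p\setminus\mathring\sigma,\partial\sigma)\not\cong(M_p,S_2)\). For the same reason, no self-homeomorphism of \(X_p\) carries \(\sigma\) to a central triangle. (For \(p=2\) this is harmless, since \(X_2\) is a manifold.) The independence theorem you quote in your third step concerns the standard system in which every inserted piece is \((M_p,S_2)\), so it does not absorb these discrepancies.

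To close the gap, either cite Dranishnikov's theorem directly in the \(\Pi_p\) form the paper uses, or genuinely compare the two inverse systems. A genuine comparison must start from \(X_p\), not \(S^2\), and must handle the vertex, edge, and triangle attachments near the singular \(A\)-circle.
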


\begin{remark}\label{remark:defP}
Dranishnikov states the boundary identification in
Proposition~\ref{prop:group} for prime \(p\)
\cite{Dran:1997,Dran:1999}.  However, primality is not used in the
proof: the argument depends only on the degree-\(p\) attaching map and
therefore applies verbatim to every integer \(p\geq2\).  For \(p=2\),
one may alternatively invoke the tree-of-manifolds theorem of
\'{S}wi\k{a}tkowski \cite{Swiatkowski:2009}, because
\(X_2\cong\mathbb{RP}^2\).  This alternative does not apply when
\(p>2\), since the Moore space \(X_p\) is then not a manifold.
\end{remark}

\begin{figure}[h]
	\centering
    \includegraphics[width=0.8\linewidth]{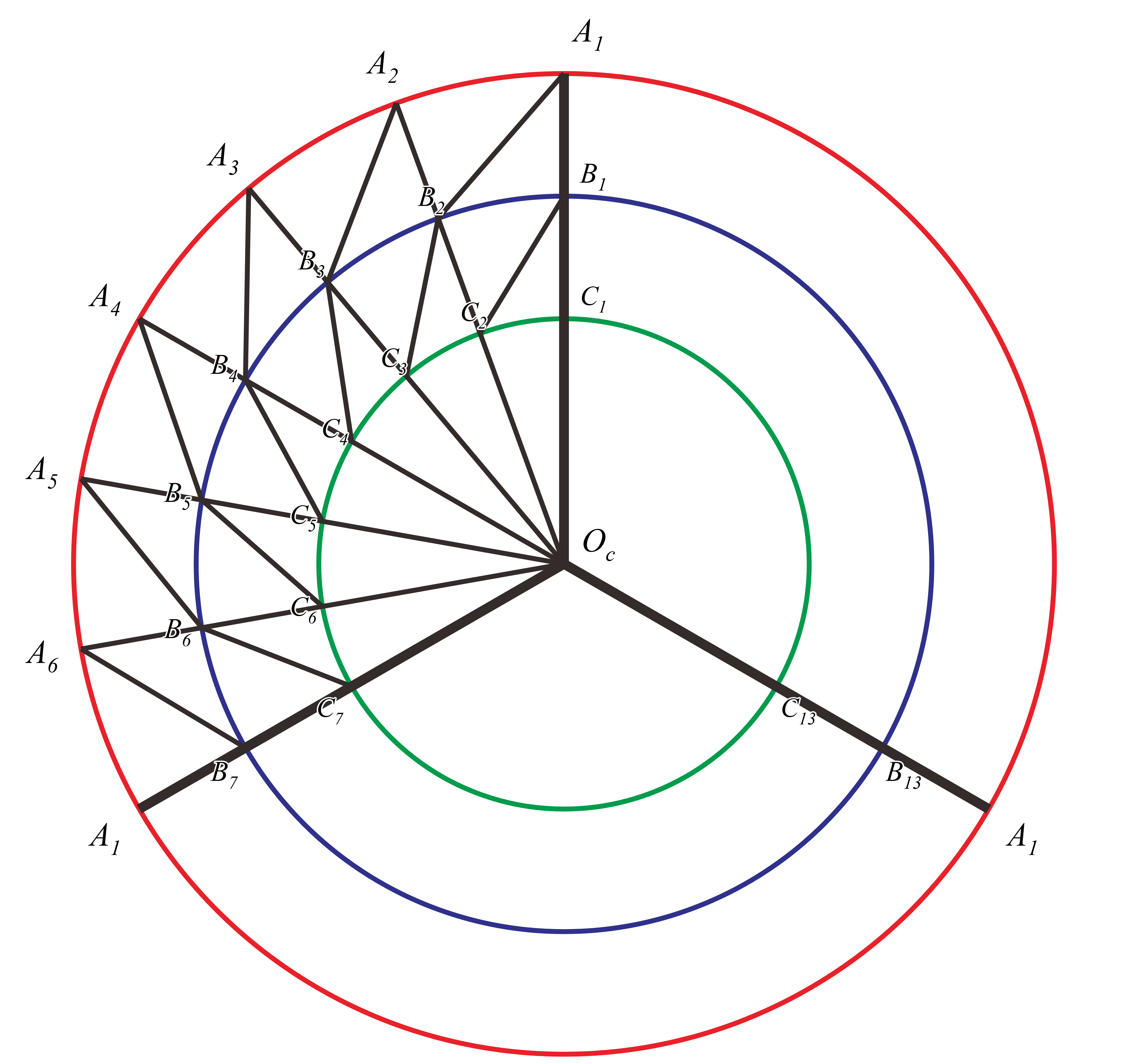}
    \caption{The flag-no-square triangulation
    \(\mathcal T_{3,6}\) of \(X_3\).}
	\label{figure:triangulation}
\end{figure}

\section{Proof of Theorem~\ref{thm:pontryaginsmallp}}
\label{sec:proofsmallp}

\subsection{The cyclic ansatz}
\label{subsec:construction}

Fix integers \(m\geq5\) and \(p\geq2\).  We seek a representation of
\(G_{p,m}\) in \(\mathbf{PO}(5,1)\) that sends each generator
\(A_i,B_j,C_k,O_c\) to reflection in a hyperplane of \(\mathbf H^5\).

We impose the \(\mathbb Z_{pm}\)-symmetry of the nerve.  Let
\[
S=\begin{pmatrix}
\cos(\frac{2  \pi}{ m}) & \sin(\frac{2 \pi}{ m}) &0 & 0    & 0 & 0\\[2 ex]
- \sin(\frac{2  \pi}{ m}) & \cos(\frac{2  \pi}{ m}) & 0 &  0& 0&0\\[2 ex]
0&0& \cos(\frac{2\pi}{p\cdot m}) & \sin(\frac{2\pi}{p\cdot m}) &0&0\\[2 ex]
0&0&- \sin(\frac{2 \pi}{p\cdot m}) &\cos(\frac{2\pi}{p\cdot m}) &0&0\\[2 ex]
0&0&0&0 &
1&0\\[2 ex]
0 & 0 &0 &0 &0&1\\
\end{pmatrix}.
\]
Then \(S\in\mathbf O(5,1)\) has order \(pm\).

Choose initial pole vectors in \(\mathbb R^{5,1}\) of the form
\[
\begin{aligned}
n(O_c)&=(0,0,0,0,1/c,1)^{\mathsf T},\\
n(A_1)&=(x_a,0,0,0,a,1)^{\mathsf T},\\
n(B_1)&=(x_b,y_b,z_b,0,b,1)^{\mathsf T},\\
n(C_1)&=(x_c,y_c,z_c,w_c,c,1)^{\mathsf T}.
\end{aligned}
\]
Thus there are eleven scalar parameters to determine.  For
\(i\in\mathbb Z\), define
\[
n(A_i)=S^{i-1}n(A_1),\qquad
n(B_i)=S^{i-1}n(B_1),\qquad
n(C_i)=S^{i-1}n(C_1).
\]
These vectors satisfy
\[
\begin{aligned}
Sn(O_c)&=n(O_c),& S^mn(A_1)&=n(A_1),\\
S^{pm}n(B_1)&=n(B_1),& S^{pm}n(C_1)&=n(C_1).
\end{aligned}
\]

For two hyperplanes \(\mathcal P_1,\mathcal P_2\subset\mathbf H^5\),
choose spacelike pole vectors \(n_1,n_2\).  The scale-invariant quantity
\begin{equation}\label{eq:Hfunction}
H(n_1,n_2)=
\frac{\langle n_1,n_2\rangle^2}
{\langle n_1,n_1\rangle\langle n_2,n_2\rangle}
\end{equation}
satisfies:
\begin{itemize}
\item \(\mathcal P_1\) and \(\mathcal P_2\) meet at angle \(\theta\) if
and only if \(H(n_1,n_2)=\cos^2\theta\);
\item \(\mathcal P_1\) and \(\mathcal P_2\) are hyperparallel if and only
if \(H(n_1,n_2)>1\).
\end{itemize}

Let \(A_i\) denote reflection in the hyperplane with pole \(n(A_i)\),
and define \(B_j,C_k\), and \(O_c\) similarly.  The prescribed
right-angled edge equations are
\begin{equation}\label{eq:edge-orthogonality}
\begin{aligned}
\langle n(A_1),n(A_2)\rangle
 &=\langle n(A_1),n(B_1)\rangle
  =\langle n(A_1),n(B_2)\rangle
  =\langle n(B_1),n(B_2)\rangle=0,\\
\langle n(B_1),n(C_1)\rangle
 &=\langle n(B_1),n(C_2)\rangle
  =\langle n(C_1),n(C_2)\rangle=0,\\
\langle n(C_1),n(O_c)\rangle&=0.
\end{aligned}
\end{equation}

\subsection[Uniform representations for m at least 5]
{Uniform representations for \texorpdfstring{\(m\geq5\)}{m at least 5}}
\label{subsec:uniform}
We now give a uniform closed-form construction for \(p=2,3\) and every
integer \(m\geq5\).  The principal point is not merely to solve the edge
equations, but to obtain estimates that are uniform in \(m\) and separate
every non-edge pair.  The case \(p=4\) will be treated separately by the
collar construction below.

\noindent For \(p=2\), set
\[
\rho_p=\frac1{16},\qquad \sigma_p=\frac1{16}.
\]
For \(p=3\), set
\[
\rho_p=\frac{17}{100},\qquad \sigma_p=\frac{13}{50}.
\]
Put \(t=2\pi/m\), and define
\begin{equation}\label{eq:R}
R_p(m)=\frac{(1-\rho_pt^2)(1+\cos t)}{2\cos t}.
\end{equation}
We shall also use the following two scalars:
\begin{equation}\label{eq:Lambda}
\begin{aligned}
\Lambda_p(m)
&=
\cos\frac t2\sqrt{(1-\rho_pt^2)(1-\sigma_pt^2)} \\
&\quad+
\frac{\cos(t/(2p))}{\cos(t/p)}
\sqrt{\bigl(1-\cos t(1-\rho_pt^2)\bigr)
      \bigl(1-\cos t(1-\sigma_pt^2)\bigr)}.
\end{aligned}
\end{equation}
Next define
\begin{equation}\label{eq:calM}
\mathcal M_p(m)
=\sqrt{R_p(m)}\,\Lambda_p(m)
 +\sqrt{R_p(m)-1}\sqrt{\Lambda_p(m)^2-1}.
\end{equation}
The eleven variables are assigned as follows:
\begin{equation}\label{eq:a-value}
a=0.
\end{equation}
\begin{equation}\label{eq:b-value}
b=
\sqrt{1-R_p(m)^{-1}},
\end{equation}
\begin{equation}\label{eq:c-value}
c=\sqrt{1-\mathcal M_p(m)^{-2}},
\end{equation}
\begin{equation}\label{eq:xa-value}
x_a=\frac{1}{\sqrt{\cos t}},
\end{equation}
\begin{equation}\label{eq:xb-value}
x_b=\sqrt{\cos t},
\end{equation}
\begin{equation}\label{eq:yb-value}
y_b=\sqrt{\frac{2\cos t}{1+\cos t}}\sin\frac t2,
\end{equation}
\begin{equation}\label{eq:zb-value}
z_b=
\sqrt{
R_p(m)^{-1}
\frac{1-\cos t(1-\rho_pt^2)}{\cos(t/p)}
},
\end{equation}
\begin{equation}\label{eq:xc-value}
x_c=
\frac{\sqrt{1-\sigma_pt^2}}{\mathcal M_p(m)}\cos t,
\end{equation}
\begin{equation}\label{eq:yc-value}
y_c=
\frac{\sqrt{1-\sigma_pt^2}}{\mathcal M_p(m)}\sin t,
\end{equation}
\begin{equation}\label{eq:zc-value}
z_c=
\frac1{\mathcal M_p(m)}
\sqrt{\frac{1-\cos t(1-\sigma_pt^2)}{\cos(t/p)}}
\cos\frac{t}{2p},
\end{equation}
and
\begin{equation}\label{eq:wc-value}
w_c=
\frac1{\mathcal M_p(m)}
\sqrt{\frac{1-\cos t(1-\sigma_pt^2)}{\cos(t/p)}}
\sin\frac{t}{2p}.
\end{equation}

\begin{prop}\label{prop:uniform-smallp}
	For \(p\in\{2,3\}\) and every \(m\geq5\), the vectors defined by
	\eqref{eq:a-value}--\eqref{eq:wc-value} form a solution of the symmetric three-layer system:
	they are spacelike, all prescribed edge inner products vanish, and every
	non-edge pair has negative Lorentzian inner product and satisfies \(H>1\).
\end{prop}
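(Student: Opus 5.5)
The plan has three stages. First I check that the proposed vectors are well defined and solve the edge equations. Then I reduce the non-edge conditions, via the symmetry \(S\), to a finite list of trigonometric inequalities in \(t=2\pi/m\in(0,2\pi/5]\). Finally I prove those inequalities uniformly in \(t\).

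\textbf{Well-definedness and edge equations.} I first check that \(\cos t>0\), that \(R_p(m)>1\), and that \(\Lambda_p(m)>1\). These guarantee that \(b\), \(c\), \(z_b\), \(\mathcal M_p(m)\) and the square roots are real, and that \(\mathcal M_p>1\). I would verify them by Taylor estimates in \(t\) with explicit remainders. For instance,
\[
R_p-1=\frac{(1-\cos t)-\rho_pt^2(1+\cos t)}{2\cos t},
\]
and this is positive because \(1-\cos t\geq t^2/2-t^4/24>2\rho_pt^2\) on \((0,2\pi/5]\) for the given \(\rho_p\).

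Next I substitute into \eqref{eq:edge-orthogonality}. The inner products reduce as follows.
\begin{itemize}
\item \(\langle n(C_1),n(O_c)\rangle=1-1=0\), since \(c\cdot(1/c)=1\).
\item \(\langle n(A_1),n(A_2)\rangle=x_a^2\cos t-1=0\).
\item \(\langle n(A_1),n(B_j)\rangle\) reduces to \(x_a(x_b\cos\cdot\pm y_b\sin\cdot)-1\).
\item \(\langle n(B_1),n(B_2)\rangle\) and \(\langle n(B_1),n(C_j)\rangle\) are the identities that define \(b\), \(z_b\) and \(\mathcal M_p\) through \(\sqrt{R}\Lambda+\sqrt{R-1}\sqrt{\Lambda^2-1}\). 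These are the hyperbolic addition formula \(\cosh(\alpha+\beta)\) in disguise.
\end{itemize}
Each of these is a direct algebraic identity, and I would record it as a lemma. Spacelikeness also reduces to simple inequalities. For example, \(\langle n(A_1),n(A_1)\rangle=1/\cos t-1>0\), and for \(B_1\) and \(C_1\) the norms are computed from \(b^2=1-R^{-1}\) and \(c^2=1-\mathcal M^{-2}\).

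\textbf{Reduction of the non-edge pairs.} Since \(S\) is an isometry, every pair is equivalent to one of the form \((X_1,Y_k)\). Here \(X,Y\in\{A,B,C,O_c\}\), and \(k\) ranges over a fundamental domain of \(\mathbb Z_{pm}\), modulo the reflection symmetry \(k\mapsto 2-k\) that is visible in the ansatz. Each inner product then has the shape
\[
\alpha\cos(kt)+\beta\sin(kt)+\gamma\cos(kt/p)+\delta\sin(kt/p)+\epsilon-1,
\]
with explicit coefficients. Negativity of this expression means \(\langle n_1,n_2\rangle<0\). The condition \(H>1\) amounts to \(\langle n_1,n_2\rangle^2>\|n_1\|^2\|n_2\|^2\). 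Both become inequalities between explicit trigonometric polynomials in the index \(k\) and in \(t\).

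For each layer pair I would locate the worst \(k\). Away from the edge indices the dominant term \(\cos(kt)\) or \(\cos(kt/p)\) is bounded away from \(1\), which gives slack of order \(1\). Near the edge indices, that is the second neighbours \(k=3\) and the antipodal-type indices \(k\approx m+1\) coming from the degree-\(p\) identification, the slack is only of order \(t^2\). It is exactly there that \(\rho_p\) and \(\sigma_p\) were tuned.

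\textbf{Uniform estimates: the main obstacle.} I expect the uniform-in-\(m\) verification of these near-edge inequalities to be the hardest part. My approach would be to expand each critical quantity in \(t\) with rigorous remainder bounds, using alternating-series bounds for \(\cos\) and \(\sin\) on \([0,2\pi/5]\). This should show that the leading \(t^2\) (or \(t^4\)) coefficient has the correct sign for the specific \(\rho_p,\sigma_p\); for example, for \(p=3\) a coefficient of the form \(c_0-c_1\rho_p-c_2\sigma_p>0\). The asymptotic argument will then handle all \(m\geq M_0\). The finitely many remaining \(m\), namely \(5\leq m<M_0\), I would check by interval arithmetic using the certification framework of Section~\ref{sec:certification-framework}.

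The far pairs, such as \(A\) with \(C\) or any vertex with \(O_c\) other than the \(C_i\), have large separation. For these I expect crude bounds to suffice, for instance \(\langle n(A_i),n(O_c)\rangle=a/c-1=-1<0\), together with the corresponding estimate for \(H\).
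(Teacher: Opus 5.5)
Your treatment of the edge equations, well-definedness and spacelikeness is sound and agrees with the paper. In particular, the \(B\)--\(C\) equations are exactly the addition formula for \(\cosh(r+s)\) with \(\cosh r=\sqrt{R_p}\) and \(\cosh s=\Lambda_p\). The non-edge part, however, has two genuine problems.

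\textbf{First, you misjudge where the difficulty lies.} \(A\)--\(C\) is not a far family that crude bounds can handle; it is the tightest family in the whole construction. The pole of \(C_2\) has first-plane argument \(0\), the same as \(A_1\). Moreover \(A_1\) and \(C_2\) have the two adjacent common neighbours \(B_1\) and \(B_2\). With \(a=0\) one gets \(\langle n(A_1),n(C_2)\rangle=\sqrt{V/\cos t}\,/\mathcal M_p(m)-1\), where \(V=1-\sigma_pt^2\). This is \(O(t^2)\), exactly like \(\sqrt{Q_AQ_C}\), so \(H>1\) is a second-order comparison. For \(p=3\), \(m=5\) it gives \(H=1.01493\ldots\), the smallest value over all non-edges, all \(m\) and both \(p\). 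As \(m\to\infty\) it rises only to about \(1.16\). This pair, together with \(B_1,B_{m+1}\) (infimum \(\approx1.020\) for \(p=3\), approached only as \(m\to\infty\)), is where the constants \(\rho_p,\sigma_p\) are really tested. By contrast, the second-neighbour indices in \(AA\), \(AB\), \(BC\) have order-one margins in \(H\). For instance \(H_{AA}(2)=(1+2\cos t)^2\) runs from \(2.618\) up to \(9\).

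\textbf{Second, your reduction to a worst index is not uniform in \(m\).} Each family has about \(pm\) non-edge indices. Your two cases are fixed near-edge \(k\), expanded in \(t\), and \(k\) with \(\cos(kt)\) bounded away from \(1\). Together they miss the growing range \(k\to\infty\), \(kt\to0\). There neither argument applies, and fixed-\(k\) remainders are not uniform: already \((\cos t-\cos kt)/(1-\cos t)\to k^2-1\) holds only pointwise in \(k\).

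The missing idea is the one the paper uses. Since \(Q_XQ_Y\) is independent of \(k\), it suffices to control the unsquared numerator. The identity \(\cos((k+1)\theta)-\cos(k\theta)=-2\sin\frac{(2k+1)\theta}{2}\sin\frac{\theta}{2}\), with \(k=jm+r\), shows that the first differences have constant sign on each half-arc. With the reflection symmetry, this reduces everything, uniformly in \(m\), to three cases:
\begin{itemize}
\item \(k=2\) for \(AA\), \(AB\), \(BC\);
\item \(k=1\) for \(AC\);
\item \(k=m\) for \(BB\), \(CC\).
\end{itemize}

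Once this reduction is in place, your handling of the resulting one-variable functions of \(t\) is a legitimate alternative to the paper's route. You propose asymptotics with explicit remainders for \(m\geq M_0\), plus interval evaluation for \(5\leq m<M_0\). The paper instead proves derivative-sign monotonicity of each \(\Phi_{XY,p}\) on \((0,2\pi/5]\) and evaluates at \(t=2\pi/5\) and \(t\to0\). Your route avoids the derivative computations but makes the proof computer-assisted, and \(M_0\) may be large where the margins are small. Also, what you need there is plain interval evaluation of explicit formulas, not the root-certification framework of Section~\ref{sec:certification-framework}.
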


\begin{proof}
	The construction is designed so that the edge equations are identities.

	First,
	\[
	\langle n(A_1),n(A_2)\rangle
	=
	\cos\frac{2\pi}{m}\,x_a^2-1
	=
	0
	\]
	by \eqref{eq:xa-value}.  Also,
	\[
	x_b=x_a^{-1}.
	\]
	Since
	\[
	x_b^2+y_b^2
	=
	\frac{2\cos(2\pi/m)}{1+\cos(2\pi/m)},
	\]
	 one obtains
	\[
	\langle n(A_1),n(B_1)\rangle=
	\langle n(A_1),n(B_2)\rangle=0.
	\]

	The equation
	\[
	\langle n(B_1),n(B_2)\rangle=0
	\]
	is equivalent to
	\[
	\cos\frac{2\pi}{m}\,
	\frac{x_b^2+y_b^2}{1-b^2}
	+
	\cos\frac{2\pi}{mp}\,
	\frac{z_b^2}{1-b^2}
	=
	1.
	\]
	Substituting \eqref{eq:b-value}, \eqref{eq:yb-value}, and \eqref{eq:zb-value} gives an identity.

	Likewise, \(\langle n(C_1),n(C_2)\rangle=0\) is equivalent to
	\[
	\cos\frac{2\pi}{m}\,
	\frac{x_c^2+y_c^2}{1-c^2}
	+
	\cos\frac{2\pi}{mp}\,
	\frac{z_c^2+w_c^2}{1-c^2}
	=
	1,
	\]
	which follows immediately from \eqref{eq:c-value}, \eqref{eq:xc-value}--\eqref{eq:wc-value}.

	It remains to check the two \(B\)-\(C\) edge equations.  By
	\eqref{eq:R}--\eqref{eq:c-value},
	\begin{equation}\label{eq:edge-lambda}
\frac{1-bc}{\sqrt{(1-b^2)(1-c^2)}}=\Lambda_p(m).
\end{equation}
	On the other hand, the Euclidean inner product of the first four coordinates of
	\(n(B_1)\) and \(n(C_1)\), divided by \(\sqrt{(1-b^2)(1-c^2)}\), is exactly
	\(\Lambda_p(m)\).  Hence
	\[
	\langle n(B_1),n(C_1)\rangle=0.
	\]
	After applying \(S\) to \(n(C_1)\), the relevant angle differences become
	\(-\pi/m\) in the first rotation plane and \(-\pi/(mp)\) in the second rotation
	plane; the cosines are unchanged.  Therefore
	\[
	\langle n(B_1),n(C_2)\rangle=0.
	\]
	Finally,
	\[
	\langle n(C_1),n(O_c)\rangle=c\cdot\frac1c-1=0.
	\]

	The well-definedness of the formulas, the spacelike inequalities, and the
	non-edge estimates are supplied by Lemma~\ref{lem:uniform-separation}
	below.  This completes the proof.
\end{proof}

For use in the uniform estimate, put
\[
\begin{aligned}
Q_A&:=\langle n(A_1),n(A_1)\rangle,
&Q_B&:=\langle n(B_1),n(B_1)\rangle,\\
Q_C&:=\langle n(C_1),n(C_1)\rangle,
&Q_{O_c}&:=\langle n(O_c),n(O_c)\rangle=\frac1{c^2}-1.
\end{aligned}
\]
By symmetry, the non-edge quantities reduce to the following eight
families.

	For \(2\le k\le m-2\), the \(A\)-\(A\) non-edge values are
	\begin{equation}\label{eq:HAA}
H_{AA}(k)
	=
	\frac{
		\left(
		x_a^2\cos\frac{2\pi k}{m}+a^2-1
		\right)^2
	}{
		Q_A^2
	}.
\end{equation}
	For \(k\in\{0,\dots,m-1\}\setminus\{0,1\}\), the \(A\)-\(B\) non-edge values are
	\begin{equation}\label{eq:HAB}
H_{AB}(k)
	=
	\frac{
		\left(
		x_a\sqrt{x_b^2+y_b^2}
		\cos\left(\frac{\pi}{m}-\frac{2\pi k}{m}\right)
		+ab-1
		\right)^2
	}{
		Q_AQ_B
	}.
\end{equation}
	For \(0\le k\le m-1\), every \(A\)-\(C\) pair is a non-edge, and
	\begin{equation}\label{eq:HAC}
H_{AC}(k)
	=
	\frac{
		\left(
		x_a\sqrt{x_c^2+y_c^2}
		\cos\left(\frac{2\pi}{m}-\frac{2\pi k}{m}\right)
		+ac-1
		\right)^2
	}{
		Q_AQ_C
	}.
\end{equation}
	For \(2\le k\le mp-2\), the \(B\)-\(B\) non-edge values are
	\begin{equation}\label{eq:HBB}
H_{BB}(k)
	=
	\frac{
		\left(
		(x_b^2+y_b^2)\cos\frac{2\pi k}{m}
		+
		z_b^2\cos\frac{2\pi k}{mp}
		+b^2-1
		\right)^2
	}{
		Q_B^2
	}.
\end{equation}
	For \(k\in\{0,\dots,mp-1\}\setminus\{0,1\}\), the \(B\)-\(C\) non-edge values are
\begin{equation}\label{eq:HBC}
\begin{aligned}
H_{BC}(k)=\frac1{Q_BQ_C}\Bigg(&
\sqrt{(x_b^2+y_b^2)(x_c^2+y_c^2)}
\cos\left(\frac{\pi}{m}-\frac{2\pi k}{m}\right)\\
&+z_b\sqrt{z_c^2+w_c^2}
\cos\left(\frac{\pi}{mp}-\frac{2\pi k}{mp}\right)
+bc-1\Bigg)^2.
\end{aligned}
\end{equation}
	For \(2\le k\le mp-2\), the \(C\)-\(C\) non-edge values are
	\begin{equation}\label{eq:HCC}
H_{CC}(k)
	=
	\frac{
		\left(
		(x_c^2+y_c^2)\cos\frac{2\pi k}{m}
		+
		(z_c^2+w_c^2)\cos\frac{2\pi k}{mp}
		+c^2-1
		\right)^2
	}{
		Q_C^2
	}.
\end{equation}
	Finally,
	\begin{equation}\label{eq:HAO}
H_{A O_c}
	=
	\frac{
		\left(a/c-1\right)^2
	}{
		Q_AQ_{O_c}
	},
\end{equation}
	and
	\begin{equation}\label{eq:HBO}
H_{B O_c}
	=
	\frac{
		\left(b/c-1\right)^2
	}{
		Q_BQ_{O_c}
	}.
\end{equation}

\begin{lemma}[Uniform separation estimate]\label{lem:uniform-separation}
For \(p\in\{2,3\}\) and \(m\geq5\), all the radicals in
\eqref{eq:R}--\eqref{eq:wc-value} are real and
\(Q_A,Q_B,Q_C,Q_{O_c}>0\).  Moreover, every non-edge pair has negative
Lorentzian inner product and satisfies \(H>1\).  The smallest
separation in the entire family occurs for \(p=3,m=5\) in the
\(A\)-\(C\) family, where
\[
 H_{AC}(1)=1.0149343201\ldots.
\]
\end{lemma}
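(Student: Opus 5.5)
The plan is to split the verification into a finite range of small \(m\) and an asymptotic regime \(m\geq m_0\), handled by Taylor expansion in \(t=2\pi/m\).

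\textbf{Step 1: well-definedness.} I would first check that every radicand is positive.
\begin{itemize}
\item \(\cos t>0\) because \(m\geq5\).
\item \(R_p(m)>1\). For small \(t\), \(R_p=(1-\rho_pt^2)(1+\cos t)/(2\cos t)=1+(1/4-\rho_p)t^2+O(t^4)\), and \(\rho_p<1/4\) for both \(p=2,3\). For \(m\) in a finite initial range this is checked directly.
\item \(1-\cos t(1-\rho_pt^2)>0\) and \(1-\cos t(1-\sigma_pt^2)>0\). Both are immediate, since \(\cos t\,(1-\rho t^2)<1\).
\item \(\Lambda_p(m)>1\). Expanding, \(\Lambda_p=1+\kappa_pt^2+O(t^4)\). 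The first term contributes \(-(1/8+\rho_p/2+\sigma_p/2)t^2\). The second term contributes \(\sqrt{(1/2+\rho_p)(1/2+\sigma_p)}\,t^2\) at leading order. So \(\kappa_p>0\) reduces to an explicit numerical inequality in \(\rho_p,\sigma_p\).
\end{itemize}
Hence \(\mathcal M_p>1\), so \(b,c\in(0,1)\) and every radical is real.

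Then \(Q_A=x_a^2-1=1/\cos t-1>0\) and \(Q_{O_c}=1/c^2-1>0\). \(Q_B\) and \(Q_C\) reduce via the edge identities to expressions like \((x_b^2+y_b^2+z_b^2)-(1-b^2)\). Using \(\cos(t/p)<1\), these are positive because the corresponding edge equation forces the norm to exceed the weighted sum.

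\textbf{Step 2: uniform estimates.} For each of the eight families \eqref{eq:HAA}--\eqref{eq:HBO} I would write the inner product itself, not \(H\), as an explicit function of \(t\) and the cyclic offset \(k\).
\begin{itemize}
\item For \(AA\), \(BB\), \(CC\), \(AB\), \(BC\), the inner product has the form (positive constant)\(\cdot\cos(\text{angle})+\cdots-(\text{const})\). It vanishes exactly at the edge offsets and is decreasing in \(|\text{angle}|\) on \([0,\pi]\). So the extremal non-edge case is the nearest non-neighbor offset, for instance \(k=2\) for \(AA\) and the analogous offsets for the others.
\item Negativity of the inner product then follows by monotonicity of cosine. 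In the two-plane families \(BB\), \(CC\), \(BC\), I would need a short argument that the sum of two cosine terms is maximized at the nearest non-edge offset. The \(p\)-fold slower second rotation may create a competing near-maximum at \(k\approx m\), i.e.\ the antipodal copy of the outer circle, so that offset must be checked separately.
\item The \(H>1\) inequality is then \(|\langle n_1,n_2\rangle|>\sqrt{Q_1Q_2}\) at those critical offsets. These become finitely many explicit one-variable inequalities in \(t\in(0,2\pi/5]\).
\item The \(O_c\) pairs are explicit. For \(B\)--\(O_c\) we have \(b<c\), which should follow from \(\mathcal M_p^2>R_p\).
\end{itemize}

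\textbf{Step 3: closing the argument.} I would expand each reduced inequality in \(t\) and show the leading coefficient is positive, which handles \(m\geq m_0\) once the Taylor remainder is bounded explicitly. The remaining \(5\leq m<m_0\) would be verified by interval arithmetic, which also produces the stated minimum \(H_{AC}(1)\approx1.01493\) at \(p=3,m=5\).

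The main obstacle is the \(AC\) family and the cross-plane families. There the margin \(H-1\) is small; at \(m=5\) it is only about \(0.015\). The leading Taylor terms may also cancel, forcing expansion to order \(t^4\) with careful remainder control. My first attempt would be to fix \(m_0\) moderately large (around \(20\)) so that crude remainder bounds suffice, and to push the burden onto certified finite computation for smaller \(m\).
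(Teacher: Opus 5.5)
Your route (Taylor expansion with explicit remainders for \(m\ge m_0\), certified interval arithmetic for \(5\le m<m_0\)) is a legitimate alternative in principle. Your Step~1 is sound: the positivity argument for \(Q_B,Q_C\), from the edge equation \(\cos t\,(x_b^2+y_b^2)+\cos\frac tp\,z_b^2=1-b^2\) with both cosines below \(1\), is exactly the paper's identity. There are, however, two genuine gaps.

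First, negativity does \emph{not} follow from monotonicity of cosine in the \(BB\) and \(CC\) families. At \(k=m\) the first-plane cosine returns to \(1>\cos t\), and
\[
\langle n(B_1),n(B_{m+1})\rangle=(x_b^2+y_b^2)(1-\cos t)-z_b^2\Bigl(\cos\frac tp-\cos\frac{2\pi}{p}\Bigr).
\]
Its sign is a genuine quantitative inequality. It is precisely the one whose failure for \(p\ge5\) produces the obstruction in Remark~\ref{remark:three-layer-obstruction}. In fact \(k=m\), not the nearest non-edge offset, is the extremal offset for \(BB\) and \(CC\). Likewise, \(AC\) has no edge offset at all, so negativity at \(k=1\) requires the separate inequality \(x_a\sqrt{x_c^2+y_c^2}<1\). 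You must therefore prove the signed inequality \(-\langle n(X_1),n(Y_{k+1})\rangle>\sqrt{Q_XQ_Y}\) at the critical offsets. For \(m\ge m_0\) you also still need a uniform proof that these offsets are the only candidates. The paper obtains this from \(\cos((k+1)\theta)-\cos(k\theta)=-2\sin\frac{(2k+1)\theta}{2}\sin\frac\theta2\), which shows the first differences change sign at most once on each half-arc.

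Second, Step~3 does not prove the final assertion of the lemma. Interval arithmetic on \(5\le m<m_0\) only gives the minimum over those \(m\). To conclude that \(H_{AC}(1)=1.01493\ldots\) at \((p,m)=(3,5)\) is the global minimum, you need \(H>1.0149343\), not merely \(H>1\), for every family and all \(m\ge m_0\). This is not slack. For \(p=3\), the \(BB\) values at \(k=m\) decrease to \((\frac12+3\rho_3)^2=1.0201\) as \(m\to\infty\). So that family is tightest exactly in your ``crude remainder'' regime, only about \(0.005\) above the claimed minimum.

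The paper avoids both the split at \(m_0\) and any remainder control. After reducing to one offset per family, it proves via \(\sin x<x<\tan x\) that each resulting function of \(t\) is monotone on \((0,2\pi/5]\), or has a single interior maximum. Then only \(t=2\pi/5\) and the limit \(t\to0\) need evaluating, and the minimality claim is read off by comparing those endpoint values. Apart from one interval evaluation at \((3,5)\), no computer check is needed. Similarly, \(R_p>1\) is immediate from \(\tan^2\frac t2>\frac{t^2}{4}>\rho_pt^2\), and \(\Lambda_p>1\) follows from \(\Lambda_p'>0\) together with \(\Lambda_p\to1\) as \(t\to0\), again with no finite check.
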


\begin{proof}
The complete derivative reductions and rational endpoint enclosures are
recorded in the ancillary analytic note
\emph{Analytic verification of the uniform separation estimates}, supplied
with the article and archived in the project repository~\cite{HCPd}.
We give here the reductions needed to indicate the argument.

Retain \(t=2\pi/m\), and put
\[
q=\cos t,\qquad q_p=\cos\frac{t}{p}.
\]
Write \(\Lambda_p(t)\) for the expression \(\Lambda_p(m)\) in
\eqref{eq:Lambda} after this substitution.  We also abbreviate
\[
 U=1-\rho_pt^2,\qquad V=1-\sigma_pt^2,
 \qquad E_U=1-qU,\qquad E_V=1-qV.
\]
Thus \(0<t\leq2\pi/5\) and \(0<q<q_p<1\).  The two values of
\(\rho_p\) are smaller than \(1/4\), and all four parameters
\(\rho_p,\sigma_p\) are at most \(13/50\).  Since
\((13/50)(2\pi/5)^2<1\), the quantities \(U,V,E_U,E_V\) are positive.
Moreover, if
\[
 d_B=\frac{2q}{(1+q)U}=1-b^2,
\]
then
\[
 \frac{1-q}{1+q}=\tan^2\frac t2>\frac{t^2}{4}>\rho_pt^2,
\]
 and hence \(0<d_B<1\).  Differentiating \eqref{eq:Lambda}, clearing the
 positive radical factors, and using \(\sin x<x<\tan x\) on the relevant
 half-angles gives
 \[
  \Lambda_2'(t)>\frac{18}{25}t,
  \qquad
  \Lambda_3'(t)>\frac{6}{25}t
  \qquad (0<t\leq2\pi/5).
 \]
 Hence \(\Lambda_p\) is strictly increasing.  Direct expansion at the
 origin gives \(\lim_{t\to0}\Lambda_p(t)=1\), and therefore
\begin{equation}\label{eq:lambda-positive}
 \Lambda_p(t)>1.
\end{equation}
There is a useful interpretation of the definition of \(\mathcal M_p\).  Write
\[
 d_B^{-1/2}=\cosh r,
 \qquad \Lambda_p(t)=\cosh s
 \qquad (r,s>0).
\]
Then the addition formula for \(\cosh\) gives
\[
 \mathcal M_p(m)=\cosh(r+s)>1,
\]
and consequently all the radicals in the construction are real.  If
\(d_C=\mathcal M_p(m)^{-2}=1-c^2\), substitution in the Lorentzian norms gives the
particularly simple identities
\[
\begin{aligned}
 Q_A&=\frac{1-q}{q},\\
 Q_B&=d_B\left((1-q)U+\frac{1-q_p}{q_p}E_U\right),\\
 Q_C&=d_C\left((1-q)V+\frac{1-q_p}{q_p}E_V\right),\\
 Q_{O_c}&=\frac{1}{\mathcal M_p(m)^2-1}.
\end{aligned}
\]
All four quantities are therefore positive.

It remains to estimate the non-edges.  It is convenient to work before
squaring and to put
\[
 D_{XY}(k)=
 -\frac{\langle n(X_1),n(Y_{k+1})\rangle}{\sqrt{Q_XQ_Y}}.
\]
Thus \(H_{XY}(k)=D_{XY}(k)^2\), and it suffices to prove
\(D_{XY}(k)>1\).  For fixed \(t\), the denominators in
\eqref{eq:HAA}--\eqref{eq:HCC} are independent of \(k\).  Applying
\[
 \cos((k+1)\theta)-\cos(k\theta)
 =-2\sin\frac{(2k+1)\theta}{2}\sin\frac\theta2
\]
to the unsquared numerators, and writing \(k=jm+r\) with
\(0\leq r<m\), shows that their first differences have constant sign on
each half of the \(p\) arcs, with at most one change of sign.  Reflection
symmetry and comparison of the arc endpoints leave only \(k=2\) in the
\(AA\), \(AB\), and \(BC\) families, \(k=1\) in the \(AC\) family, and
\(k=m\) in the \(BB\) and \(CC\) families.  The two families involving
\(O_c\) have no angular parameter.

Let \(\Phi_{XY,p}(t)\) denote the corresponding one-variable function
after substituting the expressions for \(Q_A,Q_B,Q_C,Q_{O_c}\).
Differentiating and clearing positive radical factors gives the following
monotonicity properties.  For \(p=2\), the functions
\[
 \Phi_{AA,2},\ \Phi_{AB,2},\ \Phi_{AC,2},\ \Phi_{BC,2}
\]
are strictly decreasing, whereas \(\Phi_{AO_c,2}\) and
\(\Phi_{BO_c,2}\) are strictly increasing.  Each of
\(\Phi_{BB,2}\) and \(\Phi_{CC,2}\) has a unique critical point, which
is a strict maximum.  For \(p=3\), the functions
\[
 \Phi_{AA,3},\ \Phi_{AB,3},\ \Phi_{AC,3},\
 \Phi_{BC,3},\ \Phi_{AO_c,3}
\]
are strictly decreasing, while \(\Phi_{BB,3}\) and \(\Phi_{CC,3}\)
are strictly increasing; \(\Phi_{BO_c,3}\) has a unique critical point,
again a strict maximum.  These assertions follow directly from
\(\sin x<x<\tan x\) after the derivative numerators are simplified.

It follows that every minimum occurs at \(t=2\pi/5\), or is approached as
\(t\to0\).  The resulting endpoint calculation is summarized below.  An
entry of the form \(m\to\infty\) denotes a strict infimum, not attained
for finite \(m\).
\begingroup
\small
\[
\begin{array}{c@{\qquad}cc@{\qquad}cc}
\toprule
&\multicolumn{2}{c}{\text{minimizing regime}}
&\multicolumn{2}{c}{\text{lower bound for }D_{XY}^2}\\
\text{family}&p=2&p=3&p=2&p=3\\
\midrule
AA&(5,2)&(5,2)&2.618&2.618\\
AB&(5,2)&(5,2)&2.055&2.286\\
AC&(5,1)&(5,1)&1.506&1.014\\
BB&(5,5)&m\to\infty,\ k=m&1.559&1.020\\
BC&(5,2)&(5,2)&2.819&2.769\\
CC&(5,5)&m\to\infty,\ k=m&1.559&1.638\\
AO_c&m\to\infty&m=5&3.375&2.193\\
BO_c&m\to\infty&m\to\infty&1.500&1.494\\
\bottomrule
\end{array}
\]
\endgroup
\tablegap

As an illustration, the \(AA\) family reduces identically to
\[
 H_{AA}(k)=
 \left(\frac{q-\cos(kt)}{1-q}\right)^2,
\]
so its minimum occurs at \(k=2,m=5\), where
\[
 H_{AA}(2)=(1+2\cos(2\pi/5))^2
 =\frac{3+\sqrt5}{2}>2.618.
\]
The remaining entries are obtained in the same way by direct evaluation
of the endpoint expressions; the decimals in the table are downward
roundings.  No sampling in \(m\) is involved.  In the closest case,
direct interval evaluation gives
\[
 1.0149343<H_{AC}(1)<1.0149344
 \qquad (p=3,m=5).
\]
The first-difference calculation before squaring also gives
\(D_{XY}(k)>1\) in every family, so every corresponding Lorentzian inner
product is negative.  This proves all the assertions.
\end{proof}

\begin{remark}[Limitation of the three-layer cyclic ansatz]
\label{remark:three-layer-obstruction}
There is a simple obstruction to the standard three-layer ansatz of
Subsection~\ref{subsec:construction} for \(p\geq5\), and for \(p=4\) when
\(m\geq6\).  It is already visible in the \(A\)- and \(B\)-layers.  The
\(A_1A_2\), \(A_1B_1\), \(A_1B_2\), and
\(B_1B_2\) edge equations, together with the non-edge inequality for
\(B_1,B_{m+1}\), imply the necessary condition
\[
 2\cos\frac{2\pi}{mp}>
 \left(1+\cos\frac{2\pi}{p}\right)
 \left(1+2\cos\frac{2\pi}{m}\right).
\]
For \(p\geq5\), the right-hand side is larger than \(2\); for \(p=4\)
and \(m\geq6\), it is at least \(2\).  Both conclusions contradict the
strict upper bound \(2\cos(2\pi/(mp))<2\).

The remaining case \((p,m)=(4,5)\) is not detected by this argument.
Introducing
\[
 s=\frac{x_c^2+y_c^2}{1-c^2}.
\]
reduces the remaining \(C\)-layer and \(A\)-\(C\) conditions to an explicit
one-variable feasibility problem.  Numerical evaluation gives incompatible
upper and lower bounds for
\[
 \frac{1-bc}{\sqrt{(1-b^2)(1-c^2)}},
\]
and explains why the following collar construction is natural. 
\end{remark}

\subsection[The G453 collar]
{The \texorpdfstring{\(G_{4,5,3}\)}{G453} collar}
\label{subsection:collar}
Recall that the triangulation \(\mathcal T_{p,m}\) of the two-complex \(X_p\)
has the ordered layers
\[
 A-B-C-O_c.
\]
Here each letter other than \(O_c\) denotes a cyclic layer, consecutive
vertices within a layer are joined, and two consecutive cyclic layers are
triangulated by joining the \(i\)-th vertex of the outer layer to the
\(i\)-th and \((i+1)\)-st vertices of the inner layer.  In the \(A\)-layer
the labels are repeated with period \(m\), whereas the other cyclic layers
have period \(pm\).

For \(l\geq 0\), let \(\mathcal T_{p,m,l}\) be obtained by inserting \(l\)
additional cyclic layers between \(C\) and the cone vertex.  Thus
\[
\mathcal T_{p,m,0}=\mathcal T_{p,m}.
\]
For \((p,m,l)=(4,5,3)\), we denote the inserted layers by \(D,E,F\).
The order of all layers is therefore
\begin{equation}\label{eq:layer-order-453}
 A-B-C-D-E-F-O_c.
\end{equation}
The corresponding complex is shown in Figure~\ref{fig:triangulationadd3}.

\begin{lemma}\label{lem:collar-boundary}
The complex \(\mathcal T_{4,5,3}\) is a flag-no-square triangulation of
\(X_4\).  Consequently, \(G_{4,5,3}\) is Gromov hyperbolic and
\(\partial G_{4,5,3}\cong\Pi_4\).
\end{lemma}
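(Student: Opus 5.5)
The plan is to verify three things about \(\mathcal T_{4,5,3}\): that it triangulates \(X_4\), that it is flag, and that it is no-square. The group-theoretic conclusions then follow exactly as for \(G_{p,m}\). Hyperbolicity comes from Moussong's criterion \cite{Mou}, and the boundary identification comes from Dranishnikov \cite{Dran:1997,Dran:1999}, extended to non-prime \(p=4\) as in Remark~\ref{remark:defP}.

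\emph{Triangulation of \(X_4\).} Between \(C\) and \(O_c\), the inserted layers \(D,E,F\) each have \(pm=20\) vertices. Consecutive layers are joined by the same rule: the \(i\)-th outer vertex is joined to the \(i\)-th and \((i+1)\)-st inner vertices, and consecutive vertices within a layer are joined.

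Each band between two cyclic layers of period \(20\) is therefore a triangulated annulus. The band between \(A\) and \(B\) is unchanged from \(\mathcal T_{4,5}\), and the \(F\)-layer is coned to \(O_c\).

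The union of everything from \(B\) inward is thus a triangulated disk, exactly as in \(\mathcal T_{4,5}\) but with more concentric annuli. The outer band realizes the degree-\(4\) identification onto the \(A\)-circle, since \(A_{i+5}=A_i\). This gives a triangulation of \(X_4\), by the same reasoning as Proposition~\ref{prop:triangulation}.

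\emph{Flagness and no-square.} The argument is local and layer by layer. The key point is that an edge only joins vertices in the same layer or in adjacent layers (with \(O_c\) adjacent only to \(F\)). Also, within a period-\(20\) layer, a vertex has only two neighbours in the next inner layer, and these are consecutive.

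For flagness, a \(3\)-clique must lie in at most two adjacent layers. A \(3\)-clique inside a single cyclic layer is impossible because the cycles have length \(\ge 5\). A clique with two vertices in one layer and one in the adjacent layer is exactly a triangle of the band rule. Cliques of size \(4\) are excluded because the link of every vertex is a cycle of length \(\ge 4\).

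For no-square, take an induced \(4\)-cycle \(v_1v_2v_3v_4\). Its vertices span at most three consecutive layers. I will split into cases according to the layer pattern:
\begin{itemize}
\item all four vertices in one layer: impossible, since the layer cycle has length \(20\) (or \(5\) for \(A\));
\item layer pattern \(\ell,\ell+1,\ell,\ell+1\) around the cycle: two outer vertices sharing two inner neighbours must be consecutive, so they are adjacent, giving a diagonal;
\item pattern \(\ell,\ell,\ell+1,\ell+1\): the band rule forces a triangulated quadrilateral, hence a diagonal;
\item pattern \(\ell-1,\ell,\ell+1,\ell\): the two middle vertices have a common outer and a common inner neighbour, which forces them to be adjacent;
\item cycles through \(O_c\): the cycle is \(O_c F_i x F_j\), and \(x\) adjacent to two \(F\)'s with \(F_i,F_j\) nonadjacent is impossible once the \(E\)--\(F\) band rule is checked.
\end{itemize}
The new configurations relative to \(\mathcal T_{4,5}\) are only the uniform bands among \(C,D,E,F\), which have period \(20\ge5\), so they are handled identically.

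\emph{Main obstacle.} The delicate part is the \(A\)-layer. It has period \(m=5\) and its labels repeat four times, so each \(A_i\) has \(8\) neighbours in \(B\), lying in four separated pairs. A \(4\)-cycle \(A_i\,B_j\,x\,B_k\) with \(B_j,B_k\) in different pairs, or a cycle \(A_iB_jB_{j'}A_{i'}\), must be excluded.

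Here I would reuse verbatim the check for \(\mathcal T_{4,5}\) implicit in Proposition~\ref{prop:triangulation}. Inserting the collar layers changes nothing within distance one of the \(A\)-layer, because \(B\)'s inner neighbours remain in \(C\). Indeed, the role of \(m\ge5\) (versus \(m=3,4\)) is precisely to rule out such squares. Hence the whole verification reduces to the \(\mathcal T_{4,5}\) case plus the routine uniform bands.
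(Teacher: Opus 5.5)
Your proposal is correct and follows the paper's route: inserting the annular strips leaves the underlying space \(X_4\) unchanged, flag-no-square is a local layer-by-layer adjacency check (with the \(A\)--\(B\)--\(C\) part identical to \(\mathcal T_{4,5}\)), and Moussong together with the degree-four form of Dranishnikov's argument finishes the proof. Your version is more explicit than the paper's one-line appeal to that check, but three small slips need fixing. The link of \(A_i\) is a theta graph with four arcs, not a cycle; it is still triangle-free, so there are still no \(4\)-cliques. Your pattern list also omits induced \(4\)-cycles with three vertices in one layer; these cannot occur, because the ends of a length-two path in a layer share no neighbour in an adjacent layer other than \(O_c\), and \(O_c\) is adjacent to the middle vertex. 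That same observation covers the case \(x\in F\), which your \(O_c\) bullet overlooks.
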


\begin{proof}
Each inserted cyclic layer merely replaces the triangulated cone between
the preceding layer and \(O_c\) by one more triangulated annular strip and
a smaller cone, so the underlying PL space remains \(X_4\).  The same
local adjacency check as for Proposition~\ref{prop:triangulation} shows
that every clique is a face and every embedded four-cycle has a diagonal.
Thus the nerve is flag-no-square.  Moussong's criterion gives
hyperbolicity, and the boundary identification follows from the
degree-four form of Dranishnikov's argument described in
Remark~\ref{remark:defP}.
\end{proof}

\begin{figure}
    \centering
    \includegraphics[width=0.9\linewidth]{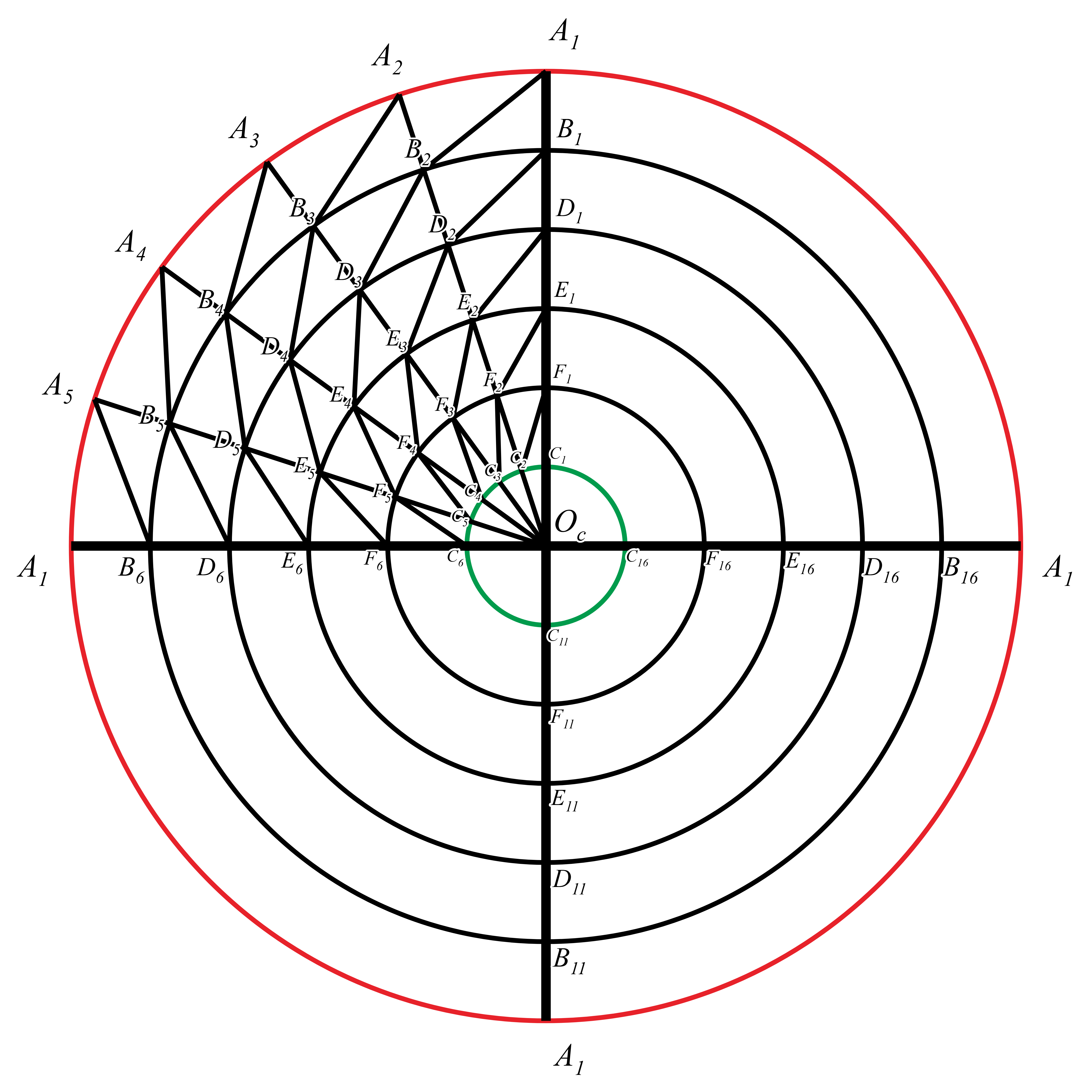}
    \caption{The flag-no-square triangulation
    \(\mathcal T_{4,5,3}\) of \(X_4\).}
    \label{fig:triangulationadd3}
\end{figure}

We denote the three inserted layers of $\mathcal{T}_{4,5,3}$ by \(D_i\),
\(E_i\), and \(F_i\), as shown in Figure~\ref{fig:triangulationadd3}.
Their initial pole vectors have the form
\[
\begin{aligned}
n(D_1)&=(x_d,y_d,z_d,w_d,d,1)^{\mathsf T},\\
n(E_1)&=(x_e,y_e,z_e,w_e,e,1)^{\mathsf T},\\
n(F_1)&=(x_f,y_f,z_f,w_f,f,1)^{\mathsf T}.
\end{aligned}
\]
For \(X_i\in\{D_i,E_i,F_i\}\) and \(1\leq i\leq20\), set
\[
 n(X_i)=S^{i-1}n(X_1).
\]
Since \(F\) is the innermost cyclic layer in
\eqref{eq:layer-order-453}, the pole of the cone reflection is
\begin{equation}\label{eq:Oc-pole-453}
 n(O_c)=(0,0,0,0,1/f,1)^{\mathsf T}.
\end{equation}
In particular, \(\langle n(F_i),n(O_c)\rangle=0\) for every \(i\).
The following decimal values approximate the certified solution described
in Proposition~\ref{prop:certified-453}; twenty digits are displayed.

\begin{center}
\begin{minipage}{0.88\textwidth}
\small
\begin{multicols}{2}
\noindent
\(a = 0\)

\(x_a = 1.79890743994786727226\)

\(b = 0\)

\(x_b = 0.55589297025142117199\)

\(y_b = 0.40387988390687641268\)

\(z_b = 0.94765814150418754522\)

\(c = 0.37871672186762506129\)

\(x_c = 0.06620923121333717465\)

\(y_c = 0.20377106091432812103\)

\(z_c = 0.92955010124102065292\)

\(w_c = 0.14722627253866626619\)

\(d = 0.63500241411989255536\)

\(x_d = -0.07722547088248804537\)

\(y_d = 0.15070963928862504633\)

\columnbreak

\(z_d = 0.75453785267906673625\)

\(w_d = 0.22099320594624878839\)

\(e = 0.81317731860166608815\)

\(x_e = -0.22245295031144918241\)

\(y_e = 0.15935754342818882822\)

\(z_e = 0.50540331391642384392\)

\(w_e = 0.27642821133104210787\)

\(f = 0.93028963143779908052\)

\(x_f = -0.17391442856664743322\)

\(y_f = -0.02416523077263075843\)

\(z_f = 0.31022447960072457765\)

\(w_f = 0.18769511896909947187\).
\end{multicols}
\end{minipage}
\end{center}
\tablegap
\begin{prop}[Certified Gram data]\label{prop:certified-453}
There is an exact collection of pole vectors in a neighborhood of the
displayed decimal data such that, for the nerve with ordered layers
\(A-B-C-D-E-F-O_c\), the following hold:
\begin{enumerate}
    \item every pole vector is spacelike;
    \item \(\langle n(X),n(Y)\rangle=0\) for every edge \(XY\) of the nerve;
    \item for every non-edge pair \(X,Y\),
    \[
       H(n(X),n(Y))>1.0627.
    \]
    \item the Gram matrix of the normalized pole vectors has non-positive
    off-diagonal entries and signature \((5,1,100)\).
\end{enumerate}
\end{prop}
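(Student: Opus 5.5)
The plan is a computer-assisted existence proof: an interval Newton (Krawczyk) argument applied to the edge equations, with every inequality then checked on the resulting small box. The $\mathbb Z_{20}$-symmetry reduces the unknowns to the displayed 26 scalar parameters. Here $a=0$ and $b=0$ are fixed, and $x_a$ is determined by $\cos(2\pi/5)x_a^2=1$. By symmetry, the edge conditions reduce to finitely many orbit representatives:
\begin{itemize}
\item the $A_1A_2$, $A_1B_1$, $A_1B_2$ and $B_1B_2$ equations;
\item for each consecutive pair of layers $X,Y$ among $B,C,D,E,F$, the equations $\langle n(X_1),n(Y_1)\rangle=\langle n(X_1),n(Y_2)\rangle=0$;
\item the equations $\langle n(X_1),n(X_2)\rangle=0$ for $X\in\{C,D,E,F\}$.
\end{itemize}
The condition $\langle n(F_i),n(O_c)\rangle=0$ holds automatically by \eqref{eq:Oc-pole-453}.

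The first step is to count equations against unknowns. If the system is underdetermined, I will fix enough coordinates to exact rationals near the displayed decimals, for instance $w_c$, $w_d$, $w_e$, $w_f$ or some of the heights, so that a square system $F(v)=0$ remains. I then run the Krawczyk test on a small box $Y$ around the displayed point $\tilde v$: with an approximate inverse $K\approx DF(\tilde v)^{-1}$, compute $\tilde v-KF(\tilde v)+(I-K\,DF(Y))(Y-\tilde v)$ in outward-rounded interval arithmetic. If this lies in the interior of $Y$, there is a unique exact zero $v^*\in Y$, and that gives the exact collection of pole vectors near the decimal data. This step is the main obstacle. The Jacobian of the reduced system may be nearly singular, because the $C$-through-$F$ layers have approximate one-parameter degeneracies, such as rotation in the $(z,w)$ plane coupled with the second rotation block. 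If the test fails I would change which coordinates are frozen, using singular values of $DF(\tilde v)$ to pick a well-conditioned complementary set, and refine $\tilde v$ by Newton iteration in high precision before shrinking $Y$.

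With $v^*$ enclosed in $Y$, items (1)--(3) are interval evaluations over $Y$. For (1), I check $\langle n(X_1),n(X_1)\rangle>0$ for each layer representative and for $O_c$; the latter holds since $f<1$. For (3), symmetry reduces the non-edge pairs to the families indexed by a layer pair and a shift $k$, exactly as in \eqref{eq:HAA}--\eqref{eq:HBO}, now including $D$, $E$, $F$ and the pairs with $O_c$. Because $m=5$ and $p=4$ are fixed, there are only finitely many shifts, at most $20$ per pair of layers, so I evaluate every representative $H$ over $Y$ and confirm that its lower endpoint exceeds $1.0627$. At the same time I check that each corresponding inner product is negative, and not merely that its square is large. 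No monotonicity argument is needed.

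For (4), I normalize each pole to $\langle n,n\rangle=1$ and form the $100\times100$ Gram matrix $G$. Its diagonal is $1$, edge entries are exactly $0$, and non-edge entries are $-\sqrt H<-1$, so the off-diagonal entries are non-positive by the sign check above. The signature follows from rank considerations: all vectors lie in $\mathbb R^{5,1}$, so $\operatorname{rank}G\le6$, and the $100-6$ nullity is forced. To obtain exactly $(5,1)$ on the nonzero part, I exhibit six of the poles whose $6\times6$ principal Gram minor has interval-certified nonzero determinant, which shows they span $\mathbb R^{5,1}$. Since $G$ is the pullback of the form $L$ under a surjective linear map $\mathbb R^{100}\to\mathbb R^{5,1}$, it then has signature exactly $(5,1,94)$. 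I will check this against the paper's stated count of $100$ vertices (given as the nullity entry $100$, which I read as the total size) and adjust the bookkeeping accordingly.
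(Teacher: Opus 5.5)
Your strategy matches the paper's proof. With \(a=b=0\), the sixteen edge equations you list are exactly the paper's sixteen. Once further coordinates are frozen at exact rationals they form a square system, and a Krawczyk inclusion gives a unique exact zero in a small box. Ball arithmetic on that box then certifies spacelikeness, the bound \(H>1.0627\) on every non-edge, and the sign of each unsquared inner product. The inertia follows from one nonvanishing \(6\times6\) determinant and Sylvester's law.

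The remaining differences are immaterial:
\begin{enumerate}
\item The paper freezes \(x_c,c,x_d,d,x_e,e,x_f,f\). Note that \(24-16=8\) coordinates must be frozen, so four such as \(w_c,w_d,w_e,w_f\) would not suffice.
\item The paper enumerates all non-edge pairs directly rather than orbit representatives. Your reduction is valid because \(S\in\mathbf O(5,1)\).
\item The paper uses the minor of the pole matrix with columns \(A_1,B_1,\dots,F_1\). This is equivalent to your principal Gram minor, since the latter equals \(-\det(M)^2\) for the \(6\times6\) matrix \(M\) of those poles.
\end{enumerate}
Your worry about a continuous degeneracy does not arise from any exact symmetry. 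The centralizer of \(S\) consists of the rotations in the two planes and the boost in the last two coordinates. It is already killed by \(y=0\) in \(n(A_1)\), \(w=0\) in \(n(B_1)\), and \(a=0\).

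The one real error is your pole count in item (4). The \(A\)-layer has period \(m=5\), so it contributes \(5\) poles. Each of \(B,C,D,E,F\) contributes \(pm=20\), and there is \(O_c\). That gives \(5+100+1=106\) poles, so \(G\) is \(106\times106\), not \(100\times100\). Your own rank argument then gives nullity \(106-6=100\), and the signature is \((5,1,100)\) exactly as stated, not \((5,1,94)\). The entry \(100\) is the nullity, not the matrix size, so the statement needs no adjustment; only your bookkeeping does. With the correct count there are \(106\cdot105/2-325=5240\) non-edge pairs, matching the number checked in the paper's certificate.
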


\begin{proof}
The verification is computer-assisted.  It uses the directed real ball
arithmetic implemented in Arb~\cite{Johansson:2017}.

The complete certificate is the versioned ancillary Python file
\path{verify_g453_arb.py}, submitted with this
article and archived with the other supplementary material~\cite{HCPd}.
It was run with \texttt{python-flint}, version~0.8.0~\cite{pythonflint}.
We describe the certificate so that the numerical part of the proof is
reproducible.
We set \(a=b=0\) and fix the eight parameters
\[
 x_c,c,x_d,d,x_e,e,x_f,f
\]
to be the terminating decimals displayed above, regarded as exact rational
numbers.  The remaining sixteen variables are
\[
 x_a,x_b,y_b,z_b,
 y_c,z_c,w_c,y_d,z_d,w_d,y_e,z_e,w_e,y_f,z_f,w_f.
\]
They are determined by sixteen edge equations: the equation
\[
 \langle n(A_1),n(A_2)\rangle=0;
\]
two equations for each of the five consecutive pairs of cyclic layers
\(A\)-\(B\), \(B\)-\(C\), \(C\)-\(D\), \(D\)-\(E\), and \(E\)-\(F\);
and one equation between consecutive poles in each of the layers
\(B,C,D,E,F\).  The \(F_iO_c\) equations are automatic from
\eqref{eq:Oc-pole-453}.

The script works with \(256\)-bit Arb balls.  Let
\(\mathbf x_0\) be the 80-digit center recorded in the script and let
\[
 \mathbf X=\mathbf x_0+[-10^{-40},10^{-40}]^{16}.
\]
Writing \(\mathcal F:\mathbb R^{16}\to\mathbb R^{16}\) for the edge-equation
map and \(C\) for a numerical inverse of \(D\mathcal F(\mathbf x_0)\), the
script verifies, with outward rounding, the strict Krawczyk
inclusion~\cite{Krawczyk:1969}
\[
 \mathbf x_0-C\mathcal F(\mathbf x_0)
 +\bigl(I-C D\mathcal F(\mathbf X)\bigr)(\mathbf X-\mathbf x_0)
 \subset \operatorname{int}(\mathbf X).
\]
Consequently, \(\mathcal F\) has a unique zero in \(\mathbf X\).

At this zero, interval evaluation gives the following conservative lower
bounds for the Lorentzian norms:
\begingroup
\small
\[
\begin{array}{c@{\qquad}ccccccc}
\toprule
X&A&B&C&D&E&F&O_c\\
\midrule
\langle n(X),n(X)\rangle
&2.2360&0.3701&0.0750&0.0500&0.0679&0.0277&0.1554\\
\bottomrule
\end{array}
\]
\endgroup
\tablegap
The script then enumerates all pairs among the \(106\) poles, using the
combinatorics in \eqref{eq:layer-order-453}.  After the prescribed edge pairs
are removed, it checks all \(5240\) non-edge pairs.  Direct ball evaluation
gives
\[
 \min_{XY\text{ a non-edge}}H(n(X),n(Y))
 >1.0627365391573188>1.0627.
\]
The same ball evaluation certifies that every non-edge inner product is
strictly negative.  Since the edge inner products vanish, the normalized
Gram matrix therefore has non-positive off-diagonal entries.

It remains to determine its inertia.  Let \(N\) be the \(6\times106\)
matrix whose columns are the (unnormalized) pole vectors.  The interval
enclosure for the minor with columns
\(A_1,B_1,C_1,D_1,E_1,F_1\) is
\[
[-0.001570436952200370241728775925
 \mathbin{\pm}3.10\cdot10^{-40}],
\]
which does not contain zero.  Hence \(\operatorname{rank}N=6\).  Column
normalization does not change the rank, so if \(U\) is the matrix of
normalized pole vectors,
then
\[
  G=U^{\mathsf T}L U
\]
has, by Sylvester's law of inertia, five positive eigenvalues, one negative
eigenvalue, and \(106-6=100\) zero eigenvalues.  Thus its signature is
\((5,1,100)\).

For reference, a high-precision numerical diagonalization gives the six
nonzero eigenvalues
\begingroup
\small
\[
\begin{array}{r@{\qquad}r@{\qquad}r}
-907.23759098,&50.34372369,&50.34372369,\\
188.72171332,&361.91421514,&361.91421514.
\end{array}
\]
\endgroup
\tablegap
These decimals are included only as a numerical check; the rigorous
signature assertion follows from the nonvanishing minor and Sylvester's
law.
\end{proof}

\subsection{Asymmetric right-angled nerves}
\label{subsection:asysub}

The obstruction in Remark~\ref{remark:three-layer-obstruction} is a
constraint on the cyclic ansatz, rather than on right-angled
realizations themselves.  We therefore keep all Coxeter angles equal to
\(\pi/2\) and replace the symmetric nerve by a suitable asymmetric one.

For \(2\leq p\leq 8\), let \(\mathcal K_p\) be the triangulation shown in
Figure~\ref{fig:asymmetric-nerves}.  Each panel represents a triangulated
disk whose equally labelled boundary vertices are identified.  The
induced attaching map has degree \(p\), and hence the quotient is
\(X_p\).  Every bounded triangular region is a two-simplex, so the
figure records the full simplicial structure, not only the
one-skeleton.

\begin{figure}[t]
  \centering
  \includegraphics[width=1.1\textwidth]{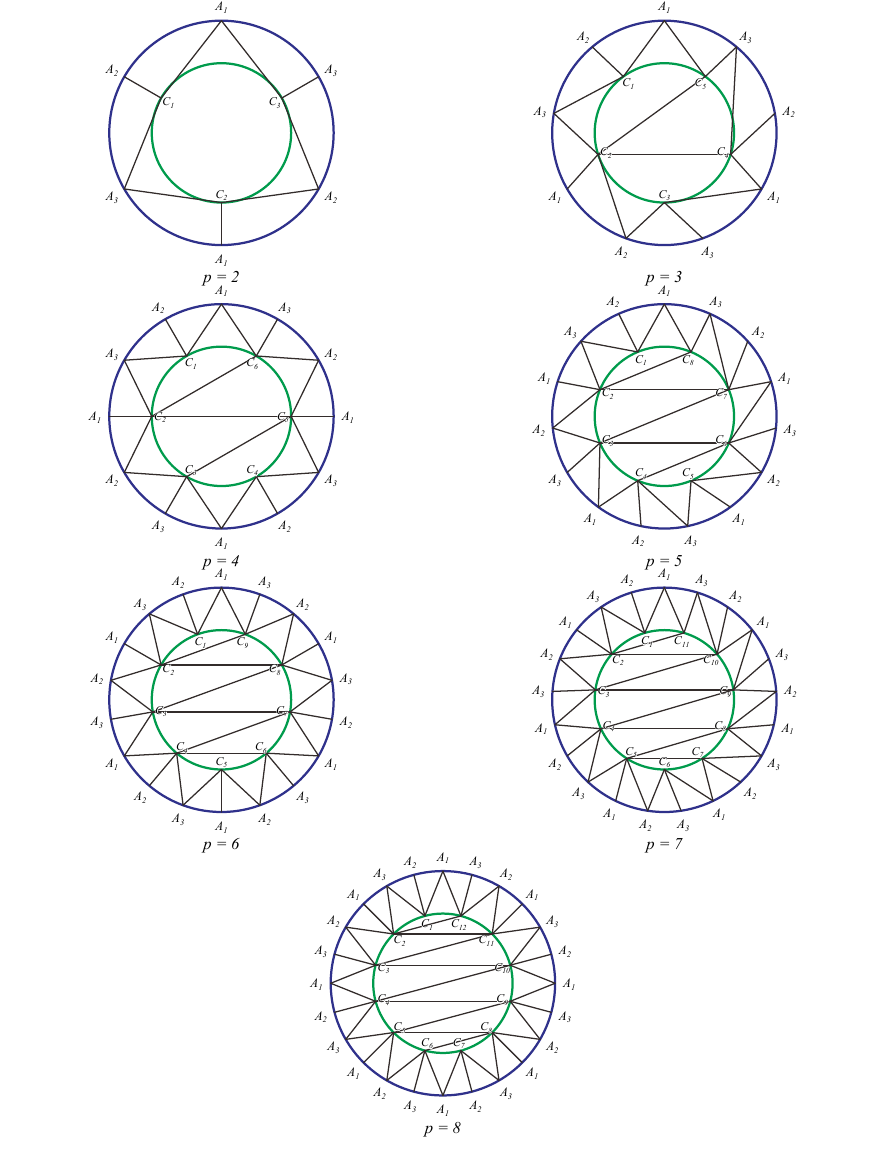}
  \caption{Triangulations \(\mathcal K_p\) of \(X_p\), for
  \(2\leq p\leq 8\).  Boundary vertices with the same label are
  identified.}
  \label{fig:asymmetric-nerves}
\end{figure}

Put \(c_p=\lceil3p/2\rceil\).  For \(2\leq p\leq8\), the face vectors
of these triangulations are
\[
 (f_0,f_1,f_2)
 =\bigl(c_p+3,\,3p+3c_p,\,3p+2c_p-2\bigr).
\]
In particular,
\(f_0(\mathcal K_p)=\lceil3p/2\rceil+3\), in agreement with
\cite[\S2.1]{LofanoLutz:2021}.

Applying Dranishnikov's special subdivision, as illustrated in
Figure~\ref{fig:Dsubdivision}, to every two-simplex of \(\mathcal K_p\) gives a
complex \(\mathcal K_p^{\#}\) with the same underlying space as
\(\mathcal K_p\).  By \cite[Proposition~2.1]{Dran:1999}, the resulting
complex is flag-no-square.  For an edge \(\{X,Y\}\) of \(\mathcal K_p\),
denote the new edge vertex by
\(e_{XY}=e_{YX}\).  For a two-simplex \(\{X,Y,Z\}\), denote its three new
face vertices by
\(
 f_{X;YZ},f_{Y;XZ},f_{Z;XY},
\)
where \(f_{X;YZ}=f_{X;ZY}\); the entry before the semicolon records the
distinguished original vertex.

\begin{figure}[H]
  \centering
  \includegraphics[width=0.7\linewidth]{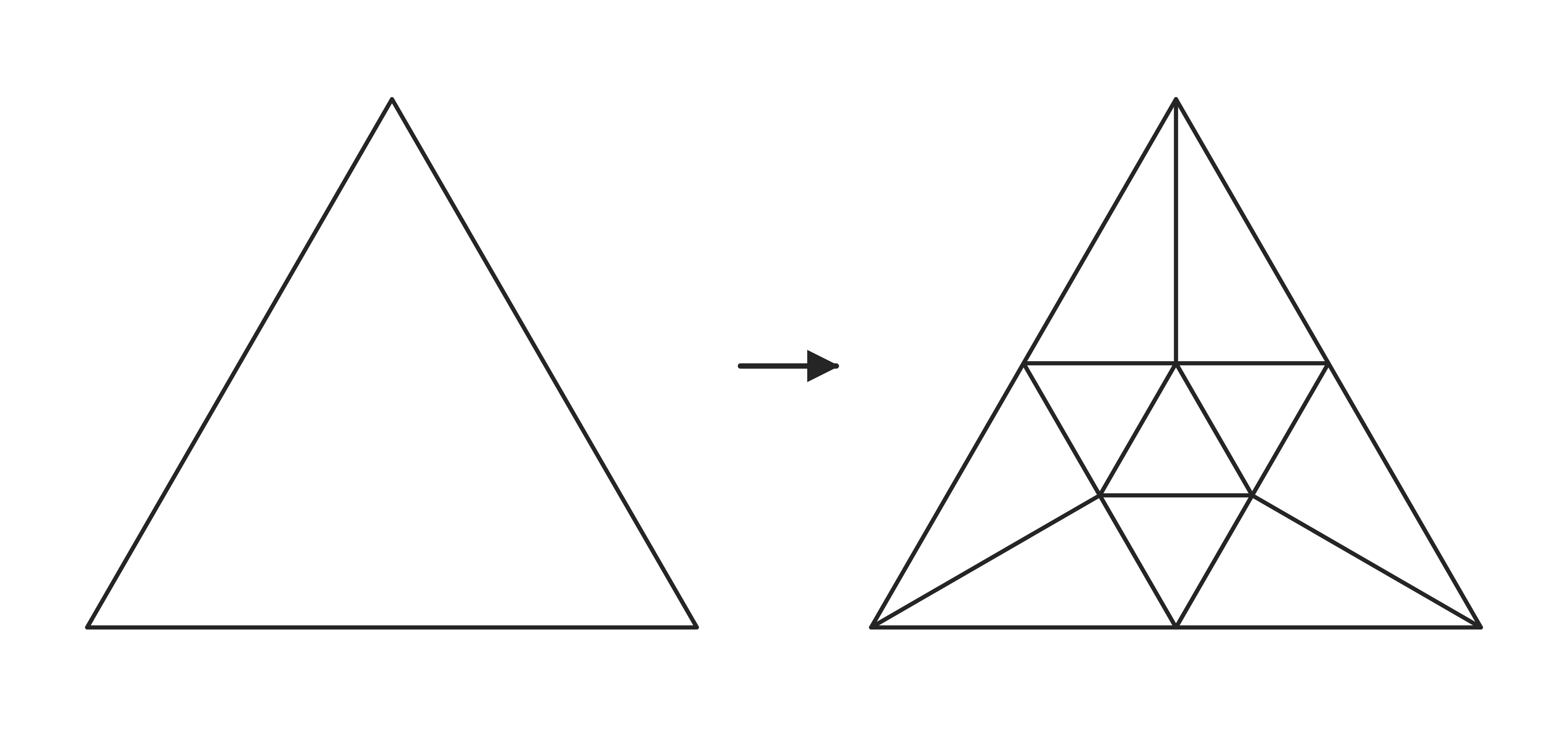}
  \caption{The local subdivision rule for a two-simplex.}
  \label{fig:Dsubdivision}
\end{figure}

For a simplicial complex \(Y\), write \(Y^{(1)}\) for its one-skeleton.
If \(\mathcal G\) is a finite graph, write \(V(\mathcal G)\) and
\(E(\mathcal G)\) for its vertex and edge sets, respectively, and let
\(W_{\mathcal G}\) denote the right-angled Coxeter group with defining
graph \(\mathcal G\).  Set
\[
 \mathcal G_p=(\mathcal K_p^{\#})^{(1)},
 \qquad
 W_p=W_{\mathcal G_p}.
\]
Since \(\mathcal K_p^{\#}\) is flag, it is precisely the nerve of
\(W_p\).

\begin{table}[H]
\centering
\small
\begin{tabular}{c*{7}{r}}
\toprule
\(p\) & 2 & 3 & 4 & 5 & 6 & 7 & 8\\
\midrule
\(\lvert V(\mathcal G_p)\rvert\)
 & 51 & 83 & 105 & 137 & 159 & 191 & 213\\
\(\lvert E(\mathcal G_p)\rvert\)
 & 150 & 252 & 324 & 426 & 498 & 600 & 672\\
\bottomrule
\end{tabular}
\caption{Numbers of vertices and edges in the defining graphs
\(\mathcal G_p\).}
\label{tab:asymmetric-graph-sizes}
\end{table}
\tablegap

\begin{prop}[Certified asymmetric pole data]
\label{prop:asymmetric-certificate}
For every \(p\in\{2,3,4,5,6,7,8\}\), there is a collection of spacelike
vectors
\[
 \{n_v\in\mathbb R^{5,1}:v\in V(\mathcal G_p)\}
\]
oriented so that
\[
 \langle O,n_v\rangle<0
 \qquad\text{for every }v\in V(\mathcal G_p),
\]
where \(O=(0,0,0,0,0,1)^{\mathsf T}\in\mathbf H^5\),
and satisfying
\[
 \langle n_v,n_w\rangle=
 \begin{cases}
  1,&v=w,\\
  0,&\{v,w\}\in E(\mathcal G_p),\\
  <-1,&\{v,w\}\notin E(\mathcal G_p).
 \end{cases}
\]
\end{prop}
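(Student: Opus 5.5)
This is an existence statement for finitely many explicit configurations, so I will prove it computationally, with the same certification scheme as Proposition~\ref{prop:certified-453}, applied separately to each \(p\in\{2,\dots,8\}\).

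\textbf{Setting up the system.} For each \(p\), I unknown a vector \(n_v\in\mathbb R^{5,1}\) for every \(v\in V(\mathcal G_p)\). The exact constraints are
\[
\langle n_v,n_v\rangle=1\quad\text{and}\quad \langle n_v,n_w\rangle=0\ \text{for edges}.
\]
By Table~\ref{tab:asymmetric-graph-sizes} this gives \(|V|+|E|\) equations in \(6|V|\) unknowns; for instance, \(201\) equations in \(306\) unknowns when \(p=2\). The system is therefore underdetermined, and I first remove the gauge freedom:
\begin{itemize}
\item fix \(\mathbf{O}(5,1)\) by normalizing \(n\) on one chosen triangle of \(\mathcal K_p^{\#}\) to a standard frame;
\item freeze enough remaining coordinates at rational values taken from a numerical approximate solution, so that the remaining unknowns exactly match the equations, as was done for \(G_{4,5,3}\).
\end{itemize}
The approximate solution comes from a numerical search. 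I will minimize a penalty combining the squared equation residuals with hinge terms \(\max(0,\,-1-\delta-\langle n_v,n_w\rangle)^2\) over non-edges. I will start from a heuristic placement: each original vertex of \(\mathcal K_p\) gets a hyperplane far out toward a distinct ideal direction, and the edge and face vertices \(e_{XY}\), \(f_{X;YZ}\) get hyperplanes near the corresponding "midpoints."

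\textbf{Certifying the equality constraints.} Given the numerical solution \(\mathbf x_0\) at high precision, I enclose it in a small box \(\mathbf X\). With Arb ball arithmetic I then verify the strict Krawczyk inclusion for the square edge-and-normalization map \(\mathcal F\). This yields a unique exact zero in \(\mathbf X\), so all the required equalities hold exactly.

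\textbf{Certifying the inequalities.} At that exact zero, ball evaluation over all non-edge pairs checks \(\langle n_v,n_w\rangle<-1\) strictly. It also checks \(\langle O,n_v\rangle<0\) for every \(v\). If the sign of some \(\langle O,n_v\rangle\) is wrong, replacing \(n_v\) by \(-n_v\) fixes it, but that would also flip the signs of its non-edge products. So the orientation must already be built into the search, through the penalty terms or an explicit constraint on the last coordinate, and I will include it there.

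\textbf{Main obstacle.} The hard step is the numerical search: finding, for each \(p\), an approximate configuration with a uniform margin \(\langle n_v,n_w\rangle<-1-\delta\) on every non-edge. Certification needs that margin to survive the enclosure radius. The number of non-edge pairs grows quadratically, reaching roughly \(22{,}000\) for \(p=8\). Two things will help:
\begin{itemize}
\item continuation in \(p\), reusing the stable combinatorial pattern of \(\mathcal K_p\) to seed \(p+1\) from \(p\);
\item a final Newton refinement onto the exact constraint set before certification.
\end{itemize}
The Krawczyk step itself is routine once the Jacobian is well conditioned. If it is not, I will change which coordinates are frozen, choosing them by QR with column pivoting on \(D\mathcal F(\mathbf x_0)\).
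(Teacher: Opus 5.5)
Your plan follows the paper's proof in outline: numerical search, then freezing a complementary set $\mathcal N$ of coordinates so that the remaining Jacobian block $F'_{\mathcal B}$ is invertible, then an interval existence test on the resulting square system, then ball evaluation of every non-edge product and of $\langle O,n_v\rangle$ over the enclosure. There are two substitutions, neither of which affects soundness, because the certificate does not depend on how the candidate was found:
\begin{itemize}
\item The paper's existence test is not Krawczyk but the Chen--Womersley criterion with $r_2=0$ (Theorem~\ref{thm:criterion}). This is a Newton--Kantorovich bound using the global Lipschitz constant $2(d+1)$, which is available because $F$ is quadratic. It needs only a rigorous inverse of $F'_{\mathcal B}(\hat\nu)$ at the center. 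Your Krawczyk test also gives local uniqueness, but requires an interval Jacobian over the whole box.
\item The paper's search is Riemannian Adam followed by ADMM on the Gram matrix, not a penalty with hinge terms.
\end{itemize}

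Two details would fail as written, though both are easy to repair.

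\emph{Gauge fixing versus orientation.} Suppose your gauge normalization puts the poles of a triangle at the standard frame $e_1,e_2,e_3$. Then $\langle O,n_v\rangle=0$ exactly for those three vertices. Your final strict test $\langle O,n_v\rangle<0$ therefore cannot succeed, and no constraint on the last coordinate can be imposed on vectors you have frozen. You have two ways out:
\begin{itemize}
\item Omit the separate gauge fixing. Invertibility of $F'_{\mathcal B}(\hat\nu)$ with $\mathcal N$ frozen already removes the $\mathbf O(5,1)$-freedom; this is what the paper does.
\item Certify $\langle x,n_v\rangle<0$ at some other point $x\in\mathbf H^5$. Then replace each $n_v$ by $gn_v$ for an isometry $g$ with $gx=O$, which preserves every Gram entry.
\end{itemize}

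\emph{Sign of the hinge term.} Your term $\max(0,-1-\delta-\langle n_v,n_w\rangle)^2$ vanishes on pairs that violate the non-edge condition. It penalizes exactly the pairs with $\langle n_v,n_w\rangle<-1-\delta$. You want $\max(0,\langle n_v,n_w\rangle+1+\delta)^2$. This does not touch the logic of the certificate, but as written the search would push the configuration away from any certifiable candidate.
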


\begin{proof}
Because no symmetry is imposed, the parameter space for the pole
configurations is considerably larger than in the cyclic constructions
above.  We therefore use the numerical-search and rigorous-certification
framework developed in Section~\ref{sec:certification-framework}.
Briefly, the approximate pole matrices and the corresponding
ball-arithmetic certificates are contained in the ancillary material
\cite{HCPd,RACGRealizer}; the complete approximate Lorentz-unit matrix
for \(p=5\) is displayed in
Appendix~\ref{appendix:asymmetric-data}.  For each \(p\),
Theorem~\ref{thm:criterion} certifies an exact zero of the diagonal and
edge equations in a neighborhood of the stored approximate matrix.
Interval evaluation throughout the same neighborhood shows that every
non-edge inner product is strictly less than \(-1\).  The same interval
bounds verify \(\langle O,n_v\rangle<0\) for every \(v\).
\end{proof}

%The preceding experiments suggest two possible directions for further constructions.  One is to abandon highly symmetric triangulations in favor of asymmetric ones, perhaps after applying a Przytycki--\'Swi\k{a}tkowski subdivision as in \cite{DLMR:2025}. Another is to proceed inductively from a small subdivision, adding pole vectors individually or in collections arising as facet normals of hyperbolic Coxeter polytopes of dimension at most five, chosen to be as symmetric as possible,while testing the required inequalities at each stage.

\subsection{Discreteness and faithfulness}\label{subsec:discrete}
We give a single geometric argument for the representations constructed
above.  It applies to \(G_{2,m}\), \(G_{3,m}\), \(G_{4,5,3}\), and
\(W_p\), and will also be used for the acute-angled realizations in
Section~4.

Let \(\mathcal S\) denote the relevant Coxeter generating set.  For
\(X\in\mathcal S\), write \(n_X\) for its spacelike pole,
\[
 q_X=\langle n_X,n_X\rangle,\qquad
 u_X=\frac{n_X}{\sqrt{q_X}},
\]
and let \(B_X=u_X^\perp\cap\mathbf H^5\) be the corresponding mirror.
The poles in each construction are oriented so that
\(\langle O,n_X\rangle<0\).  Hence, after a positive rescaling, which does
not change the corresponding reflection, every pole may be assumed to
have last coordinate \(1\).  It follows that
\(\langle O,u_X\rangle=-q_X^{-1/2}<0\).  We choose the half-space
\[
 \mathcal H_X=\{x\in\mathbf H^5:\langle x,u_X\rangle\leq0\}
\]
and define
\[
 \mathcal D=\bigcap_{X\in\mathcal S}\mathcal H_X.
\]
Thus \(O\) lies in the interior of \(\mathcal D\).

\begin{lemma}\label{lemma:intersection}
For distinct \(X,Y\in\mathcal S\), the mirrors \(B_X\) and \(B_Y\)
intersect if and only if \(\{X,Y\}\) is an edge of the Coxeter nerve.  If
that edge has weight \(k\), their dihedral angle is \(\pi/k\); if there is
no edge, the two mirrors are hyperparallel.

\end{lemma}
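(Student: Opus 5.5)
The plan is to reduce the lemma to the standard classification of the relative position of two hyperplanes in \(\mathbf H^5\) by the Gram entry of their unit poles, and then read off that entry from the data already certified for each construction. For distinct \(X,Y\in\mathcal S\), set \(g_{XY}=\langle u_X,u_Y\rangle\), so that \(H(n_X,n_Y)=g_{XY}^2\) by \eqref{eq:Hfunction}. I would first record the linear-algebra fact behind the two bullet points following \eqref{eq:Hfunction}. The span \(P=\operatorname{span}(u_X,u_Y)\) has Gram matrix \(\begin{pmatrix}1&g\\ g&1\end{pmatrix}\) with determinant \(1-g^2\). The orthogonal complement of \(P\) in \(\mathbb R^{5,1}\) therefore contains a timelike vector precisely when \(P\) is positive definite, that is, when \(|g|<1\). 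Since \(B_X\cap B_Y=P^\perp\cap\mathbf H^5\), the two mirrors meet if and only if \(|g_{XY}|<1\).

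When \(|g|<1\), the dihedral angle \(\theta\) of the wedge \(\mathcal H_X\cap\mathcal H_Y\) satisfies \(\cos\theta=-g\). I would check this in the orthogonal \(2\)-plane at a point of \(B_X\cap B_Y\), where \(u_X\) and \(u_Y\) are the outward Euclidean normals. Hence an edge of weight \(k\), with prescribed entry \(g_{XY}=-\cos(\pi/k)\) (equal to \(0\) for right-angled edges), gives mirrors meeting at angle exactly \(\pi/k\). The minus sign matters here, because it selects the angle of \(\mathcal D\) rather than its supplement. When \(|g|>1\), \(P\) is Lorentzian, so \(P^\perp\) is spacelike and the mirrors are disjoint in \(\overline{\mathbf H^5}\), that is, hyperparallel. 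The excluded case \(|g|=1\) would correspond to asymptotic mirrors.

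Next I would verify the hypotheses in each construction. For the right-angled edges, the edge equations \eqref{eq:edge-orthogonality} and their analogues give \(g=0\). For non-edges, Lemma~\ref{lem:uniform-separation} for \(G_{2,m}\) and \(G_{3,m}\), Proposition~\ref{prop:certified-453} for \(G_{4,5,3}\), and Proposition~\ref{prop:asymmetric-certificate} for \(W_p\) give \(H>1\), or \(g<-1\), which is in any case stronger than \(|g|>1\). Each of these results also certifies the negative sign of the non-edge entries, which will be needed later for acuteness of \(\mathcal D\). For the weighted groups of Section~4, I would cite the analogous edge equations \(g=-\cos(\pi/k)\), stated there, together with the non-edge inequalities. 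This shows that both directions of the lemma hold.

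The main obstacle is the orientation convention. The normalization \(\langle O,n_X\rangle<0\), used to place \(O\) in the interior of \(\mathcal D\), must be consistent with the sign convention in the angle formula. I would therefore carry out the \(2\)-plane computation explicitly with half-spaces of the form \(\langle x,u\rangle\le0\). In the hyperparallel case I would also note that \(g<-1\) rather than \(g>1\) means each mirror lies on the \(\mathcal D\)-side of the other. This is not needed for the lemma, but it confirms the configuration is the expected one.
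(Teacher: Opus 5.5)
Your proposal is correct and follows the same route as the paper. The paper reads off $\langle u_X,u_Y\rangle=-\cos(\pi/k)$ on edges and $\langle u_X,u_Y\rangle<0$, $\langle u_X,u_Y\rangle^2=H>1$ (hence $<-1$) on non-edges from the separation estimates, and then applies the standard Gram-entry classification stated after \eqref{eq:Hfunction}; your signature analysis of $\operatorname{span}(u_X,u_Y)$ and the tangent-plane check of $\cos\theta=-g$ simply make that classification explicit.
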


\begin{proof}
For an edge of weight \(k\), the construction gives
\[
 \langle u_X,u_Y\rangle=-\cos(\pi/k),
\]
so the mirrors meet at angle \(\pi/k\).  For a non-edge, the separation
estimates give
\[
 \langle u_X,u_Y\rangle<0,
 \qquad
 \langle u_X,u_Y\rangle^2=H(n_X,n_Y)>1.
\]
Thus \(\langle u_X,u_Y\rangle<-1\), which is precisely the
hyperparallel case.
\end{proof}

\begin{lemma}[Realization of the Coxeter faces]\label{lemma:connected}
Let \(T\subset\mathcal S\) be a simplex of the Coxeter nerve.  Then
\[
 \mathcal D\cap\bigcap_{X\in T}B_X
\]
contains a point that lies on no mirror \(B_Y\) with \(Y\notin T\).
Consequently, every generator determines a genuine facet of
\(\mathcal D\), and every edge of the nerve determines a genuine ridge.

\end{lemma}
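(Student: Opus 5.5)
We construct the required point explicitly from the pole data, as the orthogonal projection of \(O\) onto the face of the simplex \(T\). We use only Lemma~\ref{lemma:intersection} and the Gram-matrix data: \(\langle u_X,u_Y\rangle=-\cos(\pi/k_{XY})\) on edges (which is \(0\) in the right-angled cases) and \(\langle u_X,u_Y\rangle<-1\) on non-edges. Since the nerve is flag, \(T\) is a clique, so the Gram matrix \(G_T=(\langle u_X,u_Y\rangle)_{X,Y\in T}\) is the Gram matrix of a spherical Coxeter simplex. It is positive definite, equal to the identity in the right-angled case, and \(|T|\leq 3\). Hence \(P_T=\bigcap_{X\in T}u_X^\perp\) is a timelike subspace of codimension \(|T|\), and \(\bigcap_{X\in T}B_X\) is a nonempty totally geodesic copy of \(\mathbf H^{5-|T|}\).

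For the point itself, write \(\mathbf O=(0,\dots,0,1)^{\mathsf T}\) and set
\[
 \mathbf x_T=\mathbf O-\sum_{X\in T}\lambda_X u_X,\qquad
 \lambda=G_T^{-1}\bigl(\langle \mathbf O,u_X\rangle\bigr)_{X\in T}.
\]
Then \(\langle\mathbf x_T,u_X\rangle=0\) for \(X\in T\), and \(\langle\mathbf x_T,\mathbf x_T\rangle=-1+\lambda^{\mathsf T}G_T\lambda\). This may be nonnegative, so rather than assuming it is timelike, we argue geometrically. We work with the nearest-point projection \(x_T\) of \(O\) onto \(\bigcap_{X\in T}B_X\), which exists by the previous paragraph. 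The issue is to show \(x_T\in\mathcal D\) and that \(x_T\) lies on no other mirror. Because \(O\in\operatorname{int}\mathcal D\), it suffices to show that the geodesic segment from \(O\) to \(x_T\) crosses no mirror \(B_Y\) with \(Y\notin T\).

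The first alternative uses a partial order. We induct on \(|T|\) and replace \(O\) successively by \(x_{T'}\) for \(T'\subset T\) with \(|T'|=|T|-1\). At each step the segment from \(x_{T'}\) to \(x_T\) is orthogonal to \(\bigcap_{T'}B\) inside it. The main obstacle is excluding a crossing of \(B_Y\) for \(Y\notin T\). If \(Y\) is a non-edge of some \(X\in T\), then \(B_Y\) is hyperparallel to \(B_X\) with \(\langle u_X,u_Y\rangle<-1\). The half-spaces \(\mathcal H_X\) and \(\mathcal H_Y\) then have the property that \(\mathcal H_Y\) contains \(B_X\) entirely: the sign condition \(\langle u_X,u_Y\rangle<0\) means the two half-spaces face each other, and hyperparallelism makes the complementary sides disjoint. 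This is the standard Vinberg argument. Hence \(x_T\in B_X\subset\operatorname{int}\mathcal H_Y\).

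If instead \(Y\) is adjacent to every \(X\in T\), then by flagness \(T\cup\{Y\}\) is a simplex. In this case we apply the dimension count of \(\mathcal D\). The most robust route is Vinberg's theorem: a finite family of half-spaces whose Gram matrix has nonpositive off-diagonal entries, with parabolic and elliptic principal submatrices realized, determines an acute-angled polyhedron \(\mathcal D\). In such a polyhedron, for each simplex \(T\) the face \(\mathcal D\cap\bigcap_T B_X\) is nonempty of codimension exactly \(|T|\) whenever \(G_T\) is positive definite (Vinberg 1985, Theorem 3.1). Its relative interior then avoids every other \(B_Y\), because otherwise \(T\cup\{Y\}\) would span a face of codimension \(|T|\), contradicting the linear independence of the \(u\)'s for a positive definite Gram matrix.

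In summary, the plan is to invoke Vinberg's face theorem for acute-angled polyhedra, using nonpositivity of the off-diagonal entries (Proposition~\ref{prop:certified-453}, Proposition~\ref{prop:asymmetric-certificate}, Lemma~\ref{lem:uniform-separation}) and the nonemptiness of \(\operatorname{int}\mathcal D\) (it contains \(O\)). The explicit projection argument serves as the fallback if a self-contained proof is preferred. The facet and ridge statements are the cases \(|T|=1,2\).
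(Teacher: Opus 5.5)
Your explicit point is the right one: $\mathbf x_T=\mathbf O-\sum_{X\in T}\lambda_Xu_X$ is exactly the paper's $v_T$ (with $c=-\lambda$). It is also the nearest-point projection you switch to. You discard it because of a sign error. Since $G_T\lambda=(\langle\mathbf O,u_X\rangle)_{X\in T}$, the cross term in $\langle\mathbf x_T,\mathbf x_T\rangle$ equals $-2\lambda^{\mathsf T}G_T\lambda$. Hence $\langle\mathbf x_T,\mathbf x_T\rangle=-1-\lambda^{\mathsf T}G_T\lambda<0$, and the point is always timelike.

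The idea you then miss is that $G_T^{-1}$ is entrywise nonnegative. In the right-angled case $G_T=I$; in general $G_T=I-C_T$ with $C_T\geq0$ and $\rho(C_T)<1$, so $G_T^{-1}=\sum_jC_T^j$. Because $\langle\mathbf O,u_X\rangle=-q_X^{-1/2}<0$, this forces $\lambda\leq0$. Then for every $Y\notin T$
\[
\langle\mathbf x_T,u_Y\rangle=\langle\mathbf O,u_Y\rangle-\sum_{X\in T}\lambda_X\langle u_X,u_Y\rangle\leq\langle\mathbf O,u_Y\rangle<0,
\]
using only the nonpositivity of the off-diagonal Gram entries. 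Similarly $\langle\mathbf x_T,\mathbf O\rangle\leq-1$, so $\mathbf x_T$ lies in the same time cone as $\mathbf O$. That is the whole proof. It handles non-neighbours and common neighbours of $T$ simultaneously, so your case split and your (correct) facing-hyperparallel-half-spaces argument are unnecessary.

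What you offer instead is not a complete argument. The inductive projection scheme is never carried out, and in the common-neighbour case you fall back on Vinberg's face theorem. In the form you quote it, as a statement about the faces of an acute-angled polyhedron, it presupposes that the $B_X$ are the facets of $\mathcal D$. Applying it with $|T|=1$ to conclude that every generator determines a genuine facet is therefore circular. Your hypothesis ``with parabolic and elliptic principal submatrices realized'' is likewise an assertion about the face structure of $\mathcal D$, not something you have verified. A legitimate citation would need Vinberg's result for an arbitrary finite family of half-spaces with nonpositive Gram off-diagonals and a common interior point. The proof of that result is precisely the nonnegative-inverse computation above.
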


\begin{proof}
Let \(G_T=(\langle u_X,u_Y\rangle)_{X,Y\in T}\).  Since \(T\) is
spherical, \(G_T\) is positive definite.  Its diagonal entries are \(1\)
and its off-diagonal entries are non-positive.  Thus \(G_T^{-1}\) is
entrywise nonnegative: indeed, writing \(G_T=I-C_T\) with \(C_T\geq0\),
positive definiteness gives \(\rho(C_T)<1\), and hence
\[
 G_T^{-1}=\sum_{j=0}^{\infty}C_T^j\geq0.
\]
Set
\[
 \alpha=(q_X^{-1/2})_{X\in T},\qquad
 c=G_T^{-1}\alpha,
 \qquad
 v_T=O+\sum_{X\in T}c_Xu_X.
\]
Then \(c_X\geq0\), and for every \(X\in T\),
\[
 \langle v_T,u_X\rangle=-\alpha_X+(G_Tc)_X=0.
\]
If \(Y\notin T\), global non-positivity of the normalized Gram matrix gives
\[
 \langle v_T,u_Y\rangle
 =-q_Y^{-1/2}+\sum_{X\in T}c_X\langle u_X,u_Y\rangle
 \leq-q_Y^{-1/2}<0.
\]
Finally,
\[
\langle v_T,v_T\rangle
 =-1-2\alpha^{\mathsf T}c+c^{\mathsf T}G_Tc
 =-1-\alpha^{\mathsf T}c<0.
\]
Moreover, the last coordinate of \(v_T\) is
\[
 1+\sum_{X\in T}c_Xq_X^{-1/2}>0.
\]
After positive
normalization, \(v_T\) therefore represents the required point of
\(\mathbf H^5\).
\end{proof}

\begin{proof}[Proof of Proposition~\ref{prop:smallp}]
Proposition~\ref{prop:group} establishes the hyperbolicity and boundary
statements for \(G_{p,m}\), and Lemma~\ref{lem:collar-boundary} establishes
their counterparts for the collar group.  It remains to analyze the
reflection representations.

For \(p=2,3\), use the pole configurations of
Proposition~\ref{prop:uniform-smallp}; for \(G_{4,5,3}\), use the certified
configuration of Proposition~\ref{prop:certified-453}.  In both cases,
Lemmas~\ref{lemma:intersection} and~\ref{lemma:connected} show that
\(\mathcal D\) is a Coxeter polytope with precisely the prescribed facets
and ridges, the prescribed Coxeter angles, and non-empty interior.

Poincar\'e's polyhedron theorem \cite{Poincare:1983} (see also
Theorem~11.2 of \cite{LeeMarquis:2024}) identifies \(\mathcal D\) as a
fundamental polytope for the group generated by its facet reflections.
The resulting homomorphism from the abstract Coxeter group is therefore
faithful, and its image is discrete in \(\mathbf{PO}(5,1)\).

Finally, Lemma~\ref{lemma:intersection} shows that any two non-intersecting
facets of \(\mathcal D\) are hyperparallel, so \(\mathcal D\) has no
asymptotic pair of facets.  Theorem~4.12 of \cite{DHaglund:2013} therefore
implies that the reflection group is convex cocompact.
\end{proof}

\begin{proof}[Proof of Proposition~\ref{prop:morep}]
The complex \(\mathcal K_p^{\#}\)
is flag-no-square and has underlying space \(X_p\); hence Moussong's
criterion implies that
\(W_p\) is word-hyperbolic, while the boundary theorem of
\cite{Dran:1997,Dran:1999}, in the form explained in
Remark~\ref{remark:defP}, identifies \(\partial W_p\) with \(\Pi_p\).

For the geometric part, use the certified pole configuration of
Proposition~\ref{prop:asymmetric-certificate}.  Its orientation places
\(O\) in the interior of \(\mathcal D\), and
Lemmas~\ref{lemma:intersection} and~\ref{lemma:connected} identify
\(\mathcal D\) as a Coxeter polytope with precisely the prescribed facets,
ridges, and dihedral angles.  Poincar\'e's polyhedron theorem then gives a
discrete and faithful reflection representation of \(W_p\).  Finally, the
strict non-edge inequalities make every pair of non-intersecting facets
hyperparallel.  Thus \(\mathcal D\) has no asymptotic pair of facets, and
\cite[Theorem~4.12]{DHaglund:2013} implies convex cocompactness.
\end{proof}

\begin{proof}[Proof of Theorem~\ref{thm:pontryaginsmallp}]
For \(p\in\{2,3,4,5,6,7,8\}\), take \(G_p=W_p\) and use the representation
provided by Proposition~\ref{prop:morep}.  Convex cocompactness identifies
the limit set equivariantly with the Gromov boundary of \(W_p\), which is
homeomorphic to \(\Pi_p\).
\end{proof}

\section{Proof of Theorem~\ref{thm:pontryaginlargep}}
\label{subsec:representationlargep}

Given the triangulation \(\mathcal C_{p,m}\) of \(X_p\) shown in
Figure~\ref{figure:triangulation2level}, we associate a Coxeter group
\(J_{p,m}\) as in the right-angled case, except that some edge weights
are different from \(2\).  In the order
\[
 (A_1A_2,\ A_1C_1,\ A_1C_2,\ C_1C_2,\ C_1O_c),
\]
the weights are
\[
 (2,3,3,2,3).
\]
All other labels are obtained by the \(\mathbb Z_{pm}\)-symmetry.  The
semidirect product \(J_{p,m}\rtimes\mathbb Z_{pm}\) has the following
symmetric presentation, where \(S\) denotes the cyclic symmetry used in
Subsection~\ref{subsec:construction}:

\[\left\langle A_{1}, C_{1}, O_c, S\Bigg| \begin{array}  {c} A_1^2=C_1^2=O_c^2=S^{mp}=\text{{\rm id}},\\ [6pt]
   (A_1SA_1S^{-1})^2=(C_1SC_1S^{-1})^2=(A_1 C_1)^3\\
   \qquad{}=(A_1S C_1 S^{-1})^3=(C_1O_c)^3=\text{{\rm id}},\\[6pt]
   S^m A_1 S^{-m}=A_1,~S^{-1} O_c S=O_c
	\end{array}\right\rangle.
\]

\begin{prop}\label{prop:groupnonrrightangled}
For \(m\geq5\) and \(p\geq2\), \(J_{p,m}\) is a hyperbolic Coxeter
group whose weighted nerve has underlying complex \(X_p\).
\end{prop}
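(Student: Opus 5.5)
The plan has two parts: first check that the weighted nerve really is a triangulation of \(X_p\), then apply Moussong's hyperbolicity criterion for general Coxeter groups.

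\textbf{Underlying complex.} The triangulation \(\mathcal C_{p,m}\) is a two-layer version of \(\mathcal T_{p,m}\), with layers
\[
A-C-O_c .
\]
The outer layer \(A\) has labels of period \(m\) on \(pm\) positions. The inner cyclic layer \(C\) has period \(pm\), and \(O_c\) is the cone vertex.

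The annulus between \(A\) and \(C\) is triangulated as before: \(A_i\) is joined to \(C_i\) and \(C_{i+1}\). The disk inside \(C\) is the cone on the \(C\)-cycle. The outer identification \(A_{i+m}=A_i\) is exactly the degree-\(p\) attaching map, so the underlying space is \(X_p\).

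I will check flagness and the simplicial-complex property by the same layer-by-layer inspection as in Proposition~\ref{prop:triangulation}. For \(m\ge5\), the outer \(A\)-cycle has length at least \(5\), so no \(A_iA_jA_k\) triangle or short wrap-around creates a missing face. Since \(p\ge2\), the vertex \(A_i\) has neighbors \(C_i,C_{i+1},C_{i+m},C_{i+m+1},\dots\). These lie in distinct pairs, and the \(C\)-cycle has length \(pm\ge10\), so no extra cliques arise.

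\textbf{Hyperbolicity via Moussong.} Moussong's criterion \cite{Mou} says that a Coxeter group is hyperbolic if and only if
\begin{enumerate}
\item there is no affine special subgroup of rank \(\ge3\), and
\item there is no pair of commuting infinite special subgroups.
\end{enumerate}

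\emph{Spherical simplices.} First I list the triangles and their weight types:
\begin{itemize}
\item \(A_iA_{i+1}C_{i+1}\)-type triangles carry weights \((2,3,3)\), giving type \(A_3\);
\item \(A_iC_iC_{i+1}\) carries weights \((3,3,2)\), again type \(A_3\);
\item \(C_iC_{i+1}O_c\) carries weights \((2,3,3)\), type \(A_3\).
\end{itemize}
All of these are finite, so every simplex is spherical, and the nerve coincides with the Davis–Moussong nerve of spherical subsets.

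\emph{Affine subgroups.} The Euclidean triangle groups \((3,3,3)\), \((2,4,4)\) and \((2,3,6)\) would require a \(3\)-cycle of edges, which is necessarily a triangle of the flag complex. These triangles have already been shown spherical, so none of them occurs. Affine groups of rank \(\ge4\), such as \(\tilde A_3\) or \(\tilde B_3\), would require a \(4\)-clique or a \(4\)-cycle with prescribed labels.

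\emph{Commuting pairs and squares.} Condition (2) reduces to excluding induced squares whose opposite pairs are non-adjacent and commute (\(\tilde A_1\times\tilde A_1\)). More generally it excludes commuting infinite special subgroups, which needs a square of \(2\)-labelled edges or a join structure. I check that every embedded \(4\)-cycle has a diagonal, exactly as in the no-square verification for \(\mathcal T_{p,m}\): the combinatorics is the same after deleting the \(B\)-layer. Therefore no induced square exists at all. This already rules out \(\tilde A_1\times\tilde A_1\), the other reducible infinite pairs, and \(\tilde A_3\) and \(\tilde C_2\times\)-type cycles.

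\textbf{Main obstacle.} The delicate step is the no-square/flag check near the \(A\)-layer, because the labels there wrap with period \(m\) while \(C\) has period \(pm\). A vertex \(A_i\) is adjacent to \(p\) separated pairs \(\{C_{i+jm},C_{i+jm+1}\}\). A potential square is \(A_i\,C_{i+1}\,X\,C_{i+m}\). I must verify that any common neighbor \(X\neq A_i\) of \(C_{i+1}\) and \(C_{i+m}\) forces a diagonal. Since \(C_{i+1}\) and \(C_{i+m}\) are at \(C\)-distance \(m-1\ge4\), their only common neighbors are \(O_c\) and \(A\)-vertices. A square through \(O_c\), namely \(A_i\,C_{i+1}\,O_c\,C_{i+m}\), would be an induced square. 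I expect to exclude it only if \(A_i\) is adjacent to \(O_c\), which it is not. Hence this configuration must be checked against the actual figure: if the figure inserts the collar so that it has a diagonal, then case analysis over the residue of indices mod \(m\) using \(m\ge5\) completes the proof. This is where I would focus the careful verification.
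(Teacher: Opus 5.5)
There is a genuine gap in your treatment of Moussong's second condition. You assert that every embedded \(4\)-cycle of \(\mathcal C_{p,m}\) has a diagonal ``exactly as for \(\mathcal T_{p,m}\) after deleting the \(B\)-layer,'' and conclude that no induced square exists. That is false, and your own final paragraph exhibits the counterexample. Since \(A_i=A_{i+m}\) is joined to both \(C_{i+1}\) and \(C_{i+m}\), and \(O_c\) is joined to every \(C_j\), the cycle \(A_i\,C_{i+1}\,O_c\,C_{i+m}\) is an induced square; for \(m=5\), \(A_1C_1O_cC_6\) is another. Its diagonal \(C_1C_6\) is precisely the non-edge at which the paper's certified minimum of \(H\) is attained. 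In \(\mathcal T_{p,m}\) it is exactly the \(B\)-layer that separates the \(A\)-vertices from the cone vertex and prevents these squares. Deleting that layer therefore destroys the no-square property, and no figure will supply the diagonal you hope for. As written, your argument stops at an unresolved configuration that really is present.

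The missing idea is that Moussong's condition (2) concerns only \emph{commuting} subsets, so only cycles whose four edges all carry weight \(2\) matter. Every clique of the flag, two-dimensional nerve is a vertex, an edge, or a \((2,3,3)\) triangle, hence spherical. So every infinite special subgroup \(W_T\) contains two generators with \(m=\infty\), and two commuting infinite special subgroups would produce an induced \(4\)-cycle with all edges of weight \(2\). The weight-\(2\) edges are exactly \(A_iA_{i+1}\) and \(C_iC_{i+1}\). They form disjoint cycles of lengths \(m\geq5\) and \(pm\geq10\), which contain no \(4\)-cycle at all. The square \(A_iC_{i+1}O_cC_{i+m}\) has all four edges of weight \(3\), so it is harmless for Moussong's criterion. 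This is the paper's argument: ``an induced four-cycle whose cross-edges all have weight \(2\).''

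Relatedly, \(\tilde A_3\) and other affine special subgroups of rank \(\geq4\) are excluded because there are no \(4\)-cliques, not by any no-square property. An irreducible affine group of rank \(\geq3\) has all \(m_{st}\) finite, so it would have to span a clique.
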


\begin{proof}
The layer identifications are the same degree-\(p\) identifications used
for \(\mathcal T_{p,m}\), so the underlying complex is \(X_p\).  Every
triangle in the weighted nerve has Coxeter type \((2,3,3)\), hence is
spherical, and there is no clique of larger rank.  Thus no special
subgroup of rank at least three is affine.  Moreover, a product of two
infinite special subgroups would produce an induced four-cycle whose
 cross-edges all have weight \(2\); the layer-by-layer no-square check
excludes such a cycle.  Moussong's hyperbolicity criterion now implies
that \(J_{p,m}\) is word-hyperbolic.
\end{proof}

\begin{remark}\label{remark:nonrightangledboundary}

Theorem~B of \'{S}wi\k{a}tkowski \cite{Swiatkowski:2009} identifies the
boundary of a hyperbolic Coxeter group whose nerve is a closed
non-orientable PL manifold \(M\) with the Jakobsche space associated
with \(M\); the theorem does not require the Coxeter group to be
right-angled.  Since \(X_2\cong\mathbb{RP}^2\), it follows that
\(\partial J_{2,m}\cong\Pi_2\). For \(p>2\), however, \(X_p\) is not a manifold, so that theorem does
not apply.  Moreover, Dranishnikov's argument for
Proposition~\ref{prop:group} uses the right-angled structure.  We
conjecture that \(\partial J_{p,m}\cong\Pi_p\) for \(p=3,4\).

\end{remark}

\begin{figure}
	\centering
    \includegraphics[width=0.8\linewidth]{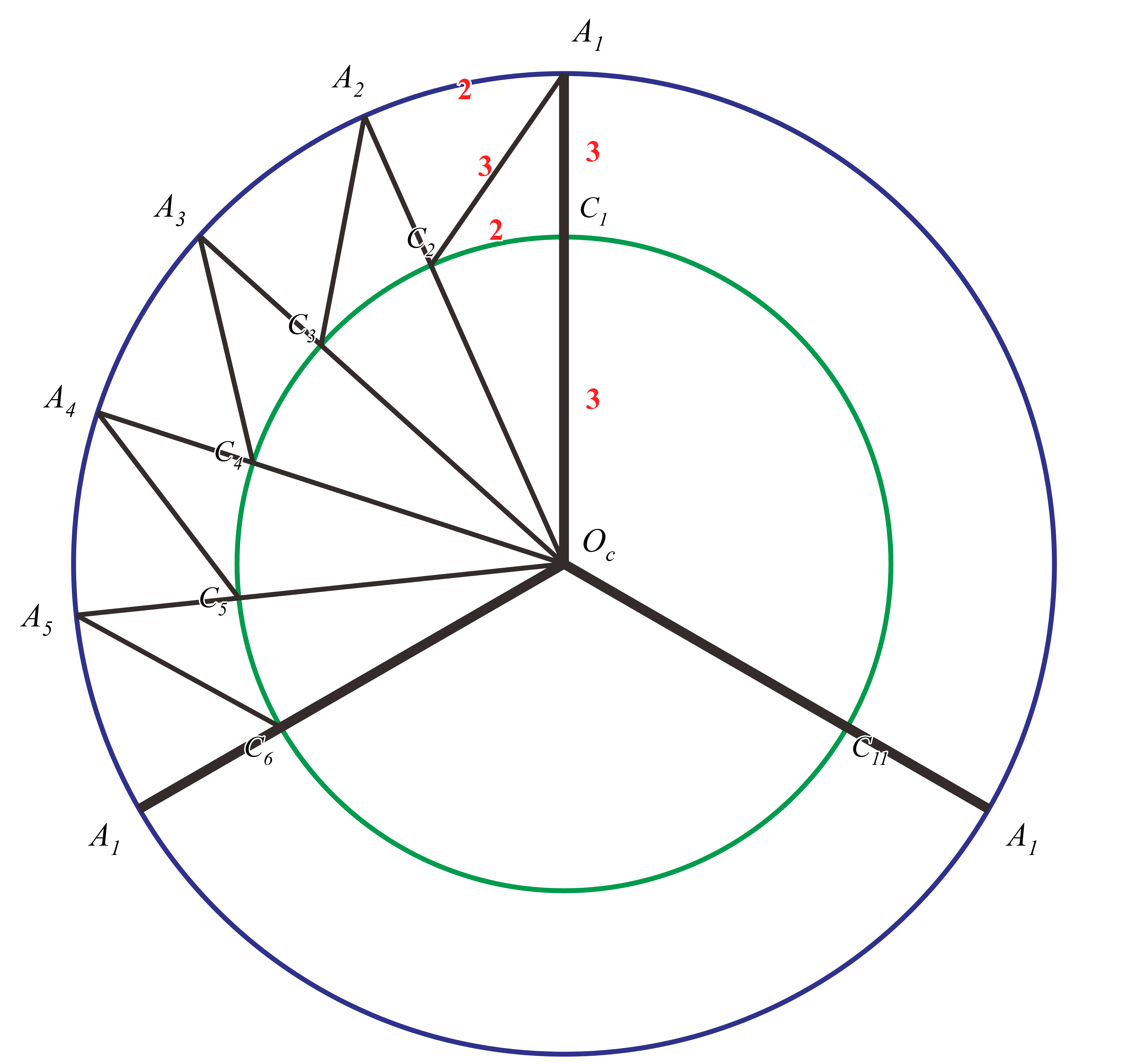}
	\caption{The weighted nerve \(\mathcal C_{3,6}\) of \(X_3\).}
\label{figure:triangulation2level}
\end{figure}

For a candidate collection of pole vectors, we must verify three types
of conditions:
\begin{enumerate}
    \item every pole vector \(n(V)\) is spacelike;
    \item if \(V,W\) are joined by an edge of weight \(k\), then
    \(H(n(V),n(W))=\cos^2(\pi/k)\);
    \item if \(V,W\) are not joined by an edge, then
    \(H(n(V),n(W))>1\).
\end{enumerate}
Here \(H\) is the scale-invariant function defined in
\eqref{eq:Hfunction}.

Notably, numerical optimization can locate possible zeros, but it proves neither the
existence of an exact zero nor the strict non-edge inequalities.  We
therefore audit each displayed candidate in two stages.  First, after fixing
the printed decimal values of \(a,c,w_c\), we apply the Krawczyk operator to
the five edge equations in the variables \(x_a,x_c,y_c,z_c,o_c\).  Here the
central pole is
\[
 n(O_c)=(0,0,0,0,o_c,1)^{\mathsf T};
\]
the coordinate \(o_c\) must be independent of \(c\), since the earlier
choice \(o_c=1/c\) would force \(H(n(C_1),n(O_c))=0\), corresponding to
weight \(2\) rather than weight \(3\).  Second, on
the resulting root enclosure we evaluate every non-edge using \(256\)-bit
Arb ball arithmetic.  The complete machine-checkable audit is contained in
the versioned ancillary Python file
\path{verify_nonright_arb.py}, submitted with this article and archived in
the supplementary repository~\cite{HCPd}.  It requires
\texttt{python-flint}, version~0.8.0~\cite{pythonflint}, and uses Arb ball
arithmetic~\cite{Johansson:2017}.  Running the file without arguments certifies
all three cases and exits successfully.

Throughout this audit \(m=5\): the rotation angles in the two Euclidean
coordinate planes are \(2\pi/5\) and \(2\pi/(5p)\), respectively.  Thus the
enumeration contains \(5\) poles of type \(A\), \(5p\) poles of type \(C\),
and the pole \(O_c\).  Figure~\ref{figure:triangulation2level} is merely a
schematic illustration of the general combinatorics.
The following values, with \(m=5\), are numerical candidates for the weight
system \((2,3,3,2,3)\).  The additional coordinate \(o_c\) is determined by
the \(C_1O_c\) angle equation.

\begingroup
\small
\[
\begin{array}{@{}lll@{}}
\begin{alignedat}{2}
p   &{}=2,\\
a   &{}=-0.6268817510226523,\\
x_a &{}=-1.401554716068,\\
c   &{}=0.5487408449899542,\\
x_c &{}=-0.647621030774,\\
y_c &{}=-0.470524220888,\\
z_c &{}=0.786829948613,\\
w_c &{}=0,\\
o_c &{}=1.278590646120
\end{alignedat}
&
\begin{alignedat}{2}
p   &{}=3,\\
a   &{}=0.48414887858672406,\\
x_a &{}=-1.574018170609,\\
c   &{}=-0.60870282888116,\\
x_c &{}=-0.565400750572,\\
y_c &{}=-0.410787690657,\\
z_c &{}=0.723766203244,\\
w_c &{}=0,\\
o_c &{}=-1.256338335087
\end{alignedat}
&
\begin{alignedat}{2}
p   &{}=4,\\
a   &{}=0.574393980597,\\
x_a &{}=1.472547825934,\\
c   &{}=-0.354053027688,\\
x_c &{}=0.563516558953,\\
y_c &{}=0.409418745315,\\
z_c &{}=0.812110133226,\\
w_c &{}=0.320143790304,\\
o_c &{}=-1.670750761584
\end{alignedat}
\end{array}
\]
\endgroup
\tablegap
\begin{prop}[Certified realization of the displayed candidates]
\label{prop:nonrightcertificate}
For each \(p\in\{2,3,4\}\), after \(a,c,w_c\) are fixed at their displayed
decimal values, the ancillary computation refines the remaining five
coordinates to a center \(x_0=(x_a,x_c,y_c,z_c,o_c)\).  There is a unique
exact solution of the five independent edge equations in the
radius-\(10^{-40}\) box about \(x_0\).  These roots realize the five
prescribed edge weights \((2,3,3,2,3)\).

\noindent For \(p=2\), all \(75\) non-edges satisfy
\[
 H>1+0.45.
\]
For \(p=3\), all \(145\) non-edges satisfy
\[
 H>1+0.10.
\]
For \(p=4\), all \(240\) non-edges satisfy
\[
 H>1+0.09.
\]
More precisely, the respective minimum intervals are
\begingroup
\small
\[
\begin{array}{c@{\qquad}c@{\qquad}c}
\toprule
p&\text{non-edge attaining the minimum}&\text{certified interval}\\
\midrule
2&C_1C_6&
[1.4569476819993073,\,1.4569476819993074]\\
3&C_1C_6&
[1.1082946366272705,\,1.1082946366272706]\\
4&C_1C_6&
[1.0929469998988836,\,1.0929469998988838]\\
\bottomrule
\end{array}
\]
\endgroup
\tablegap
In addition, for each \(p\), the Gram matrix of the normalized pole vectors
has non-positive off-diagonal entries and signature \((5,1,5p)\).
\end{prop}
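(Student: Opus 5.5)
The proof will be a computer-assisted certificate of the same form as the one for Proposition~\ref{prop:certified-453}, run in directed ball arithmetic (Arb at \(256\) bits) via the ancillary file \path{verify_nonright_arb.py}. For each \(p\in\{2,3,4\}\) we regard the displayed decimals of \(a,c,w_c\) as exact rationals and set \(m=5\). We form the pole vectors \(n(A_1)=(x_a,0,0,0,a,1)^{\mathsf T}\), \(n(C_1)=(x_c,y_c,z_c,w_c,c,1)^{\mathsf T}\) and \(n(O_c)=(0,0,0,0,o_c,1)^{\mathsf T}\), and propagate the first two by powers of \(S\).

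\textbf{The five edge equations.} We write them without squaring, choosing signs so that non-positivity of the normalized Gram matrix is built in:
\[
\langle n(A_1),n(A_2)\rangle=0,\qquad \langle n(C_1),n(C_2)\rangle=0,
\]
\[
\langle n(A_1),n(C_1)\rangle=-\tfrac12\sqrt{q_Aq_C},\qquad
\langle n(A_1),n(C_2)\rangle=-\tfrac12\sqrt{q_Aq_C},\qquad
\langle n(C_1),n(O_c)\rangle=-\tfrac12\sqrt{q_Cq_{O_c}}.
\]
These give a map \(\mathcal F\colon\mathbb R^5\to\mathbb R^5\) in the variables \((x_a,x_c,y_c,z_c,o_c)\). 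We first refine the printed candidate by Newton's method in high precision to an 80-digit center \(x_0\) and set \(\mathbf X=x_0+[-10^{-40},10^{-40}]^5\). With \(C\) a floating-point inverse of \(D\mathcal F(x_0)\), we verify the strict Krawczyk inclusion
\[
x_0-C\mathcal F(x_0)+\bigl(I-C\,D\mathcal F(\mathbf X)\bigr)(\mathbf X-x_0)\subset\operatorname{int}\mathbf X .
\]
This gives existence and uniqueness of an exact root in \(\mathbf X\). Because the equations are written unsquared with prescribed signs, the root has \(H=\cos^2(\pi/k)\) for the weights \((2,3,3,2,3)\), and every normalized edge entry equals \(-\cos(\pi/k)\leq0\). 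Before differentiating we must check, by interval evaluation on \(\mathbf X\), that \(q_A,q_C,q_{O_c}>0\), so that the square roots are well defined and the poles are spacelike.

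\textbf{Non-edge bounds.} On \(\mathbf X\) we enumerate all \(5+5p+1\) poles and remove the edges generated by symmetry: \(A_iA_{i+1}\), \(A_iC_i\), \(A_iC_{i+1}\) (with the \(A\)-labels read modulo \(5\)), \(C_jC_{j+1}\), and \(C_jO_c\). For each remaining pair we evaluate \(\langle n(X),n(Y)\rangle\) and \(H\) in ball arithmetic. We certify that every inner product is strictly negative and that \(H\) exceeds the stated thresholds, and we record the minimizing pair (expected to be \(C_1C_6\)) with its enclosure. The non-edge counts \(75\), \(145\), \(240\) serve as a consistency check on the enumeration. Negativity on non-edges, combined with the edge entries above, gives non-positive off-diagonal entries for the normalized Gram matrix.

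\textbf{Signature.} We take six poles, for example \(A_1,A_2,C_1,C_2,C_3,O_c\), and certify that the \(6\times6\) determinant enclosure excludes \(0\). Then the pole matrix \(N\) has rank \(6\), and by Sylvester's law the Gram matrix \(U^{\mathsf T}LU\) has inertia \((5,1,(5+5p+1)-6)=(5,1,5p)\). If the chosen minor degenerates, we try another choice of six poles.

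\textbf{Main obstacle.} I expect the main difficulty to be the Krawczyk step. The weight-\(3\) equations involve \(\sqrt{q_Aq_C}\), so differentiating them on \(\mathbf X\) requires enclosing the derivative of the square roots; this is fine once positivity of the \(q\)'s is certified. Conditioning of \(D\mathcal F(x_0)\) is also a concern, since a small determinant would inflate \(C\). If the inclusion fails at radius \(10^{-40}\), I would first increase the precision of the center and shrink the radius. The non-edge checks should then be routine, given their margins of at least \(0.09\).
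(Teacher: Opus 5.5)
Your proposal is correct and follows essentially the same route as the paper: a Krawczyk inclusion on the radius-\(10^{-40}\) box for the five symmetry-reduced edge equations, Arb enumeration of all non-edges (your edge list correctly reproduces the counts \(75,145,240\)) together with a strict negativity check on their inner products, and the nonvanishing \(A_1,A_2,C_1,C_2,C_3,O_c\) minor combined with Sylvester's law to get the inertia \((5,1,5p)\). The one difference is minor. The paper uses the squared weight-\(3\) angle equations, so no square roots appear, and checks the sign \(\langle n(X),n(Y)\rangle<0\) on the box afterwards; you build the sign into unsquared equations, which requires certifying \(q_A,q_C,q_{O_c}>0\) on \(\mathbf X\) before differentiating, and once those inner products are certified negative on \(\mathbf X\) the two systems have the same zeros there.
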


\begin{proof}
Let \(F_p\colon\mathbb R^5\to\mathbb R^5\) consist of the
\(A_1A_2\) orthogonality equation, the squared \(A_1C_1\) and \(A_1C_2\)
angle equations, the \(C_1C_2\) orthogonality equation, and the squared
\(C_1O_c\) angle equation.  With
\(x_0\) equal to the refined decimal center and \(X=x_0+[-10^{-40},
10^{-40}]^5\), the ancillary computation verifies
\[
x_0-CF_p(x_0)+(I-CDF_p(X))(X-x_0)\subset\operatorname{int}(X),
\]
where \(C\) is a high-precision numerical approximation to
\(DF_p(x_0)^{-1}\).  The Krawczyk theorem~\cite{Krawczyk:1969} therefore gives a unique zero
in \(X\).  Arb evaluation on \(X\) verifies spacelikeness and the
required signs of the angle equations, and then enumerates all non-edges.
The rigorous lower bounds stated above prove the required strict separation
with \(\delta=0.45,0.10,0.09\), respectively.

The same interval evaluation verifies the sign before squaring: every
non-edge Lorentzian inner product is strictly negative.  On an edge the
normalized inner product is \(-\cos(\pi/k)\), where \(k\) is its weight.
Thus every off-diagonal entry of the normalized Gram matrix is
non-positive.

For the signature, let \(N_p\) be the \(6\times(5p+6)\) matrix of
unnormalized pole vectors.  The \(6\times6\) minor with columns
\(A_1,A_2,C_1,C_2,C_3,O_c\) is enclosed by
\begingroup
\small
\[
\begin{array}{c@{\qquad}c}
\toprule
p&\text{certified determinant interval}\\
\midrule
2&[-1.104069531291515701202126173267
   \mathbin{\pm}6.07\cdot10^{-39}]\\
3&[0.636027725134980401929890086905
   \mathbin{\pm}3.09\cdot10^{-39}]\\
4&[0.881146830017736918069741440403
   \mathbin{\pm}7.31\cdot10^{-39}]\\
\bottomrule
\end{array}
\]
\endgroup
\tablegap
None of these intervals contains zero, so \(\operatorname{rank}N_p=6\).
If \(U_p\) is obtained by normalizing the columns, then
\[
 Q_p=U_p^{\mathsf T}L U_p.
\]
Sylvester's law of inertia therefore gives five positive eigenvalues, one
negative eigenvalue, and \((5p+6)-6=5p\) zero eigenvalues.

For comparison, the six nonzero eigenvalues obtained by high-precision
numerical diagonalization are
\begingroup
\small
\[
\begin{array}{c@{\qquad}rrr}
\toprule
p&\multicolumn{3}{c}{\text{nonzero eigenvalues}}\\
\midrule
2&-20.02437674& 5.51760220& 5.51760220\\
 &  6.33102390& 9.32907423& 9.32907423\\[2pt]
3&-31.61063369& 5.70717524&10.26377867\\
 & 10.26377867&13.18795055&13.18795055\\[2pt]
4&-52.31598203& 4.12446136&16.64134978\\
 & 16.64134978&20.45441055&20.45441055\\
\bottomrule
\end{array}
\]
\endgroup
\tablegap
As above, these decimal eigenvalues are reported only for transparency;
the signature \((5,1,5p)\) is certified by the nonzero minor.
\end{proof}

\begin{proof}[Proof of Theorem~\ref{thm:pontryaginlargep}]
For \(p\in\{2,3,4\}\), set \(J_p=J_{p,5}\).
Proposition~\ref{prop:groupnonrrightangled} supplies the abstract hyperbolic
Coxeter groups.  Proposition~\ref{prop:nonrightcertificate} certifies
spacelike poles, all prescribed dihedral angles, and strict
hyperparallelism for every non-edge.
The mirror-intersection and face-realization arguments of
Lemmas~\ref{lemma:intersection} and~\ref{lemma:connected} therefore apply
to the resulting acute-angled Coxeter polytope.  Poincar\'e's theorem gives
a discrete, faithful reflection representation.  Convex cocompactness then
follows as in the proof of Theorem~\ref{thm:pontryaginsmallp}.
\end{proof}

\section{Numerical search and rigorous certification}
\label{sec:certification-framework}
The finite constructions in
Propositions~\ref{prop:certified-453},
\ref{prop:nonrightcertificate}, and
\ref{prop:asymmetric-certificate} have different combinatorial origins,
but they share a common computational structure: a floating-point search
produces an approximate pole configuration, after which interval
arithmetic certifies a nearby exact solution and all strict separation
inequalities.  This section formulates that procedure for right-angled
Coxeter groups.  The weighted systems of Section~4 are handled by the
same principle, with the zero edge inner products replaced by the
prescribed Coxeter-angle equations.

Let \(\mathcal G\) be a finite graph on \(N\) vertices, and let
\(W_{\mathcal G}\) be its right-angled Coxeter group, as in
Subsection~\ref{subsection:group}.  For a target dimension \(d\), set
\[
 L_d=\operatorname{diag}(1,\ldots,1,-1).
\]
Thus \(L_5=L\) in the notation of Section~\ref{sec:preliminaries}.  In
this section, fix an ordering
\(V(\mathcal G)=\{v_1,\ldots,v_N\}\) and write
\(\nu_i=n_{v_i}\).  Thus the matrix \(\nu\) below has the normalized pole
vectors as columns; the implementation stores its transpose, with
\(\nu_i^{\mathsf T}\) as the \(i\)-th row.

\begin{definition}
A \emph{\(d\)-dimensional normal matrix} for \(\mathcal G\) is a matrix
\[
 \nu=(\nu_1\ \cdots\ \nu_N)
 \in\operatorname{Mat}_{(d+1)\times N}(\mathbb R)
\]
whose columns satisfy
\[
 \begin{cases}
  \nu_i^{\mathsf T}L_d\nu_j = 1,&i=j,\\
  \nu_i^{\mathsf T}L_d\nu_j = 0,&\{i,j\}\in E(\mathcal G),\\
  \nu_i^{\mathsf T}L_d\nu_j < -1,&i\ne j\text{ and }\{i,j\}\notin E(\mathcal G).
 \end{cases}
\]
\end{definition}

For the geometric applications below, we also require a compatible
orientation: there must be a point \(O_d\in\mathbf H^d\) such that
\[
 \langle O_d,\nu_i\rangle<0
 \qquad(1\leq i\leq N).
\]
For the flag-no-square nerves used above, a normal matrix with such an
orientation is precisely the algebraic input required by
Lemmas~\ref{lemma:intersection} and~\ref{lemma:connected}.  The geometric
argument of Subsection~\ref{subsec:discrete} then produces the discrete,
faithful, convex cocompact reflection representation.  Accordingly, the
task here is only to find and certify the normal matrix; the geometric
argument will not be repeated.

We call the resulting two-stage pipeline the \emph{RACG hyperbolic
realization-and-certification procedure} (RACG-HRCP).  A successful run
returns a rigorous certificate.  An unsuccessful run is merely
inconclusive and gives no obstruction to the existence of a
realization.  All applications in this paper have \(d=5\).

\subsection{Numerical search}\label{subsec:NS}

Since floating-point calculation does not guarantee exact satisfaction
of the equality constraints
\[
\nu_i^{\mathsf T} L_d \nu_i=1
\quad\text{and}\quad
\nu_i^{\mathsf T} L_d \nu_j=0,
\]
we first search for an approximate solution \(\hat \nu\), i.e.,
\[
    \begin{cases}
        {\hat \nu}_i^{\mathsf T} L_d {\hat \nu}_j \approx 1 & \text{if } i = j\\
        {\hat \nu}_i^{\mathsf T} L_d {\hat \nu}_j \approx 0 & \text{if } \{i,j\} \in E(\mathcal G) \\
        {\hat \nu}_i^{\mathsf T} L_d {\hat \nu}_j < -1 & \text{if } i \neq j \text{ and } \{i,j\} \notin E(\mathcal G)
    \end{cases}.
\]
This serves as the input to the certified verification step.

\subsubsection{Riemannian optimization}\label{subsubsec:RO}

Riemannian optimization is the problem of finding
\[
    x^* = \underset{x\in\mathcal M}{\operatorname{argmin}} \ f(x)
\]
where \((\mathcal M, g)\) is a Riemannian manifold and \(f:\mathcal M\to\mathbb R\) is a smooth function.
As in the Euclidean case, several optimization algorithms have
Riemannian analogues. For instance, Riemannian gradient descent
replaces the Euclidean update \(x \leftarrow x - \eta \nabla f(x)\) with
\[
    x \leftarrow \exp_x\bigl(-\eta\, \operatorname{grad} f(x)\bigr),
\]
where \(\operatorname{grad} f(x)\in T_x\mathcal M\) is the Riemannian
gradient of \(f\) at \(x\) and \(\exp_x: T_x\mathcal M \to \mathcal M\) is
the exponential map. In practice, \(\exp_x\) is often replaced by a
retraction \(R_x: T_x\mathcal M \to \mathcal M\), a computationally
cheaper first-order approximation to the exponential map satisfying
\(R_x(0)=x\) and \(\mathrm dR_x(0)=\operatorname{id}\). Stochastic
gradient descent and Adaptive Moment Estimation (Adam) admit similar
Riemannian analogues, and are implemented in \texttt{geoopt}
\cite{KKK:2020}, a PyTorch-based library for Riemannian
optimization. In addition to a collection of standard manifolds,
\texttt{geoopt} allows one to implement custom manifolds by
specifying the tangent space projection, the Riemannian metric, and a
retraction.

In our setting, we define the constraint manifold
\[
    \mathcal M = \left\{x=(x_0,\dots,x_d)\in\mathbb R^{d+1} \;\middle|\; x^{\mathsf T} L_d x = x_0^2+\cdots+x_{d-1}^2-x_d^2 = 1\right\}
\]
with the metric induced from \(\mathbb R^{d+1}\).
Each normal vector \(\nu_i\) is constrained to lie on \(\mathcal M\).
Hence the full optimization variable
\[
    \nu=(\nu_1,\dots,\nu_N)
\]
belongs to the product manifold \(\mathcal M^N\).
Since the diagonal conditions \(\nu_i^{\mathsf T} L_d \nu_i=1\) are enforced by the manifold constraint \(\nu_i\in\mathcal M\), it remains to impose the pairwise conditions
\[
    \begin{cases}
        \nu_i^{\mathsf T} L_d \nu_j=0 & \text{if } \{i,j\}\in E(\mathcal G),\\
        \nu_i^{\mathsf T} L_d \nu_j<-1 & \text{if } i \neq j \text{ and } \{i,j\}\notin E(\mathcal G).
    \end{cases}
\]
In the implementation, \(\mathcal M\) is represented as a custom manifold in \texttt{geoopt}, and the optimization is performed over \(N\) points of \(\mathcal M\).

To encourage these conditions, we minimize the objective function
\[
    f(\nu) = k_1 \cdot \frac{\sum_{\{i, j\} \in E(\mathcal G)} \sqrt{\bigl(\nu_i^{\mathsf T} L_d \nu_j\bigr)^2 + \epsilon}}{\max\{|E(\mathcal G)|, 1\}}  + k_2 \cdot \frac{\sum_{\{i, j\} \notin E(\mathcal G)} \operatorname{softplus}\bigl(k_3\bigl(\nu_i^{\mathsf T} L_d \nu_j + 1\bigr)\bigr)}{\max\{N(N-1)/2 - |E(\mathcal G)|, 1\}}
\]
where \(\operatorname{softplus}(x) = \log(1 + e^x)\) and \(k_1, k_2, k_3, \epsilon\) are positive real numbers.
The first term penalizes violations of the edge-orthogonality
conditions and attains its minimum when the corresponding inner
products vanish.  The positive constant \(\epsilon\) merely smooths the
absolute-value penalty and adds a constant offset at that minimum.
For a non-edge \(\{i, j\} \notin E(\mathcal G)\), the second term penalizes positive values of \(\nu_i^{\mathsf T} L_d \nu_j + 1\), and hence encourages the inequality \(\nu_i^{\mathsf T} L_d \nu_j < -1\).
Thus minimizing \(f\) drives the edge inner products toward zero and the
non-edge inner products below \(-1\).

Moreover, both the value and derivative of \(\operatorname{softplus}(x)\) approach zero as \(x \to -\infty\).
Thus, once \(\nu_i^{\mathsf T} L_d \nu_j\) is sufficiently smaller than \(-1\), the penalty exerts only a small additional force toward decreasing it further.

In this paper, we use \(k_1 = 70\), \(k_2 = 20\), \(k_3 = 100\), and \(\epsilon = 10^{-6}\), which were selected empirically from several parameter choices.

We minimize \(f\) using Riemannian Adam.
Since the optimization problem is nonconvex, the result is sensitive to the initial point and the algorithm may converge to a poor local minimum.
We therefore perform multiple runs from different initial points until we obtain a candidate whose constraint residuals are sufficiently small.

\subsubsection{ADMM}\label{subsubsec:ADMM}

The alternating direction method of multipliers (ADMM) is an optimization
method for problems with separable structure and coupling constraints.

A basic form of ADMM applies to convex problems of the form
\[
    \underset{x\in\mathbb R^n,\, z\in\mathbb R^m}{\operatorname{argmin}} \ f(x)+g(z)
    \qquad
    \text{subject to}
    \qquad
    Ax+Bz=c,
\]
where \(f:\mathbb R^n\to\mathbb R \cup \{+\infty\}\) and \(g:\mathbb R^m\to\mathbb R \cup \{+\infty\}\)
are closed, proper, convex, and \(A\in\mathbb R^{\tau\times n}\), \(B\in\mathbb
R^{\tau\times m}\), \(c\in\mathbb R^\tau\) are fixed.
The algorithm consists of the iterations
\begin{align*}
    x^{k+1} &:= \underset{x}{\operatorname{argmin}}\ \biggl(f(x) + (\rho/2)\|Ax+Bz^k-c+u^k\|_2^2\biggr) \\
    z^{k+1} &:= \underset{z}{\operatorname{argmin}}\ \biggl(g(z) + (\rho/2)\|Ax^{k+1}+Bz-c+u^k\|_2^2\biggr) \\
    u^{k+1} &:= u^k + Ax^{k+1}+Bz^{k+1}-c
\end{align*}
for some positive real number \(\rho\).

For nonconvex problems, ADMM need not converge; in this setting it
is best regarded as a heuristic local method rather than a solver with
guaranteed convergence; see \cite{Boyd:2011} for a detailed account.

In this stage we optimize the Gram matrix
\(Q=\nu^{\mathsf T}L_d\nu\) directly rather than the pole matrix
\(\nu\).  The constraints become
\[
    \begin{cases}
        Q_{ij} = 1 & \text{if } i = j,\\
        Q_{ij} = 0 & \text{if } \{i,j\} \in E(\mathcal G),\\
        Q_{ij} <-1 & \text{if } i \neq j \text{ and } \{i,j\} \notin E(\mathcal G),
    \end{cases}
\]
and
\begin{align*}
    &Q^{\mathsf T} = Q, \\
    &\operatorname{sig}(Q) = (s,1,N-s-1),
\end{align*}
for some \(s \leq d\) where the entries of \(\operatorname{sig}(Q)\) denote the numbers of positive, negative, and zero eigenvalues, respectively.

Define
\begin{align*}
    \mathcal C_{elt} &= \left\{Q \in \operatorname{Mat}_{N \times N}(\mathbb R)\ \middle| \ \begin{aligned}
        Q_{ij} &= 1 && \text{ if } i = j, \\
        Q_{ij} &= 0 && \text{ if } \{i,j\} \in E(\mathcal G), \\
        Q_{ij} &\leq -1 && \text{ if } i \neq j \text{ and } \{i,j\} \notin E(\mathcal G)
    \end{aligned} \right\}, \\
    \mathcal C_{sig} &= \left\{Q \in \operatorname{Mat}_{N \times N}(\mathbb R)\ \middle| \ \begin{aligned}
        &Q^{\mathsf T} = Q, \\
        &\exists\ s \in \{0, \dots, d\}, t \in \{0, 1\} \\
        &\operatorname{sig}(Q) = (s, t, N - s - t)
    \end{aligned}
    \right\},
\end{align*}
and 
\[
    f_{elt}(Q) := \begin{cases}
        0 & \text{if } Q \in \mathcal C_{elt} \\
        \infty & \text{if } Q \notin \mathcal C_{elt}
    \end{cases}, \qquad
    g_{sig}(Q) := \begin{cases}
        0 & \text{if } Q \in \mathcal C_{sig} \\
        \infty & \text{if } Q \notin \mathcal C_{sig}
    \end{cases}.
\]
The problem may therefore be written as
\[
    \underset{X,Z \in \operatorname{Mat}_{N \times N}(\mathbb{R})}{\operatorname{argmin}} \ f_{elt}(X)+g_{sig}(Z)
    \qquad
    \text{subject to}
    \qquad
    X-Z=0
\]
and the iteration step is
\begin{align*}
    X^{k+1} &:= \underset{X \in \operatorname{Mat}_{N \times N}(\mathbb{R})}{\operatorname{argmin}}\ \biggl(f_{elt}(X) + (\rho/2)\|X-Z^k+U^k\|_F^2\biggr) \\
    Z^{k+1} &:= \underset{Z \in \operatorname{Mat}_{N \times N}(\mathbb{R})}{\operatorname{argmin}}\ \biggl(g_{sig}(Z) + (\rho/2)\|X^{k+1}-Z+U^k\|_F^2\biggr) \\
    U^{k+1} &:= U^k + X^{k+1}-Z^{k+1}
\end{align*}
where \(\|Q\|_F = \sqrt{\operatorname{tr}(Q^{\mathsf T} Q)}\) denotes the Frobenius norm, i.e., the Euclidean norm on \(\operatorname{Mat}_{N \times N}(\mathbb R)\cong\mathbb R^{N^2}\).

\begin{fact}\label{indicator}
    Let \(\mathcal C \subseteq \mathbb R^n\) be a nonempty closed set and
    \[
        i_{\mathcal C} (x) = \begin{cases}
            0 & \text{if } x \in \mathcal C \\
            \infty & \text{if } x \notin \mathcal C
        \end{cases}
    \]
    be the indicator function.  Then
    \[
        \underset{x}{\operatorname{argmin}}\ \bigl(i_{\mathcal{C}}(x) + (\rho/2) h(x)\bigr) = \underset{x \in \mathcal C}{\operatorname{argmin}}\ h(x)
    \]
    for any \(h: \mathbb R^n \rightarrow \mathbb R\).
    Hence the minimizer is independent of the value of \(\rho>0\).
\end{fact}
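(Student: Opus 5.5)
The statement to prove is Fact~\ref{indicator}. For \(\rho>0\), the indicator \(i_{\mathcal C}\) is \(+\infty\) off \(\mathcal C\) and \(0\) on \(\mathcal C\). Since \(h\) takes only real values, the function \(F(x)=i_{\mathcal C}(x)+(\rho/2)h(x)\) has the following form:
\[
F(x)=\begin{cases}
(\rho/2)h(x), & x\in\mathcal C,\\
+\infty, & x\notin\mathcal C.
\end{cases}
\]
The plan is to compare the two argmin sets directly.

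First, because \(\mathcal C\) is nonempty, \(F\) takes at least one finite value. Hence \(\inf F<\infty\), and no point outside \(\mathcal C\) can be a minimizer of \(F\). So every minimizer of \(F\) lies in \(\mathcal C\).

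Second, on \(\mathcal C\) we have \(F=(\rho/2)h\). Multiplying by the positive constant \(\rho/2\) is strictly increasing, so it preserves the order of values. Therefore \(x_0\in\mathcal C\) minimizes \(F\) over \(\mathbb R^n\) if and only if \(x_0\) minimizes \(h\) over \(\mathcal C\). This gives equality of the two argmin sets, interpreted as sets; either side may be empty, since closedness of \(\mathcal C\) alone does not guarantee attainment.

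The right-hand side, \(\operatorname{argmin}_{x\in\mathcal C}h(x)\), does not involve \(\rho\). The independence from \(\rho\) then follows at once.

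There is no real obstacle here. The only points needing care are the convention \(\infty+\text{finite}=\infty\) and the use of nonemptiness to exclude points outside \(\mathcal C\). In the ADMM steps one takes \(h(X)=\|X-Z^k+U^k\|_F^2\), so each step is a Frobenius-nearest-point projection onto \(\mathcal C_{elt}\), respectively onto \(\mathcal C_{sig}\).
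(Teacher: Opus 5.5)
Your proof is correct, and it is exactly the routine verification the paper leaves implicit: the paper states this as a Fact with no proof. Your two observations suffice. Nonemptiness of \(\mathcal C\) excludes minimizers outside \(\mathcal C\), and scaling by \(\rho/2>0\) preserves minimizers on \(\mathcal C\); closedness plays no role in the set equality.

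One small correction to your closing remark: for the \(Z\)-update the relevant function is \(h(Z)=\|X^{k+1}-Z+U^k\|_F^2\), so that step projects \(X^{k+1}+U^k\) onto \(\mathcal C_{sig}\).
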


The following lemma gives the explicit calculation.
\begin{lemma} \label{lem:admm-updates}
    Let \(X^k, Z^k, U^k\) be symmetric \(N \times N\) matrices.  Then
    \begin{enumerate}
        \item \(X^{k+1}\) is a symmetric matrix such that
        \[
            X^{k+1}_{ij} = \begin{cases}
                1 & \text{if } i = j \\
                0 & \text{if } \{i, j\} \in E(\mathcal G) \\
                \min \bigl\{(Z^k-U^k)_{ij}, -1\bigr\} & \text{if } i \neq j \text{ and } \{i, j\} \notin E(\mathcal G)
            \end{cases}.
        \]
        
        \item \(Z^{k+1}\) is a symmetric matrix such that
        \[
            Z^{k+1} = \min\{\lambda_1,0\}\, v_1 v_1^{\mathsf T} + \sum_{i=N-d+1}^{N} \max\{\lambda_i,0\}\, v_i v_i^{\mathsf T},
        \]
        where \(v_1,\dots,v_N\) are orthonormal eigenvectors of \(X^{k+1}+U^k\) corresponding to eigenvalues \(\lambda_1\le\cdots\le\lambda_N\).

        \item \(U^{k+1}\) is symmetric.
    \end{enumerate}
\end{lemma}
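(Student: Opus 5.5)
Each part reduces to the fact stated just above: an update with an indicator function is the Euclidean (Frobenius) projection onto the corresponding constraint set.

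\emph{Part (1).} By the fact, $X^{k+1}$ is the Frobenius-nearest point of $\mathcal C_{elt}$ to $Z^k-U^k$. The set $\mathcal C_{elt}$ is a product of independent coordinate constraints on the entries: equalities on the diagonal and on edge positions, half-lines $(-\infty,-1]$ on non-edge positions, and no constraint linking an $(i,j)$ entry to its $(j,i)$ entry beyond the fact that both are constrained identically. The squared Frobenius distance is a sum over entries, so the minimization splits entrywise. An entry with a prescribed value equals that value ($1$ or $0$). On a non-edge position the entry is the projection of $(Z^k-U^k)_{ij}$ onto $(-\infty,-1]$, namely $\min\{(Z^k-U^k)_{ij},-1\}$. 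Since $Z^k-U^k$ is symmetric, the resulting matrix is symmetric. The minimizer is unique, because the set is closed and convex.

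\emph{Part (2).} Here $Z^{k+1}$ is a nearest point of $\mathcal C_{sig}$ to the symmetric matrix $M=X^{k+1}+U^k$, which is symmetric by part (1) and the hypothesis on $U^k$. Nearest-point problems in $\mathcal C_{sig}$ are governed by a spectral argument, which I would make in the following steps.

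First, for any symmetric $Z$, the Hoffman--Wielandt (or von Neumann trace) inequality gives
\[
\|M-Z\|_F^2\ \ge\ \sum_i(\lambda_i-\mu_i)^2,
\]
with both spectra sorted increasingly. Equality holds when $Z$ is simultaneously diagonalized by the eigenvectors $v_i$ of $M$ with matching order. The minimization therefore reduces to choosing a sorted vector $\mu$ with at most $d$ positive entries and at most one negative entry, so as to minimize $\sum_i(\lambda_i-\mu_i)^2$.

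Second, solve this vector problem. Any negative $\mu$-entry is the smallest one, $\mu_1$. Any positive entries occupy the top $d$ slots. All other entries are $0$. For each coordinate, the best choice is to clip, independently over the slots. In slot $1$ the entry is allowed to be $\le 0$, so the optimal value is $\min\{\lambda_1,0\}$. In the top $d$ slots the entries are allowed to be $\ge 0$, so the optimal value is $\max\{\lambda_i,0\}$. Everywhere else the entry is $0$.

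Third, check optimality. I would argue that assigning the negative slot to $\lambda_1$ and the positive slots to the $d$ largest eigenvalues is optimal. The exchange argument is that the cost saved in a slot, $\lambda^2$ when $\lambda<0$ in the negative slot, is largest for the most extreme eigenvalue.

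Fourth, the resulting $Z^{k+1}=\min\{\lambda_1,0\}v_1v_1^{\mathsf T}+\sum_{i>N-d}\max\{\lambda_i,0\}v_iv_i^{\mathsf T}$ has signature $(s,t,N-s-t)$ with $s\le d$ and $t\le1$. It is therefore in $\mathcal C_{sig}$.

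The main obstacle is the degenerate case $N\le d$, where slot $1$ also lies among the top $d$ slots. There $\lambda_1$ can be kept exactly, and the displayed formula should be read with the slots taken disjoint. For $N>d+1$ this issue does not arise. Nonuniqueness of the projection, which occurs under eigenvalue ties, only affects which minimizer is chosen, and the formula gives one of them.

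\emph{Part (3).} $U^{k+1}=U^k+X^{k+1}-Z^{k+1}$ is a sum of symmetric matrices, hence symmetric. By induction, all iterates remain symmetric, which justifies the hypotheses at every step.
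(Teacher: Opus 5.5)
Your argument is correct. Parts (1) and (3) coincide with the paper's: projection onto the convex set $\mathcal C_{elt}$, which you usefully spell out as entrywise clipping, and closure of symmetric matrices under sums. For part (2), however, you take a genuinely different route.

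The paper follows Eckart--Young. It writes $Z=PDP^{\mathsf T}$, fixes $D$, and uses the first-order condition for $P\mapsto\|Y-PDP^{\mathsf T}\|_F^2$ on the compact group $O(N)$ to obtain $[P_0^{\mathsf T}YP_0,D]=0$. A rotation inside the eigenspaces of $D$ and a permutation then reduce everything to minimizing $\sum_i(\lambda_i-\mu_i)^2$ over admissible diagonal $D$.

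You replace this variational computation by the Hoffman--Wielandt (equivalently, von Neumann trace) inequality, a global lower bound attained by matrices co-diagonal with $M$. This is shorter, and it also pins the pairing of the $\mu_i$ with the $\lambda_i$ (sorted with sorted). The signature constraint then immediately forces, for $N>d$, $\mu_1\le0$, $\mu_2=\cdots=\mu_{N-d}=0$ and $\mu_{N-d+1},\dots,\mu_N\ge0$. Coordinatewise clipping gives a sorted vector, which is therefore optimal.

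In the paper, after the permutation step the entries of $D$ may sit in any order relative to $\Lambda$. The choice of $\lambda_1$ for the negative slot and of the $d$ largest eigenvalues for the positive slots is stated without comment. Your exchange remark is exactly the short argument behind that statement. The price of your route is reliance on an external inequality, whereas the paper's argument is self-contained calculus.

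Two small remarks. First, within your own framework the third step is redundant, since Hoffman--Wielandt already fixes the sorted pairing. Second, for $N=d$ the displayed formula needs no reinterpretation. Read literally, it puts coefficient $\min\{\lambda_1,0\}+\max\{\lambda_1,0\}=\lambda_1$ on $v_1v_1^{\mathsf T}$, which is the correct projection. In any case $N\ge51>d=5$ in the applications.
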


\begin{proof}
    Since \(\mathcal{C}_{elt}\) is convex, \(X^{k+1}\) is the projection of \(Z^k-U^k\) onto \(\mathcal C_{elt}\) as noted in \cite[\S 4.1]{Boyd:2011}, which directly proves (1).
    
    For (2), we follow the idea of \cite{EY:1936}.
    By Fact~\ref{indicator}, the minimizer \(Z\) lies in \(\mathcal C_{sig}\);
    in particular \(Z\) is symmetric, so we may write
    \(Z = PDP^{\mathsf T}\) for an orthogonal matrix \(P\) and a diagonal matrix
    \(D = \operatorname{diag}(\mu_1,\dots,\mu_N)\) whose entries are the
    eigenvalues of \(Z\), subject only to the signature condition
    \(D \in \mathcal C_{sig}\).
    We therefore minimize over the pair \((P, D)\) in two stages: we first fix
    \(D\) and determine the optimal \(P\), and then minimize over \(D\).

    Fix \(D\), so that \(Z\) is determined by \(P\) alone. Let
    \(Y = X^{k+1} + U^k\),
    \[
        l(P) = \|Y-PDP^{\mathsf T}\|_F^2 = \operatorname{tr}\bigl((Y-PDP^{\mathsf T})^2\bigr),
    \]
    and
    \[
        P_0 = \underset{P \in O(N)}{\operatorname{argmin}}\ l(P).
    \]
    Since \(l\) has minimum at \(P_0\),
    \[
        dl_{P_0}(S) = 0
    \]
    for all
    \[
        S \in T_{P_0}O(N) = \left\{S \in \operatorname{Mat}_{N \times N}(\mathbb{R}) \ \middle| \ S^{\mathsf T} P_0 + P_0^{\mathsf T} S = 0\right\}.
    \]
    Let \(S_0 = P_0^{\mathsf T}S\).  Then \(S_0\) is skew-symmetric.
    Since \(P_0^{\mathsf T}\) is invertible, \(S_0\) can be an arbitrary skew-symmetric matrix.
    Thus
    \begin{align*}
        dl_{P_0}(S) &= 2 \operatorname{tr}\bigl((Y-P_0 D P_0^{\mathsf T})(-SDP_0^{\mathsf T}-P_0DS^{\mathsf T})\bigr) \\
        &= 2 \operatorname{tr}\bigl((D - P_0^{\mathsf T} Y P_0)(P_0^{\mathsf T} SD + DS^{\mathsf T} P_0)\bigr) \\
        &= 2 \operatorname{tr}\bigl((D - P_0^{\mathsf T} Y P_0)(S_0 D - DS_0)\bigr) \\
        &= 2 \operatorname{tr}(DS_0D-D^2S_0-P_0^{\mathsf T} Y P_0 S_0 D+P_0^{\mathsf T} Y P_0DS_0) \\
        &= 2 \operatorname{tr}(P_0^{\mathsf T} Y P_0DS_0 - P_0^{\mathsf T} Y P_0 S_0 D) \\
        &= 2 \operatorname{tr}(P_0^{\mathsf T} Y P_0DS_0 - DP_0^{\mathsf T} Y P_0 S_0) \\
        &= 2 \operatorname{tr}([P_0^{\mathsf T} Y P_0, D]\;S_0) = 0.
    \end{align*}
    Hence \([P_0^{\mathsf T}YP_0,D]\) must be symmetric.
    However, since \(P_0^{\mathsf T} Y P_0\) and \(D\) are both symmetric, \([P_0^{\mathsf T} Y P_0, D]\) is skew-symmetric.
    Therefore, \([P_0^{\mathsf T} Y P_0, D]\) is zero and \(P_0^{\mathsf T} Y P_0\), \(D\) commute.
    Since both are symmetric, there is an orthogonal matrix \(R\) preserving
    each eigenspace of \(D\) such that \(R^{\mathsf T} (P_0^{\mathsf T} Y P_0) R\) is
    diagonal. Such an \(R\) satisfies \(RDR^{\mathsf T} = D\), so \(l(P_0R) = l(P_0)\)
    and \(P_0R\) is again a minimizer of \(l\). Replacing \(P_0\) by \(P_0R\),
    we may assume that \(P_0^{\mathsf T} Y P_0\) is diagonal.

    Furthermore, permuting the columns of \(P_0\) and the diagonal entries of
    \(D\) accordingly changes neither the value of \(l\) nor the membership
    \(D \in \mathcal C_{sig}\), since \(\mathcal C_{sig}\) constrains only the
    numbers of positive, negative, and zero eigenvalues. Hence we may further
    assume that the columns of
    \[
        P_0 = \begin{bmatrix}
            | & & | \\
            v_1 & \cdots & v_N\\
            | & & | \\
        \end{bmatrix}
    \]
    are orthonormal eigenvectors of \(Y\) with \(Y = P_0 \Lambda P_0^{\mathsf T}\),
    where \(\Lambda = \operatorname{diag}(\lambda_1,\dots,\lambda_N)\) and
    \(\lambda_1 \le \cdots \le \lambda_N\).
    
    Now, the Frobenius norm becomes
    \[
        \|Y-Z\|_F^2 = \|\Lambda-D\|_F^2 = \sum_{i = 1}^{N} (\lambda_i - \mu_i)^2,
    \]
    so the minimum occurs at
    \begin{align*}
        \mu_1 &= \min\{\lambda_1, 0\}, \\
        \mu_2 &= 0, \\
        & \ \; \vdots \\
        \mu_{N - d} &= 0, \\
        \mu_{N-d+1} &= \max\{\lambda_{N-d+1}, 0\}, \\
        & \ \; \vdots \\
        \mu_{N} &= \max\{\lambda_{N}, 0\}. \\
    \end{align*}
    
    Assertion~(3) follows immediately because
    \(U^k\), \(X^{k+1}\), and \(Z^{k+1}\) are symmetric.
\end{proof}

\begin{remark}
    The Gram matrix of a \(d\)-dimensional normal matrix satisfies the strict
    inequality \(Q_{ij} < -1\) for \(\{i,j\}\notin E(\mathcal G)\), and has
    exactly one negative eigenvalue. Neither of these conditions defines a
    closed set, so the corresponding Euclidean projections need not exist.
    We therefore relax both: \(\mathcal C_{elt}\) allows \(Q_{ij} \le -1\),
    and \(\mathcal C_{sig}\) allows \(t = 0\) as well as \(t = 1\). Both
    relaxations replace the original condition by its closure, so a matrix
    satisfying the original conditions still lies in \(\mathcal C_{elt} \cap
    \mathcal C_{sig}\).
\end{remark}

\subsubsection{Combining the two stages} \label{subsubsec:twostep}
Each of the two methods above has a complementary weakness, which
motivates using them together rather than separately.

In our computations, Riemannian optimization makes rapid initial
progress toward feasibility, but often stalls before reaching the
precision needed for certification.  ADMM can then improve the residuals
because its steps impose the constraints by projection rather than by a
gradient update.  Its weakness lies elsewhere: the signature projection
is nonconvex, so no general convergence theorem applies, and ADMM is
unreliable from an arbitrary starting point.

Because these two failure modes are complementary, we combine the
methods in sequence: Riemannian optimization is run first, to quickly
reach a neighborhood of feasibility, and its output is then refined
with ADMM to reach the precision required for certification.

Recall that \(\mathcal C_{elt}\) and \(\mathcal C_{sig}\) relax the
strict conditions \(Q_{ij}<-1\) and \(\operatorname{sig}(Q)=(s,1,N-s-1)\)
to their closures, so as to make the projections in
Lemma~\ref{lem:admm-updates} well-defined. Since Riemannian optimization
already drives the approximate solution close to satisfying the strict
conditions before ADMM is applied, we expect the non-edge entries to
remain strictly below \(-1\) and the smallest eigenvalue to remain
negative throughout the ADMM iterations, so that the strict conditions
continue to hold in practice despite this relaxation.

Concretely, writing \(\hat Q = \hat\nu^{\mathsf T} L_d \hat\nu\) for the Gram
matrix of the approximate solution obtained in \S\ref{subsubsec:RO}, we
initialize ADMM at
\[
    X^0 = Z^0 = \hat Q, \qquad U^0 = 0,
\]
which are symmetric. By Lemma~\ref{lem:admm-updates}, symmetry is
preserved under each update, so the hypothesis of the lemma holds at
every iteration.

We emphasize that no convergence guarantee is claimed for either stage
in isolation, or for their combination; this is immaterial to the
soundness of our overall method, since the correctness of a verified
output is guaranteed independently by the certification procedure of
Section~\ref{subsec:verification}, regardless of how \(\hat\nu\) was
obtained. Failure of this pipeline to converge (or convergence to an
infeasible point) simply results in an inconclusive certification,
prompting a restart from a new random seed.

\subsection{Rigorous verification} \label{subsec:verification}

We use the existence criterion of Chen and Womersley~\cite{CW:2006} for
systems of underdetermined equations.

Let \(c:\mathbb R^n\to\mathbb R^m\) be a continuously differentiable
function with \(m<n\), and suppose that \(\hat x\in\mathbb R^n\) is an
approximate solution of \(c(x)=0\) (i.e., \(c(\hat x)\approx 0\)) at
which the Jacobian \(c'(\hat x)\) has full row rank. Then there exists
an index set \(\mathcal B\subseteq\{1,\dots,n\}\) with \(|\mathcal
B|=m\) such that \(c'_{\mathcal B}(\hat x)\), the matrix obtained by
selecting the columns indexed by \(\mathcal B\) from \(c'(\hat x)\), is
invertible. Let \(\mathcal N=\mathcal B^c\). For two nonnegative
numbers \(r_1\) and \(r_2\), define the convex set
\[
    X = \{x \in \mathbb R^n \mid \|x_{\mathcal B} - \hat x_{\mathcal B}\| \leq r_1, \|x_{\mathcal N} - \hat x_{\mathcal N}\| \leq r_2\}
\]
where \(x_I\) denotes the vector obtained by selecting the entries of \(x\) indexed by \(I\subseteq\{1,\dots,n\}\) and \(\|\cdot\|\) is a fixed vector space norm.

\begin{thm}[Chen--Womersley \cite{CW:2006}]\label{thm:CW}
    Suppose that there exists \(K > 0\) satisfying
    \[
        \|c'_{\mathcal B}(x) - c'_{\mathcal B}(\hat x)\| \leq K \|x - \hat x\|
    \]
    for all \(x \in X\).
    If
    \[
        \|c'_{\mathcal B}(\hat x)^{-1} c(\hat x)\| + \|c'_{\mathcal B}(\hat x)^{-1}\|\biggl(\frac{1}{2} K (r_1 + r_2) r_1 + \max_{x \in X} \|c'_{\mathcal N}(x)\| r_2\biggr) \leq r_1,
    \]
    there exists a solution of \(c(x) = 0\) in \(X\).
\end{thm}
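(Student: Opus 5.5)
The plan is to reduce the statement to a fixed-point problem in the \(m\) basic variables and apply Brouwer's theorem, in the spirit of the Kantorovich argument. Set \(A=c'_{\mathcal B}(\hat x)\), which is invertible by the choice of \(\mathcal B\). Fix a point \(x_{\mathcal N}\) with \(\|x_{\mathcal N}-\hat x_{\mathcal N}\|\le r_2\), and define
\[
 T(x_{\mathcal B})=x_{\mathcal B}-A^{-1}c(x_{\mathcal B},x_{\mathcal N})
\]
on the closed ball \(\bar B=\{x_{\mathcal B}:\|x_{\mathcal B}-\hat x_{\mathcal B}\|\le r_1\}\). A fixed point of \(T\) is exactly a zero of \(c\) lying in \(X\). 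Since \(T\) is continuous and \(\bar B\) is a compact convex set in \(\mathbb R^m\), it suffices to show \(T(\bar B)\subset\bar B\).

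To estimate \(T(x_{\mathcal B})-\hat x_{\mathcal B}\), write \(x=(x_{\mathcal B},x_{\mathcal N})\). Because \(X\) is convex, the segment from \(\hat x\) to \(x\) lies in \(X\), so the integral form of the mean value theorem gives
\[
 c(x)=c(\hat x)+\int_0^1\Bigl(c'_{\mathcal B}(\hat x+s(x-\hat x))(x_{\mathcal B}-\hat x_{\mathcal B})+c'_{\mathcal N}(\hat x+s(x-\hat x))(x_{\mathcal N}-\hat x_{\mathcal N})\Bigr)\,ds .
\]
Subtracting \(A(x_{\mathcal B}-\hat x_{\mathcal B})\) shows that
\[
 T(x_{\mathcal B})-\hat x_{\mathcal B}=-A^{-1}c(\hat x)-A^{-1}\!\int_0^1\bigl(c'_{\mathcal B}(\hat x+s(x-\hat x))-A\bigr)(x_{\mathcal B}-\hat x_{\mathcal B})\,ds-A^{-1}\!\int_0^1 c'_{\mathcal N}(\cdots)(x_{\mathcal N}-\hat x_{\mathcal N})\,ds .
\]

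We now bound the three terms. The first is \(\|A^{-1}c(\hat x)\|\). For the second, the Lipschitz hypothesis gives \(\|c'_{\mathcal B}(\hat x+s(x-\hat x))-A\|\le Ks\|x-\hat x\|\). Using \(\|x-\hat x\|\le r_1+r_2\) (with the norm on \(\mathbb R^n\) chosen compatibly with the block norms, e.g.\ via the triangle inequality over the splitting) and integrating \(s\) gives the bound \(\|A^{-1}\|\tfrac12K(r_1+r_2)r_1\). The third term is at most \(\|A^{-1}\|\max_X\|c'_{\mathcal N}\|\,r_2\). The hypothesis then says exactly that the sum is \(\le r_1\), so \(T(\bar B)\subset\bar B\), and Brouwer gives the zero.

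The main obstacle is bookkeeping rather than substance. One must ensure the norm conventions make \(\|x-\hat x\|\le r_1+r_2\) valid and use operator norms consistently. One must also note that \(\max_X\|c'_{\mathcal N}\|\) exists, which holds because \(X\) is compact and \(c\) is \(C^1\).
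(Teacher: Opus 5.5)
Your argument is correct. The paper gives no proof of this statement; it imports it from Chen--Womersley. What you wrote is the standard self-contained proof of this Kantorovich-type criterion:
\begin{enumerate}
\item freeze an admissible \(x_{\mathcal N}\);
\item take the simplified Newton map \(x_{\mathcal B}\mapsto x_{\mathcal B}-c'_{\mathcal B}(\hat x)^{-1}c(x_{\mathcal B},x_{\mathcal N})\);
\item show, using the integral mean value formula along segments in the convex set \(X\), that this map sends the \(r_1\)-ball into itself;
\item apply Brouwer.
\end{enumerate}

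Your argument in fact gives slightly more than stated: a zero exists for every \(x_{\mathcal N}\) with \(\|x_{\mathcal N}-\hat x_{\mathcal N}\|\le r_2\).

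Brouwer, rather than the contraction principle, is the right tool here. The natural Lipschitz constant of your map is \(\|c'_{\mathcal B}(\hat x)^{-1}\|K(r_1+r_2)\), and the hypothesis only guarantees that this is at most \(2\). Your route also covers the boundary case \(D=0\) allowed in Theorem~\ref{thm:criterion}, where a strict contraction is not available.

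Your norm-compatibility caveat causes no trouble in the paper's application. There the \(\infty\)-norm gives \(\|x-\hat x\|_\infty\le\max\{r_1,r_2\}\le r_1+r_2\), and in any case \(r_2=0\).
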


\begin{remark}
    This also holds for \(m=n\), with \(\mathcal N=\emptyset\), in which case
    the theorem coincides with the classical Newton--Kantorovich theorem
    \cite{OR:1970} (cf. \cite[Remark~2.1]{CW:2006}).
\end{remark}

Let \(\hat \nu \in \operatorname{Mat}_{(d+1) \times N}(\mathbb R)\) be an approximate \(d\)-dimensional normal matrix, i.e.,
\[
    \begin{cases}
        \hat \nu_i^{\mathsf T} L_d \hat \nu_j \approx 1 & \text{if } i = j\\
        \hat \nu_i^{\mathsf T} L_d \hat \nu_j \approx 0 & \text{if } \{i,j\} \in E(\mathcal G) \\
        \hat \nu_i^{\mathsf T} L_d \hat \nu_j < -1 & \text{if } \{i,j\} \notin E(\mathcal G)
    \end{cases}.        
\]
Define \(F: \operatorname{Mat}_{(d+1)\times N}(\mathbb R) \rightarrow \mathbb R^{N + |E(\mathcal G)|}\) as
\[
    F(v) = \bigl(v_1^{\mathsf T} L_d v_1 - 1, \dots, v_N^{\mathsf T} L_d v_N - 1,\ v_{i_1}^{\mathsf T} L_d v_{j_1}, \dots, v_{i_{|E(\mathcal G)|}}^{\mathsf T} L_d v_{j_{|E(\mathcal G)|}}\bigr)
\]
where \(\{i_l, j_l\} \in E(\mathcal G)\), \(i_l < j_l\).
Then \(F\) is continuously differentiable, and a matrix
\(\nu\in\operatorname{Mat}_{(d+1)\times N}(\mathbb R)\) is an exact solution
of \(F(\nu)=0\) if and only if
\[
\begin{cases}
    \nu_i^{\mathsf T} L_d \nu_j = 1 & \text{if } i = j\\
    \nu_i^{\mathsf T} L_d \nu_j = 0 & \text{if } \{i,j\} \in E(\mathcal G)
\end{cases}.
\]
We want to check the existence of a solution \(\nu\) of \(F\) near an approximate solution \(\hat \nu\) of \(F\).

From now on, we identify \(\operatorname{Mat}_{(d+1)\times N}(\mathbb R)\) with \(\mathbb R^{N(d+1)}\) as \(v = (v_1^{\mathsf T}, \dots, v_N^{\mathsf T})\).

We first record a global Lipschitz bound for the Jacobian.  The simple
constant below is sufficient because \(F\) is quadratic.

\begin{lemma} \label{lipschitzbound}
    For all \(u, v \in \mathbb R^{N(d+1)}\),
    \[
        \|F'(u) - F'(v)\|_\infty \leq 2(d+1) \|u - v\|_\infty
    \]
    where \(\| \cdot \|_\infty\) denotes the \(\infty\)-norm and its induced matrix norm.
\end{lemma}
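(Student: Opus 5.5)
The plan is to compute the Jacobian \(F'(v)\) explicitly and use the fact that it is linear in \(v\). Each component of \(F\) is a bilinear expression \(v_i^{\mathsf T}L_dv_j\), with \(i=j\) for the diagonal components. Its gradient with respect to the block \(v_k\) is therefore \(L_dv_j\) if \(k=i\) and \(L_dv_i\) if \(k=j\), which gives \(2L_dv_i\) when \(i=j\). All other blocks are zero. Hence \(F'\) is a linear map of \(v\), so \(F'(u)-F'(v)=F'(u-v)\). It therefore suffices to show
\[
\|F'(w)\|_\infty\le 2(d+1)\|w\|_\infty
\]
for every \(w\in\mathbb R^{N(d+1)}\).

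The induced \(\infty\)-norm of a matrix is its maximal absolute row sum, so I will bound each row of \(F'(w)\) separately.

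\emph{Diagonal component} \(w_i^{\mathsf T}L_dw_i-1\). Its row has nonzero entries only in block \(i\), and these entries are \(2(L_dw_i)_s\) for \(s=1,\dots,d+1\). Since \(L_d\) is diagonal with entries \(\pm1\), each entry has absolute value at most \(2\|w\|_\infty\). Summing over the \(d+1\) coordinates gives a row sum of at most \(2(d+1)\|w\|_\infty\).

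\emph{Edge component} \(w_i^{\mathsf T}L_dw_j\) with \(i\ne j\). Its row has block \(i\) equal to \(L_dw_j\) and block \(j\) equal to \(L_dw_i\). This gives \(2(d+1)\) entries, each of absolute value at most \(\|w\|_\infty\). The row sum is again at most \(2(d+1)\|w\|_\infty\).

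Taking the maximum over all rows proves the lemma. There is no real obstacle here; the only point needing care is getting the factor \(2\) right in the diagonal rows, where both blocks coincide. In both cases the bound \(2(d+1)\) comes out the same.
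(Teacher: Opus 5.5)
Your proposal is correct and follows essentially the same route as the paper: compute the Jacobian rows explicitly (with $2L_dv_i$ for diagonal components and the two blocks $L_dv_j$, $L_dv_i$ for edge components), then bound the maximal absolute row sum by $2(d+1)\|u-v\|_\infty$. Your observation that $F'$ is linear, so $F'(u)-F'(v)=F'(u-v)$, is only a cosmetic repackaging of the paper's direct row-by-row bound on the difference.
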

\begin{proof}
    Recall that for a matrix \(A\), 
    \[
    \|A\|_{\infty} = \max_{i} \sum_{j} |A_{ij}|.
    \]
    Let
    \[
        f_i(v) = v_i^{\mathsf T} L_d v_i, \quad g_l(v) = v_{i_l}^{\mathsf T} L_d v_{j_l}
    \]
    for \(1 \leq i \leq N\), \(1 \leq l \leq |E(\mathcal G)|\).
    Thus
    \begin{align*}
        f_i'(v) &= (0, \dots, 0, 2v_i^{\mathsf T} L_d, 0, \dots, 0), \\
        g_l'(v) &= (0, \dots, 0, v_{j_l}^{\mathsf T} L_d, 0, \dots, 0, v_{i_l}^{\mathsf T} L_d, 0, \dots, 0).
    \end{align*}
    Write \(u_i=(u_{i,1},\dots,u_{i,d+1})^{\mathsf T}\) and
    \(v_i=(v_{i,1},\dots,v_{i,d+1})^{\mathsf T}\).  Then
    \begin{align*}
        &\|F'(u) - F'(v)\|_\infty \\
        =& \max_{\substack{1 \leq i \leq N \\ 1 \leq l \leq |E(\mathcal G)|}} \Bigg\{\sum_{q = 1}^{d + 1} 2 |u_{i, q} - v_{i, q}|,\ \sum_{q = 1}^{d + 1}\bigl(|u_{j_l, q} - v_{j_l, q}| + |u_{i_l, q} - v_{i_l, q}|\bigr) \Bigg\} \\
        \leq& \max\bigl\{2(d+1)\|u-v\|_\infty,\ 2(d+1)\|u-v\|_\infty\bigr\} \\
        =& 2(d+1)\|u - v\|_{\infty}.
    \end{align*}
\end{proof}

We now specialize Theorem~\ref{thm:CW} to the constraint map \(F\).

\begin{thm} \label{thm:criterion}
    Suppose \(N + |E(\mathcal G)| \leq N (d + 1)\) and \(F'(\hat \nu)\) has full row rank.
    Pick \(\mathcal{B} \subseteq \{1, \dots, N(d+1)\}\) with \(|\mathcal B| = N + |E(\mathcal G)|\) such that \(F'_\mathcal{B}(\hat \nu)\) is invertible, and let \(\mathcal{N} = \mathcal{B}^c\).
    For
    \[
        \eta = \|F'_\mathcal{B}(\hat \nu)^{-1} F(\hat \nu)\|_\infty, \quad \beta = \|F'_\mathcal{B}(\hat \nu)^{-1}\|_\infty,
    \]
    if
    \[
        D = 1 - 4 \eta \beta (d + 1) \geq 0,
    \]
    then there is an exact solution \(\nu\) of \(F\) in
    \[
        X =
        \bigl\{
        v \in \mathbb{R}^{N(d+1)}
        \mid
        \|v_{\mathcal B}-\hat \nu_{\mathcal B}\|_\infty \le r_1,\;
        v_{\mathcal N}=\hat \nu_{\mathcal N}
        \bigr\}
    \]
    where
    \[
        r_1 = \frac{1 - \sqrt{D}}{2 \beta (d + 1)}.
    \]
\end{thm}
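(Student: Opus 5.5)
We derive Theorem~\ref{thm:criterion} as a direct specialization of Theorem~\ref{thm:CW} with \(c=F\), \(n=N(d+1)\), \(m=N+|E(\mathcal G)|\), the \(\infty\)-norm throughout, and the choice \(r_2=0\). With \(r_2=0\) the box \(X\) of Theorem~\ref{thm:CW} is exactly the set in the statement: the nonbasic coordinates are frozen at \(\hat\nu_{\mathcal N}\). Consequently the term \(\max_{x\in X}\|c'_{\mathcal N}(x)\|\,r_2\) vanishes, and we need not bound the nonbasic Jacobian block at all. The hypotheses \(m\le n\) and invertibility of \(F'_{\mathcal B}(\hat\nu)\) are assumed. (If \(m=n\), the remark after Theorem~\ref{thm:CW} covers the square case.)

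Next we supply the Lipschitz constant. By Lemma~\ref{lipschitzbound}, \(\|F'(u)-F'(v)\|_\infty\le 2(d+1)\|u-v\|_\infty\) for all \(u,v\). Deleting columns from a matrix does not increase its \(\infty\)-norm, since the norm is the maximum row sum of absolute values. Hence \(\|F'_{\mathcal B}(x)-F'_{\mathcal B}(\hat\nu)\|_\infty\le 2(d+1)\|x-\hat\nu\|_\infty\), and we may take \(K=2(d+1)\).

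The sufficient condition of Theorem~\ref{thm:CW} then reads
\[
\eta+\beta\cdot\tfrac12\cdot 2(d+1)\,r_1^2\le r_1,
\qquad\text{i.e.}\qquad
\beta(d+1)r_1^2-r_1+\eta\le 0 .
\]
This quadratic in \(r_1\) has discriminant \(1-4\eta\beta(d+1)=D\). When \(D\ge0\), its smaller root is
\[
r_1=\frac{1-\sqrt D}{2\beta(d+1)}.
\]
This root is nonnegative and satisfies the inequality with equality, which is allowed since the condition is non-strict. Here \(\beta>0\), because \(F'_{\mathcal B}(\hat\nu)^{-1}\neq0\). Theorem~\ref{thm:CW} therefore yields a zero of \(F\) in \(X\).

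The only point needing care is the bookkeeping that the norm in Theorem~\ref{thm:CW} is consistent, i.e.\ that the \(\|x-\hat x\|\) in the Lipschitz hypothesis is the full \(\infty\)-norm. On \(X\) with \(r_2=0\) this equals \(\|x_{\mathcal B}-\hat\nu_{\mathcal B}\|_\infty\le r_1\), so the estimate \(\tfrac12K(r_1+r_2)r_1\) is exactly what we used. Beyond this consistency check, the argument is a routine substitution.
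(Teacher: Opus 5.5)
Your proposal is correct and follows the paper's own proof: you set $r_2=0$, take $K=2(d+1)$ from the Lipschitz lemma, and take the smaller root of the quadratic $\beta(d+1)r_1^2-r_1+\eta\le 0$, whose discriminant is $D$. Your remark that restricting to the column block $F'_{\mathcal B}$ cannot increase the $\infty$-norm (a maximum row sum) makes explicit a step the paper uses without comment.
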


\begin{proof}
    For simplicity, we fix \(r_2 = 0\).
    Then the inequality in Theorem~\ref{thm:CW} becomes
    \[
        \frac{1}{2} \beta K r_1^2 - r_1 + \eta \leq 0.
    \]
    If there is a nonnegative \(r_1\) satisfying the inequality, then \(F\) admits an exact solution in
    \[
        X =
        \bigl\{
        v \in \mathbb{R}^{N(d+1)}
        \mid
        \|v_{\mathcal B}-\hat \nu_{\mathcal B}\|_\infty \le r_1,\;
        v_{\mathcal N}=\hat \nu_{\mathcal N}
        \bigr\}
    \]
    by Theorem~\ref{thm:CW}.
    This quadratic (in \(r_1\), with positive leading coefficient
    \(\beta K/2\)) has a real root if and only if its discriminant
    \[
        D = 1 - 2 \beta K \eta = 1 - 4 \eta \beta (d+1)
    \]
    is nonnegative, in which case the set of \(r_1\) satisfying the
    inequality is the interval between the two roots. Since the product
    of the roots is \(\eta/(\beta K/2) \ge 0\) and their sum is
    \(1/(\beta K/2) > 0\), both roots are nonnegative whenever they are
    real; hence \(D\ge0\) is exactly the condition for a nonnegative
    \(r_1\) to exist, and the tightest such bound is the smaller root
    \[
        r_1 = \frac{1 - \sqrt{D}}{2 \beta (d + 1)}.
    \]
\end{proof}

It remains to check the inequality
\[
    \nu_i^{\mathsf T} L_d \nu_j < -1 \quad\text{if } i \neq j \text{ and } \{i,j\} \notin E(\mathcal G).
\]
We verify this using ball arithmetic, implemented via \texttt{Arb}. Since
rigor is essential in this section, every computation except the
selection of \(\mathcal B\) is carried out in ball arithmetic, and we
report Verified only when the inequality holds for the entire ball,
i.e., for every point in its enclosure. (The selection of \(\mathcal B\)
itself need not be certified, since its validity is confirmed rigorously
by the successful ball-arithmetic inversion of \(F'_{\mathcal B}(\hat\nu)\)
in Theorem~\ref{thm:criterion}.)

By the definition of \(X\) in Theorem~\ref{thm:criterion}, the exact
solution \(\nu\) satisfies \(\nu_{\mathcal N} = \hat\nu_{\mathcal N}\) and
\(\|\nu_{\mathcal B}-\hat\nu_{\mathcal B}\|_\infty \le r_1\). We therefore
construct the ball matrix whose entries agree with \(\hat\nu\) at the
midpoint, with radius \(r_1\) at indices in \(\mathcal B\) and radius
\(0\) at indices in \(\mathcal N\).

If the ball evaluation of \(\nu_i^{\mathsf T} L_d \nu_j\) is contained in
\((-\infty,-1)\) for every \(\{i,j\}\notin E(\mathcal G)\), then the
exact solution \(\nu\) guaranteed by Theorem~\ref{thm:criterion} also
satisfies this inequality. Together with Theorem~\ref{thm:criterion},
this proves the existence of the required normal matrix.  To obtain the
oriented realization, we also evaluate
\(\langle O_d,\nu_i\rangle\) on the same ball matrix and require its upper
endpoint to be negative for every \(i\).  This final test is linear and is
independent of the existence criterion.

\subsection{Automated implementation}\label{subsec:implementation}
To automate the process, we do the following.

\begin{algorithm}[H]
\caption{Automated RACG-HRCP}\label{alg:racg-hrcp}
\KwIn{\(\texttt{num\_seeds}\); \(\texttt{prelude}\);
\(\texttt{max\_chunks}\); \(\texttt{max\_attempts}\), together with the
search tolerances and block sizes}
\KwOut{Verified or Inconclusive}
\(\texttt{seeds} \gets \texttt{num\_seeds}\) random seeds\;
\For{\(\texttt{seed} \in \texttt{seeds}\)}{
    \(\hat \nu \gets\) random initial point in \(\mathcal M^N\) from \(\texttt{seed}\)\;
    apply \(\texttt{prelude}\) blocks of ADMM to \(\hat\nu\)\;
    \While{\(\hat\nu\) has not met the feasibility thresholds}{
        \If{the RO budget is exhausted or the search has stalled}{
            skip to the next seed\;
        }
        \(\hat \nu \gets \texttt{RO}(\hat \nu)\)\;
    }
    polish by further RO and retain the admissible iterate with
    the largest non-edge margin\;
    \For{\(\texttt{attempt} \gets 1\) \KwTo
    \(\texttt{max\_attempts}\)}{
        \If{\(\texttt{Verify}(\hat \nu)=\texttt{Verified}\)}{
            \Return Verified\;
        }
        \If{\(\texttt{attempt}<\texttt{max\_attempts}\)}{
            refine \(\hat\nu\) by the prescribed number of ADMM blocks\;
        }
    }
}
\Return Inconclusive\;
\end{algorithm}

An optional ADMM prelude substantially improved the empirical success
rate in the computations reported here.  Because distinct random seeds
can occasionally be carried to the same iterate during this prelude, it
may also reduce the diversity of the subsequent search.  The prelude
length and the other search parameters are therefore recorded in each
certificate; they affect reproducibility and performance, but not the
validity of a successful certificate.

Our implementation \cite{RACGRealizer} provides three entry points.
Running \path{app_auto.py} executes
Algorithm~\ref{alg:racg-hrcp} automatically; \path{run.py} provides the
same pipeline from the command line, while \path{app_manual.py} allows
the user to perform the optimization and refinement stages
interactively.

\subsection{Scope of the common certification}
\label{subsec:scope-certification}

Although the finite realizations above arise from different
combinatorial ansatzes, their equality and strict-inequality conditions
fit a common interval-certification framework.  It is important to
distinguish this final certification from the method used to discover an
approximate solution: the latter may vary, whereas the validity of the
output is decided independently.

\begin{prop}\label{prop:common-certification}
Every finite numerical realization used in
Propositions~\ref{prop:certified-453},
\ref{prop:nonrightcertificate}, and~\ref{prop:asymmetric-certificate},
as well as the
finite candidates recorded in
Appendix~\ref{appendix:numerical-motivation}, admits a rigorous
verification within the certification framework of this section.
\end{prop}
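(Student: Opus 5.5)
The plan is to reduce every finite realization to the single template of Theorem~\ref{thm:criterion} followed by a ball-arithmetic test of the strict inequalities. There are three cases: the collar data of Proposition~\ref{prop:certified-453}, the weighted data of Proposition~\ref{prop:nonrightcertificate}, and the asymmetric data of Proposition~\ref{prop:asymmetric-certificate}, together with the appendix candidates. For each, I will specify three things: (i) the unknowns, (ii) a smooth equality map $F$ whose zeros are exactly the edge and normalization conditions, and (iii) the finite list of strict inequalities (non-edge separation, spacelikeness, orientation $\langle O,n_v\rangle<0$). The framework then amounts to two steps. First, certify an exact zero of $F$ in an explicit box around the stored approximation. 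Second, evaluate every inequality on that box with outward rounding and check that each resulting ball lies strictly on the correct side.

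\textbf{Equality part.} For the asymmetric nerves $\mathcal G_p$ this is literally the setting of the section. The unknown is $\nu\in\operatorname{Mat}_{6\times N}$, the map is $F$, the counts are $N+|E(\mathcal G_p)|\le 6N$ (checked from Table~\ref{tab:asymmetric-graph-sizes}), and Theorem~\ref{thm:criterion} applies once ball inversion of $F'_{\mathcal B}(\hat\nu)$ succeeds and $D\ge0$.

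For the cyclic collar and the weighted systems, the unknowns are instead the free coordinates of the initial poles; the other poles are images under the exact orthogonal matrix $S$. Here I would note that the equality map is still polynomial in those coordinates, with coefficients $\cos(2\pi k/pm)$, $\sin(2\pi k/pm)$ that Arb encloses rigorously. The number of equations equals the number of unknowns, so we are in the square case $\mathcal N=\emptyset$ of Theorem~\ref{thm:CW}, which is the Newton--Kantorovich statement. The Krawczyk inclusion actually used in those proofs yields the same conclusion, a unique zero in $\mathbf X$. Both criteria are instances of the same principle: the certificate is a box on which a contraction condition is verified in interval arithmetic.

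In the weighted case, the squared angle equations $H=\cos^2(\pi/k)$ replace the orthogonality equations, and the sign of the unsquared inner product is checked afterwards. This check is an inequality and so falls under the second step. A Lipschitz constant for $F'$ is not needed separately, because Krawczyk encloses $DF(\mathbf X)$ directly; if one insists on the Chen--Womersley form, a bound analogous to Lemma~\ref{lipschitzbound} holds, since each equation is quadratic in the unknowns after the fixed rotation matrices are applied.

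\textbf{Inequality part.} Once the root box is known, every non-edge inner product, every norm, and every orientation sign is a polynomial expression in the box entries, so a single ball evaluation decides it. I would also fold in the signature claims. The rank of the pole matrix is certified by a nonvanishing $6\times6$ minor enclosed on the box, and Sylvester's law of inertia then gives the signature without any eigenvalue computation. The main obstacle I expect is uniformity of the reduction rather than any single estimate. The three problems use different parametrizations: rescaled poles with last coordinate $1$ in the cyclic cases, and Lorentz-unit columns in the asymmetric case. I would handle this by passing to normalized poles $u_X$, observing that normalization and the positive rescaling of Subsection~\ref{subsec:discrete} preserve all the verified sign conditions, and then checking that the appendix candidates have fixed parameters and equation counts of the same shape, so that the same two-step certificate applies to them verbatim.
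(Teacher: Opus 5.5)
Your proposal is correct and follows essentially the paper's own route. For the asymmetric nerves it applies Theorem~\ref{thm:criterion} and then ball-evaluates the non-edge and orientation inequalities; for the collar, weighted, and appendix systems it uses a square Newton--Kantorovich/Krawczyk certificate on the gauge-fixed coordinates, followed by the same interval evaluation.

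Two details need correcting, though neither affects validity, since the Krawczyk test encloses $D\mathcal F(\mathbf X)$ directly. First, those systems are square only after some coordinates are fixed at exact values: $a=b=0$ and $x_c,c,x_d,d,x_e,e,x_f,f$ for $G_{4,5,3}$; $a,c,w_c$ in the weighted case; $a,b,c,x_c$ in the appendix. Second, the squared weight-$3$ angle equations are quartic rather than quadratic.
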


\begin{proof}
For Proposition~\ref{prop:asymmetric-certificate}, this is precisely the
underdetermined inclusion test of Theorem~\ref{thm:criterion}, followed
by interval evaluation of every non-edge.  The complete ancillary data are
available in \cite{HCPd,RACGRealizer}, and the approximate Lorentz-unit
\(p=5\) pole matrix is displayed in
Appendix~\ref{appendix:asymmetric-data}.  In the
symmetric and weighted
constructions, gauge fixing reduces the relevant edge equations to
square systems.  The square case is the \(m=n\) specialization described
after Theorem~\ref{thm:CW}; equivalently, one may use the interval
Newton--Krawczyk form implemented in the ancillary programs
\path{verify_g453_arb.py}, \path{verify_nonright_arb.py}, and
\path{verify_sections32_33_arb.py}~\cite{HCPd}.  In every case the
existence of an exact zero and all strict non-edge inequalities are
verified with directed ball arithmetic.

The uniform families of Proposition~\ref{prop:uniform-smallp} are
different: their validity is proved analytically for all \(m\geq5\), so
no finite computation is used in their proof.  Individual members may
of course be checked by the same framework.
\end{proof}

%\appendix

%\section{Experimental motivation for the uniform construction}
\appendix

\section{Computational origin of the uniform ansatz}
\label{appendix:numerical-motivation}

We explain briefly how finite computations within the cyclic ansatz of
Subsection~\ref{subsec:construction} led to the normalized variables used
in the uniform construction.  The finite solutions considered here lie
in a parameter family different from that of
Proposition~\ref{prop:uniform-smallp}; in particular, their values of
\(a\) and \(b\) are not specializations of the parameters used there.
They are included only to describe the equation-reduction strategy and
the computational origin of the uniform ansatz.

After imposing the four edge equations
\[
\begin{aligned}
 \langle n(A_1),n(A_2)\rangle
 &=\langle n(A_1),n(B_1)\rangle
  =\langle n(A_1),n(B_2)\rangle\\
 &=\langle n(B_1),n(B_2)\rangle=0,
\end{aligned}
\]
the coordinates \(x_a,x_b,y_b,z_b\) are determined by \(a\) and \(b\).
Thus one may take
\[
 a,\qquad b,\qquad c,\qquad x_c
\]
as the remaining free parameters.  For fixed values of these
parameters, the three equations
\[
\begin{aligned}
 \langle n(B_1),n(C_1)\rangle
 &=\langle n(B_1),n(C_2)\rangle\\
 &=\langle n(C_1),n(C_2)\rangle=0
\end{aligned}
\]
determine \(y_c,z_c,w_c\), whenever the corresponding real solution
exists.  It then remains to verify spacelikeness of the pole vectors and
strict hyperparallelism for every non-edge pair.

For \(p=2\), the finite search was carried out using
\[
 a=-\sin\frac{\pi}{2m},
 \qquad
 b=\sin\frac{\pi}{2m}.
\]
With this choice, the loci on which selected non-edge pairs become
asymptotic bound a feasible region in the \((c,x_c)\)-plane.  Interior
points of this region give the numerical candidates.  Some
representative centers are listed in
Table~\ref{tab:finite-candidates}; the final row records one example for
\(p=3\).

\begin{table}[htbp]
\centering
\small
\setlength{\tabcolsep}{4pt}
\begin{tabular}{cccccc}
\toprule
\(p\) & \(m\) & \(a\) & \(b\) & \(c\) & \(x_c\)\\
\midrule
\(2\) & \(5\)
& \(-\sin(\pi/10)\) & \(\sin(\pi/10)\)
& \(0.799017656466\) & \(0.122273943505\)\\
\(2\) & \(9\)
& \(-\sin(\pi/18)\) & \(\sin(\pi/18)\)
& \(0.605581569857\) & \(0.605581569857\)\\
\(2\) & \(14\)
& \(-\sin(\pi/28)\) & \(\sin(\pi/28)\)
& \(0.433670513698\) & \(0.819472588890\)\\
\(3\) & \(6\)
& \(-0.4\) & \(0.05\)
& \(0.642452725867\) & \(0.348936391419\)\\
\bottomrule
\end{tabular}
\caption{Representative centers arising from the finite computations.}
\label{tab:finite-candidates}
\end{table}
\tablegap
The decimal entries above are numerical
centers rather than exact solutions.  The ancillary program
\path{verify_sections32_33_arb.py}, archived in the supplementary
repository~\cite{HCPd}, certifies all candidates with
\[
 (p,m)=(2,m),\qquad 5\leq m\leq14,
\]
together with the candidate for \((p,m)=(3,6)\).  For each candidate, a
Krawczyk inclusion~\cite{Krawczyk:1969} proves the existence and local
uniqueness of an exact solution near the stored center.  Arb ball
arithmetic~\cite{Johansson:2017}, accessed through
\texttt{python-flint}~\cite{pythonflint}, then verifies spacelikeness,
all strict non-edge inequalities, and non-positivity of the
off-diagonal entries of the normalized Gram matrix.  It also certifies
the Gram inertia
\[
 (5,1,(2p+1)m-5).
\]
These finite certificates are not used in the proof of
Proposition~\ref{prop:uniform-smallp}.

The conceptual contribution of the finite calculations is the repeated
appearance of the normalized radial quantities
\[
 \frac{x_b^2+y_b^2}{1-b^2},
 \qquad
 \frac{x_c^2+y_c^2}{1-c^2}.
\]
Both the edge equations and the non-edge inequalities depend naturally
on these combinations rather than on the individual coordinates.  This
suggests prescribing the normalized radii directly when constructing a
family uniform in \(m\).

In the notation of the uniform construction, put
\[
 t=\frac{2\pi}{m},
 \qquad
 U=1-\rho_pt^2,
 \qquad
 V=1-\sigma_pt^2.
\]
The coordinates defined in
\eqref{eq:b-value}--\eqref{eq:yc-value} then satisfy
\[
 \frac{x_b^2+y_b^2}{1-b^2}=U,
 \qquad
 \frac{x_c^2+y_c^2}{1-c^2}=V.
\]
Thus the quantities \(U\) and \(V\) used in the uniform-separation
argument arise naturally from the finite equation reduction.  The
constants \(\rho_p\) and \(\sigma_p\) are chosen to give a uniform
positive margin in every non-edge inequality; they are not obtained by
fitting the finite data in Table~\ref{tab:finite-candidates}.

Recall that
\[
 1-b^2=R_p(m)^{-1}.
\]
Writing
\[
 (1-b^2)^{-1/2}=\cosh r,
 \qquad
 \Lambda_p(m)=\cosh s,
\]
the relation imposed by the \(B_1C_1\) and \(B_1C_2\) edge equations
becomes
\[
 (1-c^2)^{-1/2}=\cosh(r+s).
\]
Consequently, the hyperbolic addition formula gives
\[
 \mathcal M_p(m)
 =\sqrt{R_p(m)}\,\Lambda_p(m)
  +\sqrt{R_p(m)-1}\sqrt{\Lambda_p(m)^2-1}
 =\cosh(r+s),
\]
which is precisely the definition in \eqref{eq:calM}.  The finite and
uniform constructions therefore share the same equation-reduction
mechanism, although the closed-form uniform family is a distinct
solution within the cyclic ansatz.

\clearpage
\section{A complete asymmetric pole configuration}
\label{appendix:asymmetric-data}

For \(p=5\), the following table gives the complete approximate
Lorentz-unit pole matrix
\[
 V^{(0)}=(n_v^{(0)})_{v\in V(\mathcal G_5)}
\]
in the notation of Subsection~\ref{subsection:asysub}.  Each stored row
has Lorentz norm \(1\), up to floating-point roundoff.  The
ball-arithmetic certificate proves the existence of an
exact normal matrix in a neighborhood of \(V^{(0)}\).  The entries below
are rounded; the full-precision matrices for \(2\leq p\leq8\), together
with their certificates, are contained in the ancillary material
\cite{HCPd,RACGRealizer}.

\begingroup
\scriptsize
\setlength{\tabcolsep}{1.5pt}
\renewcommand{\arraystretch}{0.96}
\begin{longtable}{@{}lrrrrrr@{}}
\toprule
\(v\) & \(x_1\) & \(x_2\) & \(x_3\) & \(x_4\) & \(x_5\) & \(x_6\)\\
\midrule
\endfirsthead
\toprule
\(v\) & \(x_1\) & \(x_2\) & \(x_3\) & \(x_4\) & \(x_5\) & \(x_6\)\\
\midrule
\endhead
\midrule
\multicolumn{7}{r}{\emph{continued on the next page}}\\
\endfoot
\bottomrule
\endlastfoot
\(A_1\) & \(-0.8412743229\) & \(-0.9185812809\) & \(0.1215020206\) & \(-0.1325873731\) & \(-0.0468118659\) & \(0.7655504942\)\\
\(A_2\) & \(-0.3350660655\) & \(1.1795099357\) & \(-0.4138040208\) & \(-0.2053772843\) & \(-0.1917189701\) & \(0.8681490176\)\\
\(A_3\) & \(1.2618293329\) & \(-0.2752489854\) & \(0.0659044087\) & \(-0.0232377524\) & \(0.1082192127\) & \(0.8273874858\)\\
\(C_1\) & \(-0.0127144284\) & \(-0.0101870366\) & \(-0.9290145886\) & \(-1.5189084262\) & \(1.4972297798\) & \(2.1005031205\)\\
\(C_2\) & \(-0.2003208916\) & \(0.0208173399\) & \(-0.9817971564\) & \(0.8107452419\) & \(1.3813542909\) & \(1.6031016817\)\\
\(C_3\) & \(-0.0079569583\) & \(-0.1507297561\) & \(-0.6707311712\) & \(1.5110329086\) & \(-0.8279547338\) & \(1.5624956220\)\\
\(C_4\) & \(0.3265250655\) & \(-0.2258662292\) & \(-0.6863844835\) & \(-0.5217739594\) & \(-1.7326722025\) & \(1.7038658566\)\\
\(C_5\) & \(0.1818071825\) & \(0.3596708420\) & \(1.0479250947\) & \(-1.6438224536\) & \(-0.7279911652\) & \(1.8688732878\)\\
\(C_6\) & \(-0.0554673267\) & \(0.2745165678\) & \(1.2333630662\) & \(0.4589247211\) & \(-1.4249795946\) & \(1.6854670474\)\\
\(C_7\) & \(-0.1695192333\) & \(0.3570829860\) & \(0.9246770075\) & \(1.3498828187\) & \(0.6821591311\) & \(1.5161785191\)\\
\(C_8\) & \(-0.0463413338\) & \(-0.2345743613\) & \(0.8023703295\) & \(-0.4516848647\) & \(2.1513308904\) & \(2.1291347098\)\\
\(e_{A_1A_2}\) & \(-1.5380130351\) & \(0.3285685508\) & \(-0.3222242225\) & \(-0.3523765412\) & \(-0.1804934002\) & \(1.3168207671\)\\
\(e_{A_1A_3}\) & \(0.5332638824\) & \(-1.5898142094\) & \(0.2453605481\) & \(-0.1031906699\) & \(0.0719857291\) & \(1.3740129718\)\\
\(e_{A_1C_1}\) & \(-0.8534130988\) & \(-0.8722495646\) & \(-0.5908997672\) & \(-1.7233238653\) & \(0.8685586050\) & \(2.1360091175\)\\
\(e_{A_1C_2}\) & \(-0.9252866072\) & \(-1.0508318732\) & \(-1.2918349023\) & \(0.3274586313\) & \(1.0372578548\) & \(1.9525299892\)\\
\(e_{A_1C_3}\) & \(-0.7263217011\) & \(-1.4611934597\) & \(-1.0252281812\) & \(1.2817735874\) & \(-0.7305152110\) & \(2.2114064227\)\\
\(e_{A_1C_4}\) & \(-0.6060521724\) & \(-1.3025553949\) & \(-1.1054621858\) & \(-0.8146949480\) & \(-1.5100528600\) & \(2.2869158129\)\\
\(e_{A_1C_5}\) & \(-0.6313125379\) & \(-0.8044339567\) & \(0.8959008841\) & \(-1.8423036672\) & \(-0.6507050693\) & \(2.1600481001\)\\
\(e_{A_1C_6}\) & \(-0.7880200630\) & \(-1.0495995062\) & \(1.6343655939\) & \(0.5762326126\) & \(-1.3733495539\) & \(2.3689488510\)\\
\(e_{A_1C_7}\) & \(-1.3885611239\) & \(-0.4730438080\) & \(0.8178643351\) & \(1.3010136980\) & \(0.5812136079\) & \(1.9624526517\)\\
\(e_{A_1C_8}\) & \(-0.8609928978\) & \(-1.1877989461\) & \(1.3350364671\) & \(-0.9602209950\) & \(1.7667104805\) & \(2.6415502562\)\\
\(e_{A_2A_3}\) & \(1.1543279436\) & \(1.1257717097\) & \(-0.3767688272\) & \(-0.2460503825\) & \(-0.1166577238\) & \(1.3475679975\)\\
\(e_{A_2C_1}\) & \(-0.1761436852\) & \(0.9906304512\) & \(-1.2166286637\) & \(-1.8582750630\) & \(0.5947915099\) & \(2.3020694482\)\\
\(e_{A_2C_2}\) & \(-0.6897235879\) & \(1.2802386521\) & \(-1.8787032768\) & \(0.8031917577\) & \(0.9713256299\) & \(2.4965668658\)\\
\(e_{A_2C_3}\) & \(-0.0211383562\) & \(1.3503011384\) & \(-1.0914385608\) & \(1.4331340264\) & \(-0.9695427435\) & \(2.2380537070\)\\
\(e_{A_2C_4}\) & \(-0.0807326488\) & \(0.7438377993\) & \(-1.8441698554\) & \(-0.8173123965\) & \(-1.5514146791\) & \(2.4567584228\)\\
\(e_{A_2C_5}\) & \(-0.5655957990\) & \(1.4229924536\) & \(0.8674198674\) & \(-1.5698632205\) & \(-0.1394451434\) & \(2.1403595110\)\\
\(e_{A_2C_6}\) & \(-0.1299908159\) & \(1.7274657316\) & \(0.9433606665\) & \(0.1574784067\) & \(-1.4272029448\) & \(2.2254600664\)\\
\(e_{A_2C_7}\) & \(-0.5050137958\) & \(1.5541291654\) & \(0.4034178455\) & \(0.8613229204\) & \(0.7081631786\) & \(1.7539882028\)\\
\(e_{A_3C_1}\) & \(1.3772103283\) & \(-0.2239744123\) & \(-0.6524251213\) & \(-1.4462737403\) & \(1.4370855145\) & \(2.3514790819\)\\
\(e_{A_3C_2}\) & \(1.2408355256\) & \(-0.3613859310\) & \(-1.0538778454\) & \(0.6994997824\) & \(1.4633760585\) & \(2.1004048515\)\\
\(e_{A_3C_3}\) & \(1.3304194383\) & \(-0.5086201760\) & \(-0.9796278829\) & \(1.1117755706\) & \(-0.8339294172\) & \(1.9798647293\)\\
\(e_{A_3C_4}\) & \(1.6030984975\) & \(-0.6386282645\) & \(-1.0987274798\) & \(-1.0975556169\) & \(-1.2459875447\) & \(2.4376394773\)\\
\(e_{A_3C_5}\) & \(1.3933292304\) & \(-0.3116570492\) & \(0.8596875966\) & \(-1.8662561309\) & \(-0.2889894804\) & \(2.3117063158\)\\
\(e_{A_3C_6}\) & \(1.1896388910\) & \(-0.0661131577\) & \(1.3455522901\) & \(0.0440826448\) & \(-1.0197352612\) & \(1.8088465630\)\\
\(e_{A_3C_7}\) & \(1.2896890517\) & \(0.3556395162\) & \(1.1836450684\) & \(1.1655603204\) & \(0.6610361265\) & \(1.9965702053\)\\
\(e_{A_3C_8}\) & \(1.2980350577\) & \(-0.3929344447\) & \(1.2373875069\) & \(-0.5180475809\) & \(1.8555334455\) & \(2.4661301658\)\\
\(e_{C_1C_2}\) & \(-0.1362128994\) & \(0.5543342947\) & \(-1.5741524998\) & \(-0.5086881494\) & \(2.2244117239\) & \(2.6477476669\)\\
\(e_{C_1C_8}\) & \(-0.4524177615\) & \(0.4241863083\) & \(0.2069486258\) & \(-1.6694778349\) & \(2.4228320520\) & \(2.8433633222\)\\
\(e_{C_2C_3}\) & \(-0.1351687241\) & \(-0.1511818839\) & \(-1.4895662151\) & \(2.0053241138\) & \(0.4689808798\) & \(2.3454641167\)\\
\(e_{C_2C_7}\) & \(0.1881574267\) & \(-0.1350565558\) & \(0.0246954698\) & \(2.1615470351\) & \(2.0252340073\) & \(2.7978762885\)\\
\(e_{C_2C_8}\) & \(-0.6525857048\) & \(-0.0778244068\) & \(-0.0924610244\) & \(0.4061599391\) & \(2.9633956550\) & \(2.8960582670\)\\
\(e_{C_3C_4}\) & \(-0.2072703306\) & \(-0.3492960198\) & \(-1.2257482084\) & \(0.8507754957\) & \(-2.3487035092\) & \(2.6282417104\)\\
\(e_{C_3C_6}\) & \(0.1967260672\) & \(0.0785483172\) & \(0.5221856515\) & \(1.8289224306\) & \(-2.0583403316\) & \(2.6266463439\)\\
\(e_{C_3C_7}\) & \(0.0294364728\) & \(-0.0221018271\) & \(0.3539818740\) & \(2.7343131342\) & \(-0.1505544883\) & \(2.5740616026\)\\
\(e_{C_4C_5}\) & \(0.3144691459\) & \(0.3769863644\) & \(0.0631005886\) & \(-1.7843838589\) & \(-1.8756526402\) & \(2.4386655838\)\\
\(e_{C_4C_6}\) & \(-0.0903188973\) & \(-0.0390230165\) & \(0.5159040066\) & \(-0.1587114844\) & \(-2.9295121529\) & \(2.8076801870\)\\
\(e_{C_5C_6}\) & \(-0.4681061699\) & \(0.4315403123\) & \(2.0609347497\) & \(-1.0049376961\) & \(-1.6601182690\) & \(2.7237281271\)\\
\(e_{C_6C_7}\) & \(-0.2490180799\) & \(0.8090529828\) & \(1.8175493763\) & \(1.6611625696\) & \(-0.6820714318\) & \(2.4989487768\)\\
\(e_{C_7C_8}\) & \(-0.2953064107\) & \(0.0117692547\) & \(1.7289323108\) & \(0.6903958542\) & \(2.1314203788\) & \(2.6638600927\)\\
\(f_{A_1;A_2C_1}\) & \(-1.7801693046\) & \(-0.5969787055\) & \(-1.5302853737\) & \(-1.4988443947\) & \(0.2173342010\) & \(2.6759910925\)\\
\(f_{A_1;A_2C_2}\) & \(-1.9778086251\) & \(-0.4182035027\) & \(-1.3669382029\) & \(0.6249415532\) & \(0.3013504250\) & \(2.3316314463\)\\
\(f_{A_1;A_2C_4}\) & \(-1.9107568297\) & \(-0.5603496713\) & \(-1.1520628218\) & \(-0.8783064620\) & \(-1.7459396021\) & \(2.8481501887\)\\
\(f_{A_1;A_2C_5}\) & \(-1.9092217882\) & \(-0.0999780268\) & \(0.5648392788\) & \(-1.8791882844\) & \(-0.8366283615\) & \(2.6842992524\)\\
\(f_{A_1;A_2C_7}\) & \(-2.3480367026\) & \(0.1802104038\) & \(0.8514520553\) & \(0.3840621757\) & \(0.5744669838\) & \(2.3975485020\)\\
\(f_{A_1;A_3C_2}\) & \(0.0048009440\) & \(-1.9265064183\) & \(-0.9175746326\) & \(-0.1793787218\) & \(0.9794548728\) & \(2.1318775314\)\\
\(f_{A_1;A_3C_3}\) & \(0.1009160073\) & \(-2.4469912894\) & \(-0.5138117107\) & \(0.8511458052\) & \(-1.0439320521\) & \(2.6601120646\)\\
\(f_{A_1;A_3C_5}\) & \(0.2029558943\) & \(-1.9188886575\) & \(0.5181302757\) & \(-1.7051858713\) & \(-0.5515983584\) & \(2.4907234616\)\\
\(f_{A_1;A_3C_6}\) & \(0.0892992557\) & \(-2.0088953382\) & \(1.6910168478\) & \(0.2161503092\) & \(-0.8941437934\) & \(2.5979582167\)\\
\(f_{A_1;A_3C_8}\) & \(0.0203082820\) & \(-2.2307165937\) & \(1.4451018735\) & \(-1.1220765541\) & \(1.2922878237\) & \(2.9989818247\)\\
\(f_{A_1;C_1C_8}\) & \(-1.3594763480\) & \(-0.9044884080\) & \(0.8225814255\) & \(-2.1486348134\) & \(1.4183217709\) & \(2.9951934211\)\\
\(f_{A_1;C_3C_4}\) & \(-0.9219069136\) & \(-1.6128628225\) & \(-1.9392352397\) & \(0.1560596384\) & \(-0.9493471835\) & \(2.6716075389\)\\
\(f_{A_1;C_6C_7}\) & \(-1.3957707548\) & \(-0.9150398297\) & \(1.2417856581\) & \(1.6586404139\) & \(-0.7939195693\) & \(2.5901547861\)\\
\(f_{A_2;A_1C_1}\) & \(-1.4969830226\) & \(0.7637207895\) & \(-1.7382752965\) & \(-1.5941914265\) & \(0.3596869925\) & \(2.7416509000\)\\
\(f_{A_2;A_1C_2}\) & \(-1.9085205882\) & \(0.9875385814\) & \(-1.7398268802\) & \(0.6609778406\) & \(0.3208328078\) & \(2.6803929292\)\\
\(f_{A_2;A_1C_4}\) & \(-1.4872587450\) & \(0.5855326701\) & \(-1.7743708939\) & \(-0.6320322964\) & \(-1.4524888746\) & \(2.6855852074\)\\
\(f_{A_2;A_1C_5}\) & \(-1.5882964752\) & \(1.1351617075\) & \(0.0274193651\) & \(-2.1332967976\) & \(-0.6635499190\) & \(2.7934357588\)\\
\(f_{A_2;A_1C_7}\) & \(-1.8892599491\) & \(1.4501856508\) & \(0.5124736404\) & \(0.4357803540\) & \(0.3063297489\) & \(2.2844503149\)\\
\(f_{A_2;A_3C_1}\) & \(0.8433278348\) & \(1.7345923181\) & \(-0.5138657261\) & \(-1.9152224056\) & \(0.4794322277\) & \(2.6233570964\)\\
\(f_{A_2;A_3C_3}\) & \(1.0121321439\) & \(1.6870763579\) & \(-1.0418114686\) & \(0.6375054572\) & \(-1.5193598000\) & \(2.5828040305\)\\
\(f_{A_2;A_3C_4}\) & \(0.9873751294\) & \(1.2621061279\) & \(-2.0740087802\) & \(-0.9691410479\) & \(-0.7855049274\) & \(2.7249929003\)\\
\(f_{A_2;A_3C_6}\) & \(0.9289319391\) & \(1.9874993427\) & \(0.5505808281\) & \(-0.7142985971\) & \(-1.3722673902\) & \(2.5513815295\)\\
\(f_{A_2;A_3C_7}\) & \(0.6995112400\) & \(1.9887414718\) & \(0.1900176091\) & \(0.2389423645\) & \(0.9016524654\) & \(2.0858058228\)\\
\(f_{A_2;C_2C_3}\) & \(0.1662444825\) & \(1.8711764325\) & \(-1.8130146922\) & \(1.5881030024\) & \(0.2914181724\) & \(2.9022330095\)\\
\(f_{A_2;C_5C_6}\) & \(-0.6148047792\) & \(2.3998025651\) & \(1.7062064708\) & \(-0.6741401527\) & \(-0.6415157623\) & \(2.9856632777\)\\
\(f_{A_3;A_1C_2}\) & \(1.0326514929\) & \(-1.6402181110\) & \(-0.5353257658\) & \(0.6381320481\) & \(1.2132383338\) & \(2.2186522933\)\\
\(f_{A_3;A_1C_3}\) & \(1.3230478720\) & \(-1.8690783802\) & \(-0.8164604490\) & \(0.5324899380\) & \(-0.8828613966\) & \(2.4440759210\)\\
\(f_{A_3;A_1C_5}\) & \(1.5307547761\) & \(-1.8259935055\) & \(1.2287316271\) & \(-1.6675888754\) & \(-0.4483039747\) & \(3.0280477192\)\\
\(f_{A_3;A_1C_6}\) & \(1.3410144539\) & \(-1.6091422470\) & \(1.2397281544\) & \(0.3142958149\) & \(-1.1514869100\) & \(2.5197794341\)\\
\(f_{A_3;A_1C_8}\) & \(1.3141127413\) & \(-1.7501961829\) & \(1.2277952232\) & \(-0.9755240443\) & \(1.4801051638\) & \(2.9051537898\)\\
\(f_{A_3;A_2C_1}\) & \(1.8192636829\) & \(0.9532838026\) & \(-0.1691278544\) & \(-1.6316330506\) & \(0.9671728129\) & \(2.6162423911\)\\
\(f_{A_3;A_2C_3}\) & \(1.9777522113\) & \(0.7936675248\) & \(-0.7987146740\) & \(0.9101964278\) & \(-1.1441743609\) & \(2.5133542498\)\\
\(f_{A_3;A_2C_4}\) & \(2.0172508175\) & \(0.3743301413\) & \(-1.7583642176\) & \(-1.0209695115\) & \(-0.5603847252\) & \(2.7672510585\)\\
\(f_{A_3;A_2C_6}\) & \(1.8659269357\) & \(1.1087959056\) & \(0.9812123856\) & \(-0.6134856084\) & \(-0.9655292628\) & \(2.4459151221\)\\
\(f_{A_3;A_2C_7}\) & \(1.7115848012\) & \(1.3061338723\) & \(0.2147518691\) & \(0.9517713451\) & \(0.7237548307\) & \(2.2608220491\)\\
\(f_{A_3;C_1C_2}\) & \(2.0490220457\) & \(0.2942890139\) & \(-1.5058512751\) & \(-0.5322052891\) & \(2.0919179286\) & \(3.1956295969\)\\
\(f_{A_3;C_4C_5}\) & \(1.6068283732\) & \(-0.7226994596\) & \(-0.4856233484\) & \(-2.1175759699\) & \(-0.4569873531\) & \(2.6519780157\)\\
\(f_{A_3;C_7C_8}\) & \(1.5570880048\) & \(0.4934813209\) & \(1.8437219566\) & \(-0.0777380106\) & \(1.0894171234\) & \(2.5020452412\)\\
\(f_{C_1;A_1A_2}\) & \(-0.5979316020\) & \(-0.2335452976\) & \(-1.8876380457\) & \(-1.9515103523\) & \(0.5883576967\) & \(2.6701686150\)\\
\(f_{C_1;A_1C_8}\) & \(-0.2510874831\) & \(-0.3853439457\) & \(0.2567668615\) & \(-2.5167818161\) & \(1.6573265730\) & \(2.8910873701\)\\
\(f_{C_1;A_2A_3}\) & \(1.0370090062\) & \(0.7126529966\) & \(-0.8447663159\) & \(-2.6847403824\) & \(1.1788121385\) & \(3.1455239753\)\\
\(f_{C_1;A_3C_2}\) & \(1.0762033885\) & \(0.6092703719\) & \(-1.0940473079\) & \(-1.1270128447\) & \(2.7507003215\) & \(3.2500575137\)\\
\(f_{C_1;C_2C_8}\) & \(-1.1793129119\) & \(0.6904935352\) & \(-0.8754107573\) & \(-1.1915406513\) & \(2.3404640589\) & \(2.9208638097\)\\
\(f_{C_2;A_1A_2}\) & \(-1.2967742763\) & \(0.0923544744\) & \(-2.3481965695\) & \(0.5618663353\) & \(0.9900619272\) & \(2.7386303876\)\\
\(f_{C_2;A_1A_3}\) & \(0.4132492207\) & \(-1.3391999860\) & \(-1.8072025980\) & \(0.6725542550\) & \(1.3784267770\) & \(2.5656582697\)\\
\(f_{C_2;A_2C_3}\) & \(-0.5360307730\) & \(1.2577742865\) & \(-1.5517708363\) & \(2.3010043432\) & \(1.0657938028\) & \(3.1157431362\)\\
\(f_{C_2;A_3C_1}\) & \(1.0866908229\) & \(0.7928699053\) & \(-1.4763081903\) & \(0.2191999154\) & \(2.6053328464\) & \(3.1344590199\)\\
\(f_{C_2;C_1C_8}\) & \(-0.8631049359\) & \(1.0695888346\) & \(-0.7119178111\) & \(0.1132594380\) & \(2.5416343490\) & \(2.8050900591\)\\
\(f_{C_2;C_3C_7}\) & \(0.1329483816\) & \(-1.0380129024\) & \(-0.9341905188\) & \(2.6495833713\) & \(1.5653771679\) & \(3.2308753771\)\\
\(f_{C_2;C_7C_8}\) & \(-0.0805218482\) & \(-0.9585091372\) & \(-0.0133075551\) & \(1.5100313544\) & \(3.1770855017\) & \(3.5070596803\)\\
\(f_{C_3;A_1A_3}\) & \(0.6875257104\) & \(-1.9329284837\) & \(-1.2281255910\) & \(1.8515187860\) & \(-0.9682887816\) & \(3.0137852565\)\\
\(f_{C_3;A_1C_4}\) & \(-0.8463173214\) & \(-0.5864761340\) & \(-1.9791043322\) & \(1.0248908164\) & \(-1.3804439180\) & \(2.6330757335\)\\
\(f_{C_3;A_2A_3}\) & \(1.1037928316\) & \(0.7366436105\) & \(-1.7752922827\) & \(1.4336274124\) & \(-1.5100446840\) & \(2.8719658108\)\\
\(f_{C_3;A_2C_2}\) & \(0.2692732290\) & \(1.1475038312\) & \(-1.3911717896\) & \(2.7004564259\) & \(-0.0085361512\) & \(3.1011562096\)\\
\(f_{C_3;C_2C_7}\) & \(0.6169435550\) & \(-0.7331244399\) & \(-0.6983591222\) & \(3.0240726390\) & \(0.3440429096\) & \(3.1095300461\)\\
\(f_{C_3;C_4C_6}\) & \(-0.6356071782\) & \(-0.7626939955\) & \(-0.1535912827\) & \(1.6837616882\) & \(-2.7570908116\) & \(3.2320105297\)\\
\(f_{C_3;C_6C_7}\) & \(0.9723189798\) & \(0.3635610910\) & \(0.4697297754\) & \(2.6780184710\) & \(-1.2074271884\) & \(2.9879575421\)\\
\(f_{C_4;A_1A_2}\) & \(-0.9144922379\) & \(-0.1736068966\) & \(-1.7171864684\) & \(-1.5997737578\) & \(-1.9960088930\) & \(3.0591653032\)\\
\(f_{C_4;A_1C_3}\) & \(-0.1289984414\) & \(-1.1829113580\) & \(-2.1971098601\) & \(0.1318400072\) & \(-2.0925712463\) & \(3.1047460161\)\\
\(f_{C_4;A_2A_3}\) & \(1.1847364642\) & \(0.0802132983\) & \(-2.2932812317\) & \(-0.8410374320\) & \(-1.5277520804\) & \(2.9513630498\)\\
\(f_{C_4;A_3C_5}\) & \(0.7780228276\) & \(-0.6190260445\) & \(-0.7510760043\) & \(-2.2061948143\) & \(-1.5746520167\) & \(2.8105964599\)\\
\(f_{C_4;C_3C_6}\) & \(-0.4548958747\) & \(-1.0891407759\) & \(-0.2752460345\) & \(0.4119181458\) & \(-3.2481750671\) & \(3.3450315531\)\\
\(f_{C_4;C_5C_6}\) & \(-0.7897330148\) & \(0.2188812590\) & \(0.0843787383\) & \(-1.3127570715\) & \(-2.8536965275\) & \(3.0895990050\)\\
\(f_{C_5;A_1A_2}\) & \(-1.0521785948\) & \(0.3665325126\) & \(0.8301936585\) & \(-2.5881203505\) & \(-0.1427401480\) & \(2.7657528970\)\\
\(f_{C_5;A_1A_3}\) & \(0.7223668362\) & \(-1.1326872960\) & \(1.2963036463\) & \(-2.0800897615\) & \(-1.2879126310\) & \(2.9104449593\)\\
\(f_{C_5;A_2C_6}\) & \(-0.3016534858\) & \(1.0834821878\) & \(2.1442292429\) & \(-1.0815719701\) & \(-0.3992203478\) & \(2.4883372632\)\\
\(f_{C_5;A_3C_4}\) & \(1.0306416816\) & \(0.2869389171\) & \(-0.0283419354\) & \(-2.6130339320\) & \(-0.9206298151\) & \(2.7965809607\)\\
\(f_{C_5;C_4C_6}\) & \(-0.6137057368\) & \(0.9455290573\) & \(1.1222124027\) & \(-1.7243044467\) & \(-2.3984334087\) & \(3.2024567518\)\\
\(f_{C_6;A_1A_3}\) & \(0.5529297195\) & \(-1.0507349208\) & \(2.0470048684\) & \(0.9567090382\) & \(-1.3453738005\) & \(2.7065341170\)\\
\(f_{C_6;A_1C_7}\) & \(-1.1996010402\) & \(-0.1443998175\) & \(2.5222288809\) & \(1.5167056730\) & \(-1.1194345520\) & \(3.2210343533\)\\
\(f_{C_6;A_2A_3}\) & \(1.2356449846\) & \(1.4136872579\) & \(1.3217933096\) & \(0.1354685409\) & \(-1.9659840683\) & \(2.8558558834\)\\
\(f_{C_6;A_2C_5}\) & \(-0.9639099323\) & \(1.4664969605\) & \(2.2436660341\) & \(-0.2115176016\) & \(-1.3516755851\) & \(2.9975889559\)\\
\(f_{C_6;C_3C_4}\) & \(0.2193961534\) & \(-0.7988185244\) & \(0.7312013631\) & \(1.0606398152\) & \(-3.0720657148\) & \(3.2838157248\)\\
\(f_{C_6;C_3C_7}\) & \(0.6691489609\) & \(1.1612667342\) & \(1.3415890011\) & \(2.1276511039\) & \(-1.6964631524\) & \(3.1624433995\)\\
\(f_{C_6;C_4C_5}\) & \(-1.1149298647\) & \(0.1656000043\) & \(1.2589873399\) & \(-0.6511592257\) & \(-2.5559871144\) & \(2.9686056580\)\\
\(f_{C_7;A_1A_2}\) & \(-1.7102669574\) & \(0.8276902179\) & \(0.7888408942\) & \(1.2031382036\) & \(1.2914603947\) & \(2.5194772488\)\\
\(f_{C_7;A_1C_6}\) & \(-1.5563465195\) & \(0.3241378792\) & \(1.7083931921\) & \(2.2190867780\) & \(-0.4015062551\) & \(3.0873031198\)\\
\(f_{C_7;A_2A_3}\) & \(0.8595291322\) & \(1.8390603402\) & \(1.1988358740\) & \(1.3759534735\) & \(0.8715458952\) & \(2.6853269681\)\\
\(f_{C_7;A_3C_8}\) & \(0.5899337152\) & \(0.2783245165\) & \(2.4011212864\) & \(0.7690040325\) & \(1.2994938360\) & \(2.7332985912\)\\
\(f_{C_7;C_2C_3}\) & \(0.0668878578\) & \(-0.8399705895\) & \(0.3376813096\) & \(3.0405953039\) & \(1.1340619323\) & \(3.2179759962\)\\
\(f_{C_7;C_2C_8}\) & \(0.1686652769\) & \(-0.7506863196\) & \(1.1974083223\) & \(1.7263001253\) & \(2.4814626706\) & \(3.1880297559\)\\
\(f_{C_7;C_3C_6}\) & \(0.2823573877\) & \(1.2516329589\) & \(1.0111375989\) & \(2.7585984224\) & \(-0.5390087614\) & \(3.0933971137\)\\
\(f_{C_8;A_1A_3}\) & \(0.3986770570\) & \(-1.3736780717\) & \(2.0319039251\) & \(-0.7037196705\) & \(2.0865523158\) & \(3.1659896308\)\\
\(f_{C_8;A_1C_1}\) & \(-0.6217032321\) & \(-0.2235915763\) & \(1.4550931177\) & \(-2.0487403989\) & \(2.2310907786\) & \(3.2755010865\)\\
\(f_{C_8;A_3C_7}\) & \(0.6560085557\) & \(0.5863386863\) & \(2.0550932316\) & \(-0.1901861689\) & \(2.2751368511\) & \(3.0347927344\)\\
\(f_{C_8;C_1C_2}\) & \(-1.2520743088\) & \(0.5216221912\) & \(0.3702078053\) & \(-0.5054999742\) & \(2.4269994985\) & \(2.6688368990\)\\
\(f_{C_8;C_2C_7}\) & \(-0.7746559993\) & \(-0.9674041009\) & \(1.0421754060\) & \(0.8767458888\) & \(3.1147930684\) & \(3.4774576059\)\\
\end{longtable}

\endgroup

\bibliographystyle{amsplain}

\end{document}